\documentclass{article}

\usepackage[margin=1in]{geometry}
\usepackage[numbers]{natbib}

\usepackage[utf8]{inputenc} 
\usepackage[T1]{fontenc}    
\usepackage[colorlinks,linkcolor=blue,filecolor=blue,citecolor=black,urlcolor=blue]{hyperref}       
\usepackage{url}    

\usepackage{booktabs}      
\usepackage{amsfonts}       
\usepackage{nicefrac}       
\usepackage{microtype}      
\usepackage[table]{xcolor}         
\usepackage{tocvsec2}
\usepackage{titletoc}

\usepackage{xspace}

\usepackage{amsmath}
\usepackage{amsthm}
\usepackage{amssymb}

\usepackage{multicol}
\usepackage{multirow}
\usepackage{makecell}
\usepackage{enumitem}
\usepackage{wrapfig}
\usepackage{dsfont}

\usepackage{graphicx,epstopdf}
\usepackage{caption}
\usepackage{subcaption} 

\usepackage{algorithm}
\usepackage{algpseudocode}

\usepackage{threeparttable, booktabs}
\usepackage{mathtools}
\usepackage{float}

\usepackage{hyperref}[pdfusetitle]
\hypersetup{colorlinks,urlcolor=blue}
\usepackage{pifont}       % \ding{xx}
\usepackage{bbding}       % \Checkmark,\XSolid,... (需要和pifont宏包共同使用)

\usepackage{tabularray}

\usepackage[skins,listings]{tcolorbox}

\newtcblisting{tikzexample}{
    sidebyside,
    center lower,
    bicolor,
    colbacklower=white,
    sharp corners,
    frame engine=empty
}

\usepackage{quiver}

\restylefloat{algorithm} 
\newtheorem{assumption}{Assumption}

\newtheorem{remark}{Remark}
\newtheorem{lemma}{Lemma}
\newtheorem{example}{Example}
\newtheorem{proposition}{Proposition}
\newtheorem{corollary}{Corollary}

\usepackage{diagbox}

\allowdisplaybreaks

\newcommand{\E}{\mathbb{E}}

\newcommand{\norm}[1]{\left\|#1\right\|}

\newcommand{\sudamuon}{SUDA--Muon}
\newcommand{\G}{\mathbf{G}}

\title{SUDA-Muon: Structural Design Principles and Boundaries for Fully Decentralized Muon}

\author{
  Hengrui Zhang$^*$ \\
  \texttt{2022141210007@stu.scu.edu.cn} \\
  Sichuan University \\
  \and
  Boao Kong$^*$$^\dagger$\\
  \texttt{kongboao@stu.pku.edu.cn} \\
  Peking University \\
  \and
  Jiahe Geng \\
  \texttt{gengcai02@gmail.com} \\
  Peking University \\
  \and
  Zhengyang Huang\\
  \texttt{huangzy20040420@163.com} \\
  Beihang University \\
}

\begin{document}
\setlength{\parindent}{0pt}
\setlength{\parskip}{0.5em}
\maketitle

\def\thefootnote{$^*$}\footnotetext{Equal contribution.}
\def\thefootnote{$^\dagger$}\footnotetext{Corresponding author.}

\begin{abstract}
Fully decentralized Muon is difficult because its nonlinear matrix-sign operator does not commute with linear gossip averaging. This makes decentralized Muon a structural design problem: in designing the algorithm, one must distinguish modular components from non-modular ones. We propose \sudamuon{}, which realizes this separation through a unified primal--dual communication template called SUDA; within this template, ED/D$^2$, EXTRA, and gradient tracking become modular backbone choices. We prove a topology-separated non-asymptotic convergence guarantee in the nuclear-norm geometry: the dominant term scales as $\mathcal{O}((1+\sigma/\sqrt{N})K^{-1/4})$ and does not explicitly involve graph quantities, identifying the communication backbone as the modular axis in the structure design. We then establish two complementary non-modular boundaries. Internally, tracking-before-polarization is necessary for this natural no-tracking variant to avoid non-stationary fixed points under heterogeneous objectives. Externally, in the absence of a central server, a fully decentralized method cannot perform the federated average-then-polarize update; we show that this non-modular local-polarize-then-average design is the essential reason why can fail to exhibit linear speedup. Experiments on CIFAR-100 and GPT-2 fine-tuning support the same picture: the unified template makes different communication algorithms directly comparable. In mild near-IID regimes, the resulting variants perform similarly, while in the more difficult long-horizon non-IID CIFAR-100 setting, \sudamuon{} achieves higher accuracy and lower loss than \textsc{DeMuon}.
\end{abstract}

\noindent\textbf{MSC2020:} 90C15, 90C26, 90C35, 68W15

\noindent\textbf{Keywords:} decentralized optimization, matrix-aware optimizer, Muon, topology separation, convergence analysis

\newpage

{\small
\tableofcontents}

\allowdisplaybreaks

\newpage

\section{Introduction}
\label{sec:introduction}

Decentralized optimization studies how a network of agents can cooperatively minimize a global objective using only local computation and neighbor-to-neighbor communication, without relying on a parameter server or any other central coordinator \citep{yang2019survey,nedic2018multi,lian2017can}. This setup is attractive when data and compute are naturally spread across devices, organizations, or edge nodes, and when privacy, robustness, or communication bottlenecks make centralized aggregation undesirable \citep{blot2016gossip,daily2018gossipgrad,yuan2022decentralized}. A recurring theme in this literature is that statistical and network effects are inseparable: local objectives may be heterogeneous, stochastic gradients are noisy, and information propagates through repeated mixing over the communication graph, so the behavior of an algorithm depends not only on smoothness and noise but also on topology and on how consensus errors are corrected \citep{shi2015extra,tang2018d,pu2021distributed,alghunaim2022unified,ying2021exponential}. In this paper, we focus on the matrix-valued stochastic version of this problem, which forms the modeling backdrop for decentralized Muon.

Concretely, we consider the decentralized matrix optimization problem
\begin{equation}
    \label{eq:intro-decentralized-problem}
    \min_{X\in\mathbb{R}^{m\times n}} f(X) \;\triangleq\; \frac{1}{N}\sum_{i=1}^N f_i(X),
\end{equation}
where agent $i\in[N]\triangleq\{1,\dots,N\}$ only has access to its local objective $f_i$ and can query a stochastic first-order oracle. At iteration $k$, given a local iterate $X_i^k$, agent $i$ samples $\xi_i^k$ and obtains a stochastic gradient
\begin{equation}
    \label{eq:intro-local-oracle}
    G_i^k \;\triangleq\; \nabla F_i(X_i^k;\xi_i^k)\in\mathbb{R}^{m\times n},
\end{equation}
which satisfies standard unbiasedness and bounded-variance conditions. The agents are connected by an undirected communication graph encoded by a symmetric, doubly stochastic mixing matrix $W\in\mathbb{R}^{N\times N}$, and they can exchange messages only with their neighbors according to $W$.

Most existing decentralized optimization methods are built around classical vector-valued first-order updates, including decentralized SGD, correction-based variants such as EXTRA and D$^2$, gradient-tracking methods, and more recent decentralized adaptive gradient methods inspired by AdaGrad/Adam \citep{yang2019survey,lian2017can,shi2015extra,tang2018d,pu2021distributed,nazari2022dadam,chen2023convergence}. By contrast, decentralized algorithms for the recently emerging matrix optimizer \emph{Muon} \citep{jordan2024muon} remain much less explored. Muon replaces a coordinatewise step by a matrix polarization direction based on the matrix sign operator $\operatorname{msgn}$ and has shown strong empirical performance in centralized and federated settings \citep{jordan2024muon,liu2025muon,liu2025fedmuon,takezawa2025fedmuon}. The decentralized case is structurally more subtle because $\operatorname{msgn}$ is nonlinear, whereas decentralized communication is built from linear averaging. Once these two operations are combined, averaging and polarization need not agree, so the usual intuition from decentralized SGD no longer applies. As a result, decentralized Muon becomes a structural design problem: in designing the algorithm, one must distinguish modular components from non-modular ones.

Current decentralized Muon results do not yet provide a unified picture. DeMuon \citep{he2025demuon} combines Muon with gradient tracking and works over general graphs, but it is organized around one specific communication template. This leaves four questions open:
\begin{itemize}
    \item \textbf{Q1:} Can decentralized Muon be organized so that the communication backbone can vary while the Muon geometry stays fixed?
    \item \textbf{Q2:} Under such a decomposition, can one obtain a topology-separated rate in the Muon-induced nuclear-norm geometry, rather than an analysis in which network effects and optimizer geometry are entangled from the start?
    \item \textbf{Q3:} Is tracking before polarization genuinely necessary, or merely one convenient design choice?
    \item \textbf{Q4:} Compared with server-based/federated Muon, why can a fully decentralized Muon update not exploit the usual linear-speedup effect from averaging stochastic gradients?
\end{itemize}

To answer these questions, we introduce SUDA as a primal--dual communication template for decentralized Muon design. SUDA naturally contains existing communication frameworks such as ED/D$^2$, EXTRA, and gradient tracking, and can instantiate several standard heterogeneity-correction mechanisms while keeping the Muon update rule fixed \citep{alghunaim2022unified}. Based on this template, we propose the \sudamuon{} analysis framework. Our contributions are as follows.

\begin{itemize}
    \item 
    \textbf{C1:} We formulate decentralized Muon through a separation between matrix polarization and the decentralized backbone that mixes or tracks information before polarization. Concretely, \sudamuon{} realizes this separation through a primal--dual template parameterized by $(A,B,C)$, following the SUDA formulation, so that ED/D$^2$, EXTRA, and ATC-style gradient tracking appear as modular backbone choices while the Muon direction is kept fixed. This answers Q1.
    
    \item 
    \textbf{C2:} We prove a topology-separated non-asymptotic convergence guarantee in the Muon-induced nuclear-norm geometry. The full five-term bound is stated in Proposition~\ref{prop:sudamuon-final-convergence-K-1-4}. In particular, with suitable stepsize and EMA schedules, its dominant term scales as $\mathcal{O}((1+\sigma/\sqrt{N})K^{-1/4})$, while topology enters only through faster-decaying corrections involving the SUDA contraction rate $\gamma$ and the mixing rate $\lambda$. This identifies the communication backbone as the modular axis of the design: changing $(A,B,C)$ changes only the lower-order network corrections through $\gamma$. This answers Q2.
    
    \item 
    \textbf{C3:} We show that not every design choice is modular. As an internal structural boundary, tracking before polarization is necessary: via a smooth matrix-logistic counterexample, we prove that if each node applies $\operatorname{msgn}$ directly to its local EMA, the averaged iterate can remain fixed at a non-stationary point. This answers Q3.
    
    \item 
    \textbf{C4:} We give a second counterexample that isolates an external structural boundary between federated and fully decentralized Muon. In a simple noisy quadratic, server-side averaging before polarization yields the usual $\sigma/\sqrt{N}$ variance reduction, whereas local polarization before averaging produces an update whose contraction rate is independent of $N$, even under exact averaging on a complete graph. This answers Q4.
    
    \item 
    \textbf{C5:} We evaluate the framework on CIFAR-100 classification and GPT-2 fine-tuning. The experiments support a more specific qualitative picture: short-horizon ring benchmarks do not strongly separate decentralized Muon variants, but in the $20$-node, $100$-epoch non-IID CIFAR-100 setting both SUDA-based variants outperform \textsc{DeMuon}, with \textsc{SUDA--Muon-ED} performing best; under near-IID partitions, the gap between different SUDA--Muon instances becomes much smaller and the preferred topology changes.
    
\end{itemize}

\section{Related Work}
\label{sec:related-work}

\subsection{Muon and Other Matrix-Aware Optimizers in Centralized Training}

Recent centralized work on Muon has expanded quickly from the original optimizer proposal into a small but growing family of matrix-aware or LMO-based methods for deep learning. Unlike classical coordinatewise optimizers, and unlike matrix preconditioners such as Adafactor or Shampoo that still end with a vector-like first-order step, Muon-family methods operate directly at the level of each weight-matrix block and construct update directions through polarization, orthogonalization, or related norm-constrained LMO operations. Follow-up empirical papers study Muon's behavior in large-scale language-model training, while related variants such as Gluon and other norm-constrained LMO formulations aim to improve its practical stability and clarify its relation to a broader optimizer family \citep{shazeer2018adafactor,gupta2018shampoo,jordan2024muon,liu2025muon,riabinin2025gluon,pethick2025training}. In parallel, recent theory interprets Muon-type updates through spectral- or operator-norm geometry and analyzes their convergence behavior \citep{wang2025muon,shen2025convergence}. 

From an analysis viewpoint, however, Muon-type methods are substantially harder than standard SGD- or Adam-style optimizers for two related reasons. First, the search direction is generally not an unbiased estimator of the Euclidean gradient itself: after forming a gradient or momentum matrix, the algorithm applies a matrix polarization or orthogonalization map, so the final step is a biased transformed direction that is better understood through matrix geometry than through classical unbiased-gradient recursions. Second, this transformation is intrinsically nonlinear at the matrix level, which makes it difficult to transfer standard arguments based on linear averaging, coordinatewise decomposition, or diagonal preconditioning. Existing theory therefore has to work with operator- or spectral-norm geometry, LMO-style formulations, and more delicate progress measures than those used for conventional vector optimizers \citep{jordan2024muon,pethick2025training,riabinin2025gluon,shen2025convergence}.

\subsection{Decentralized Optimization and Heterogeneity Correction}

Classical decentralized optimization provides a large toolbox for coordinating first-order updates over graphs. Early diffusion and consensus-correction methods already showed that network mixing, local objective mismatch, and stochastic noise must be analyzed jointly rather than as separable effects \citep{chen2012diffusion,shi2015extra}. In modern decentralized learning, DSGD is the basic stochastic baseline, while Exact Diffusion/D$^2$ and gradient-tracking methods introduce correction terms to mitigate the bias caused by heterogeneity, local drift, or imperfect consensus \citep{lian2017can,tang2018d,pu2021distributed,yuan2023removing}. More recent unified analyses further characterize how topology, local updates, and communication schedules shape transient and asymptotic behavior across a common family of methods \citep{koloskova2020unified,alghunaim2022unified,zhu2024sparkle,kong2025decentralized}. SUDA is especially relevant because it places several of these methods inside one primal--dual template parameterized by $(A,B,C)$, making it possible to compare communication mechanisms without redesigning the algorithm each time \citep{alghunaim2022unified,zhu2024sparkle,kong2025decentralized}.

Most of this literature, however, works in a Euclidean setting: gradients are communicated linearly, and the optimizer itself is vector-based or at most diagonally preconditioned. The resulting theory says a great deal about topology and bias correction, but it does not address what happens when the optimizer applies a nonlinear matrix transformation before the update. For the present paper, these methods should therefore be viewed as candidate communication backbones rather than as competing optimizer geometries.

\subsection{Distributed Muon: Decentralized and Federated Extensions}

Distributed extensions of Muon are only beginning to emerge, and most of the current literature is on the federated side rather than on fully peer-to-peer graphs. In federated learning, the standard star-topology setup already separates local client optimization from server aggregation, and the interaction between data heterogeneity and averaging is known to be a central difficulty \citep{mcmahan2017communication,kairouz2021advances}. Recent FedMuon-style methods bring matrix-aware or LMO-based updates into this setting and show that naively combining local Muon with FedAvg can be unstable or biased under heterogeneity, which motivates additional server-side correction and bias control \citep{takezawa2025fedmuon,liu2025fedmuon}. These results are directly relevant because they show that Muon does not simply drop into a distributed pipeline as a black-box replacement for SGD or Adam.

DeMuon is, to our knowledge, the clearest fully decentralized Muon method on general graphs \citep{he2025demuon}. It demonstrates that combining Muon with gradient tracking is viable, but it is organized around one specific communication template. Our paper differs in emphasis along three axes. First, we separate Muon geometry from the communication backbone, rather than analyzing one fixed coupling from the outset. Second, we ask which backbone substitutions are modular inside a unified template, rather than studying only one admissible design. Third, we identify two complementary non-modular boundaries: an internal one, namely whether tracking must precede polarization, and an external one, namely whether federated Muon intuitions transfer to the fully decentralized setting.

\section{Problem Formulation and Algorithm}\label{sec:method}

\subsection{Problem Setup}

We consider the decentralized matrix optimization problem
\[
    \min_{X\in\mathbb{R}^{m\times n}}\; f(X)\;\triangleq\;\frac{1}{N}\sum_{i=1}^N f_i(X),
\]
where agent $i\in[N]\triangleq\{1,\dots,N\}$ can only access its local objective $f_i$ and query a stochastic first-order oracle.
At iteration $k$, given $X_i^k$, agent $i$ samples $\xi_i^k$ and obtains the stochastic gradient
\[
    G_i^k \;\triangleq\; \nabla F_i(X_i^k;\xi_i^k)\in\mathbb{R}^{m\times n}.
\]

\subsection{Algorithm Description}

Our design principle is to separate two roles that are often entangled in decentralized Muon variants. The first role is the \emph{decentralized backbone}: the algorithm should use communication and, when needed, tracking to construct the signal to be polarized. The second role is \emph{matrix geometry}: Muon's nonlinear operator $\operatorname{msgn}$ should act on that backbone-processed signal rather than on a purely local one. A unified primal--dual communication template is a natural way to realize this separation, because it lets the communication backbone vary while leaving the polarization rule unchanged.

In this paper, we instantiate that template using the stochastic unified decentralized algorithm (SUDA) formulation. SUDA is used only as a unified communication template. It is not the source of Muon's geometry. The role of the matrices $(A,B,C)$ is simply to parameterize how decentralized mixing and tracking are performed, while the Muon direction is always the matrix polarization $\operatorname{msgn}(\cdot)$. In the language of this paper, the unified template is used to study the modular axis of the design space; the two non-modular boundaries are established separately by structural counterexamples.

At a high level, each node maintains four coupled quantities: the local primal iterate $X_i^k$, an EMA state $M_i^k$ for stochastic gradients, a tracked signal $H_i^k$ that aggregates network-corrected gradient information, and a dual variable $Y_i^k$ that accumulates consensus violations in the SUDA backbone. Each iteration first updates the EMA, then updates the tracked signal through communication, next applies the matrix-sign operator to obtain the Muon direction, and finally performs the SUDA primal--dual correction step.

\begin{algorithm}[t]
\caption{\sudamuon{} (stacked form)}
\label{alg:suda-muon}
\begin{algorithmic}[1]
\State \textbf{Input:} stepsize $\alpha>0$, EMA parameter $\beta\in[0,1)$, mixing / SUDA matrices $(W,A,B,C)$.
\State \textbf{Initialize:} choose $\mathbf{X}^0=\mathrm{col}\{X_1^0,\ldots,X_N^0\}$, set $\mathbf{Y}^0=0$.
\State Query stochastic gradients $\mathbf{G}^0=\mathrm{col}\{G_1^0,\ldots,G_N^0\}$ at $\mathbf{X}^0$.
\State Set $\mathbf{M}^0:=\mathbf{G}^0$ and $\mathbf{H}^0:=\mathbf{M}^0$.
\For{$k=0,1,2,\ldots$}
    \If{$k\ge 1$}
        \State Query stochastic gradients $\mathbf{G}^k=\mathrm{col}\{G_1^k,\ldots,G_N^k\}$ at $\mathbf{X}^k$.
    \EndIf
    \State $\mathbf{M}^{k+1} \gets \beta\mathbf{M}^{k} + (1-\beta)\,\mathbf{G}^{k}$ \hfill (EMA)
    \State $\mathbf{H}^{k+1} \gets W\bigl(\mathbf{H}^k + \mathbf{M}^{k+1}-\mathbf{M}^{k}\bigr)$ \hfill (gradient tracking)
    \State $\mathbf{X}^{k+1} \gets A\bigl(C\mathbf{X}^k - \alpha\,\operatorname{msgn}(\mathbf{H}^{k+1})\bigr) - B\mathbf{Y}^k$ \hfill (SUDA backbone)
    \State $\mathbf{Y}^{k+1} \gets \mathbf{Y}^k + B\mathbf{X}^{k+1}$
\EndFor
\end{algorithmic}
\end{algorithm}

The framework is parameterized by the SUDA communication matrices $(A,B,C)$, which are low-degree polynomials of the mixing matrix $W$.
Different choices recover well-known decentralized schemes as special cases. In the language of this paper, these are \emph{modular backbone choices}: they change how communication correction is carried out, but not the Muon geometry itself.
\begin{itemize}
\item \textbf{ED/D$^2$:} $A=C=W$ and $B=(I-W^2)^{\frac{1}{2}}$.
\item \textbf{EXTRA:} $A=C=\frac{I+W}{2}$ and $B^2=(\frac{I-W}{2})^{\frac{1}{2}}$.
\item \textbf{Gradient Tracking (ATC-GT):} $A=C=W$ and $B=I-W$.
\end{itemize}
The point is not that these communication mechanisms are new. Rather, \sudamuon{} makes them comparable within a single decentralized-Muon architecture while keeping the polarization rule fixed.
\begin{remark}
DeMuon \citep{he2025demuon} uses a closely related gradient-tracking mechanism combined with Muon directions, but applies mixing directly to the primal variables rather than through the SUDA primal--dual backbone. Specifically, DeMuon does not maintain the dual variable $\mathbf{Y}^k$ and instead performs a gossip-style averaging step on $\mathbf{X}^k$ after the Muon update. This subtle structural difference means DeMuon is not recovered as a special case of Algorithm~\ref{alg:suda-muon}, but its gradient-tracking component shares the same motivations analyzed in our convergence theory.
\end{remark}
In all cases, the Muon polarization direction $\operatorname{msgn}(\mathbf{H}^{k+1})$ is applied \emph{after} gradient tracking, so that the $\operatorname{msgn}$ operator acts on a network-aggregated quantity rather than a purely local signal.
The stacked update equations are:
\begin{equation}
\label{eq:sudamuon-framework}
\begin{aligned}
\mathbf{X}^{k+1} &= A\bigl(C\mathbf{X}^k - \alpha\,\operatorname{msgn}(\mathbf{H}^{k+1})\bigr) - B\mathbf{Y}^k,\\
\mathbf{Y}^{k+1} &= \mathbf{Y}^k + B\mathbf{X}^{k+1},\\
\mathbf{M}^{k+1} &= \beta\mathbf{M}^{k} + (1-\beta)\,\mathbf{G}^{k},\\
\mathbf{H}^{k+1} &= W\bigl(\mathbf{H}^k + \mathbf{M}^{k+1}-\mathbf{M}^{k}\bigr),
\end{aligned}
\end{equation}

\subsection{Assumptions}

We adopt standard assumptions from refined SUDA analyses and recent Muon/DeMuon convergence analyses, adapted to matrix-valued parameters.

\begin{assumption}[Network and SUDA communication structure]
    \label{assump:network-suda}
    Let $W\in\mathbb{R}^{N\times N}$ be the (undirected) mixing matrix associated with the communication graph.
    We assume:
    \begin{enumerate}
        \vspace{-1mm}
        \item $W$ is symmetric, doubly stochastic, and primitive. Denote the mixing rate
        \[
            \lambda \;\triangleq\; \rho\Big(W-\tfrac{1}{N}\mathbf{1}_N\mathbf{1}_N^\top\Big)\in(0,1).
        \]
        \vspace{-1mm}
        \item The SUDA matrices $A,B,C\in\mathbb{R}^{N\times N}$ satisfy:
        (i) $A$ and $C$ are doubly stochastic; (ii) $BZ=0$ iff $Z_1=\cdots=Z_N$ for stacked variables;
        (iii) $A$, $B^2$, and $C$ are (low-degree) polynomial functions of $W$.
    \end{enumerate}
\end{assumption}

\begin{assumption}[Matrix smoothness and lower boundedness]
    \label{assump:smooth}
    Each local objective $f_i:\mathbb{R}^{m\times n}\to\mathbb{R}$ is $L_*$-smooth in the sense that for all $X,Y\in\mathbb{R}^{m\times n}$,
    \[
        \|\nabla f_i(X)-\nabla f_i(Y)\|_* \;\le\; L_* \|X-Y\|,
        \qquad \forall i\in[N],
    \]
    where $\|\cdot\|_*$ and $\|\cdot\|$ denote the nuclear norm and operator norm, respectively.
    Moreover, the global objective $f(X)\triangleq\frac{1}{N}\sum_{i=1}^N f_i(X)$ is bounded below: $f(X)\ge f_{\inf}>-\infty$.
\end{assumption}

\begin{assumption}[Stochastic first-order oracle]
    \label{assump:stoch}
    At each iteration $k$ and node $i$, the oracle returns
    $G_i^k=\nabla F_i(X_i^k;\xi_i^k)$ satisfying
    \[
        \mathbb{E}\big[G_i^k\mid\mathcal{F}^k\big]=\nabla f_i(X_i^k),\qquad
        \mathbb{E}\big[\|G_i^k-\nabla f_i(X_i^k)\|_F^2\mid\mathcal{F}^k\big]\le \sigma^2,
    \]
    where $\mathcal{F}^k$ is the natural filtration of the algorithm.
    Stochastic gradients are conditionally independent across agents and time given $\mathcal{F}^k$.
\end{assumption}

\begin{assumption}[Muon polarization operator]
    \label{assump:muon}
    For any $H\in\mathbb{R}^{m\times n}$ with a reduced SVD $H=U\Sigma V^\top$, define
    \begin{equation}
    \label{eq:msgn}
        \operatorname{msgn}(H)\triangleq UV^\top,\qquad \operatorname{msgn}(0)\triangleq 0.
    \end{equation}
    Each agent uses $S_i^{k+1}=\operatorname{msgn}(H_i^{k+1})$ as the Muon direction, computed locally
    (e.g., via an exact SVD or approximate Newton--Schulz iterations).
\end{assumption}

\section{Theoretical Analysis and Structural Consequences}\label{sec:theory}

We present the theoretical analysis of \sudamuon{}.
For the reader's convenience, we first state the main convergence result and its big-$\mathcal{O}$ simplification in Subsection~\ref{subsec:main-statement}, then establish the proof ingredients and final assembly in Subsection~\ref{subsec:technical-analysis}. Subsections~\ref{sec:discussion}--\ref{sec:linear-speedup} then draw three structural consequences of this theory: one modular axis, namely the choice of communication backbone, and two non-modular boundaries, namely the internal order between tracking and polarization and the external difference between decentralized and federated Muon.

%% ================================================================
\subsection{Statement of Main Results}
\label{subsec:main-statement}
%% ================================================================

The following proposition provides the explicit convergence rate for \sudamuon{}. Its role in the paper is not only to certify convergence, but also to expose the structural decomposition behind decentralized Muon: the leading stochastic term reflects the Muon geometry itself, whereas the network-dependent corrections isolate the effect of the communication backbone.

\begin{proposition}[Main convergence rate --- informal statement]
\label{prop:main-informal}
Under Assumptions~\ref{assump:network-suda}--\ref{assump:muon} and standard SUDA stability conditions, choose the stepsize $\alpha=\alpha_0 K^{-3/4}$ and EMA parameter $1-\beta=b_0 K^{-1/2}$.
Then the average nuclear-norm stationarity measure of \sudamuon{} satisfies
\begin{equation}
\label{eq:main-rate-preview}
\frac{1}{K}\sum_{k=0}^{K-1}\mathbb{E}\big[\|\nabla f(\bar X^k)\|_*\big]
\;=\;\mathcal{O}\!\left(
\underbrace{\Bigl(1+\frac{\sigma}{\sqrt{N}}\Bigr)K^{-1/4}}_{\text{topology-free}}
+\underbrace{\frac{K^{-3/4}}{(1-\gamma)(1-\lambda)}}_{\text{network-dependent}}
+\text{lower-order terms}
\right),
\end{equation}
where $\gamma\in(0,1)$ is the SUDA contraction rate and $\lambda=\rho(W-J)\in(0,1)$ is the mixing rate.
\end{proposition}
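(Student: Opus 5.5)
The plan is to run the standard Muon descent argument on the averaged iterate $\bar X^k=\frac1N\sum_i X_i^k$. Separately, we control three error sources: the momentum error, the tracking/consensus error of $\mathbf H$, and the SUDA consensus error of $\mathbf X$.

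\textbf{Step 1 (averaged dynamics).} Since $A$ and $C$ are doubly stochastic and $\mathbf 1^\top B=0$ (the range of $B$ is orthogonal to consensus by Assumption~\ref{assump:network-suda}(ii) and symmetry of $B$), averaging the $\mathbf X$-update gives
\[
\bar X^{k+1}=\bar X^k-\tfrac{\alpha}{N}\textstyle\sum_i \operatorname{msgn}(H_i^{k+1}).
\]
Similarly, $\bar H^{k+1}=\bar M^{k+1}$ by induction from $\mathbf H^0=\mathbf M^0$.

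\textbf{Step 2 (one-step descent).} $L_*$-smoothness in the operator/nuclear pairing gives
\[
f(\bar X^{k+1})\le f(\bar X^k)-\tfrac{\alpha}{N}\textstyle\sum_i\langle \nabla f(\bar X^k),\operatorname{msgn}(H_i^{k+1})\rangle+\tfrac{L_*\alpha^2}{2},
\]
because $\|\operatorname{msgn}(\cdot)\|\le1$. We then use the Muon inequality $\langle G,\operatorname{msgn}(H)\rangle\ge \|G\|_*-2\|G-H\|_*$, which yields
\[
f(\bar X^{k+1})\le f(\bar X^k)-\alpha\|\nabla f(\bar X^k)\|_*+\tfrac{2\alpha}{N}\textstyle\sum_i\|\nabla f(\bar X^k)-H_i^{k+1}\|_*+\tfrac{L_*\alpha^2}{2}.
\]
We split the error term as $\|\nabla f(\bar X^k)-\bar M^{k+1}\|_*+\|\bar H^{k+1}-H_i^{k+1}\|_*$.

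\textbf{Step 3 (three error recursions).}
\begin{enumerate}
\item \emph{Momentum error.} Write $\bar M^{k+1}-\nabla f(\bar X^k)$ as an EMA of noise plus drift. The drift is bounded by $L_*$ times the operator-norm movement of $\bar X$ (at most $\alpha$ per step) and by $L_*$ times the consensus error $\frac1N\sum_i\|X_i^k-\bar X^k\|$. The noise averages over $N$ independent agents, so converting Frobenius to nuclear norm with a $\sqrt{\min(m,n)}$ factor (absorbed into constants) gives
\[
\mathbb E\|\cdot\|_*\lesssim \beta^{k}\|\cdot\|+\tfrac{\beta\alpha L_*}{1-\beta}+\sqrt{1-\beta}\,\tfrac{\sigma}{\sqrt N}+L_*\cdot(\text{consensus}).
\]
\item \emph{Tracking error.} $\mathbf H^{k+1}-\mathbf 1\bar H^{k+1}$ contracts by $\lambda$ and is driven by $\|\mathbf M^{k+1}-\mathbf M^k\|=(1-\beta)\|\mathbf G^k-\mathbf M^k\|$. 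Summing the geometric series gives a bound of order $\frac{1-\beta}{1-\lambda}(\text{gradient heterogeneity}+\sigma)$. Since $\operatorname{msgn}$ outputs are bounded, heterogeneity only enters via bounded quantities and $\|\mathbf M^k\|$ at initialization.
\item \emph{SUDA consensus.} This is the main obstacle. I would use the known SUDA change of variables, the eigen-decomposition of the polynomial matrices $A,B,C$ in $W$, to rewrite the consensus-error/dual pair as a linear system with contraction $\gamma<1$ (the ``standard SUDA stability conditions''). That system is driven by the inconsistent parts of $\alpha\operatorname{msgn}(\mathbf H^{k+1})$. The drive is uniformly bounded in operator norm by $\alpha\sqrt N$ (its increments are also $\mathcal O(\alpha)$), giving consensus error $\mathcal O\bigl(\alpha/(1-\gamma)\bigr)$ with no heterogeneity-induced bias. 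The delicate part is avoiding a $1/(1-\gamma)^2$ factor through the dual fixed point; I would handle this by working with differences of the driving sequence, as in refined SUDA analyses.
\end{enumerate}

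\textbf{Step 4 (assembly).} Telescoping Step 2 over $k<K$ and dividing by $\alpha K$ gives
\[
\frac1K\sum_k\mathbb E\|\nabla f(\bar X^k)\|_*\lesssim \frac{\Delta}{\alpha K}+L_*\alpha+\frac{\alpha}{1-\beta}+\sqrt{1-\beta}\frac{\sigma}{\sqrt N}+\frac{1}{K(1-\beta)}+\frac{L_*\alpha}{(1-\gamma)}+\frac{1-\beta}{1-\lambda}\cdots
\]
Substituting $\alpha=\alpha_0K^{-3/4}$ and $1-\beta=b_0K^{-1/2}$ makes the first, third, fourth and fifth terms $\mathcal O\bigl((1+\sigma/\sqrt N)K^{-1/4}\bigr)$, independent of topology. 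The network terms acquire extra factors of $\alpha$ or $(1-\beta)$ times products, yielding $K^{-3/4}/((1-\gamma)(1-\lambda))$ plus faster-decaying remainders. The initialization transients of order $1/K$ are the lower-order terms.
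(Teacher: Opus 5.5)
Your plan is essentially the paper's proof. It uses the averaged recursion $\bar X^{k+1}=\bar X^k-\alpha\bar S^{k+1}$ and the descent step via $\langle G,\operatorname{msgn}(H)\rangle\ge\|G\|_*-2\|G-H\|_*$. It then splits the tracking mismatch into an averaged EMA error (noise $\sqrt{1-\beta}\,\sigma/\sqrt N$, drift $L_*\alpha/(1-\beta)$, consensus bias) plus a $\lambda$-contracting disagreement $\tilde{\mathbf H}$. It bounds the SUDA consensus error by $\mathcal O(\alpha/(1-\gamma))$ using bounded $\operatorname{msgn}$ inputs, and finishes with Ces\`aro averaging under the given schedules.

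Three refinements are needed. In the tracking item, the paper writes $\tilde{\mathbf M}^{k+1}-\tilde{\mathbf M}^k=(1-\beta)(\tilde{\mathbf\Delta}^k+\tilde{\mathbf E}^k)$ with the local EMA error $\mathbf E^k=\nabla\mathbf f(\mathbf X^k)-\mathbf M^k$. The factor $1-\beta$ then cancels the $1/(1-\beta)$ memory, so the drive is $L_*$ times the averaged primal increment; this, not a heterogeneity term, is the source of the $K^{-3/4}/((1-\gamma)(1-\lambda))$ correction. In the SUDA item, no fixed-point shift or differencing is needed: the joint $(\tilde{\mathbf X},\tilde{\mathbf s})$ system is similar to a $\gamma$-contraction and is driven directly by $\|\widetilde{\operatorname{msgn}(\mathbf H)}\|_F\le 2\sqrt{Nr_{\max}}$. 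Finally, symmetry of $B$ is not assumed, so $JB\mathbf Y^k=0$ should be derived from $\mathbf Y^k\in\mathrm{range}(B)$ and $\mathbf 1^\top B^2=0$.
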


The precise five-term bound, together with all constants, is given in Proposition~\ref{prop:sudamuon-final-convergence-K-1-4} and its big-$\mathcal{O}$ simplification in Corollary~\ref{cor:sudamuon-final-convergence-bigO}.
Three structural features of this result deserve emphasis:
\begin{enumerate}
\item \textbf{Topology separation.}
The leading term $(1+\sigma/\sqrt{N})K^{-1/4}$ is entirely free of any network quantity; the communication topology affects only the faster-decaying correction terms.
\item \textbf{Modular network dependence.}
Network effects enter only through $(1-\gamma)$ and $(1-\lambda)$, and different choices of $(A,B,C)$ alter only $\gamma$, so one can compare communication strategies by simply substituting the corresponding SUDA contraction rate (see Subsection~\ref{sec:discussion}).
\item \textbf{Muon-induced geometry.}
The stationarity measure is the nuclear norm $\|\nabla f\|_*$, which is the natural dual norm for Muon's operator-norm constraint set. This is fundamentally different from the Euclidean norm used in standard decentralized SGD analyses.
\end{enumerate}

The proof proceeds in three stages: we first derive a one-step descent inequality in the nuclear-norm geometry, then establish the auxiliary bounds controlling EMA error, gradient tracking, SUDA disagreement, and forcing terms, and finally assemble these ingredients via telescoping and the chosen stepsize/EMA schedules.

%% ================================================================
\subsection{Basic Properties of the Matrix Sign Operator}
\label{subsec:msgn-basic}
%% ================================================================

\begin{lemma}[Basic properties of $\operatorname{msgn}$ and a robust inner-product lower bound]
\label{lem:sudamuon-msgn-basic}
Let $H\in\mathbb{R}^{m\times n}$ and define $\operatorname{msgn}(H)$ as in~\eqref{eq:msgn}: if $H\neq 0$ has reduced SVD $H=U\Sigma V^{\top}$ with $U\in\mathbb{R}^{m\times r}$, $V\in\mathbb{R}^{n\times r}$, $\Sigma=\mathrm{diag}(\sigma_1,\dots,\sigma_r)$ and $\sigma_i>0$, then $\operatorname{msgn}(H):=UV^{\top}$, and set $\operatorname{msgn}(0):=0$.
Then the following hold.
\begin{enumerate}
\item \textbf{Alignment with the nuclear norm.} One has
\begin{equation}
\langle H,\operatorname{msgn}(H)\rangle = \|H\|_*.
\end{equation}
\item \textbf{Operator- and Frobenius-norm bounds.} One has
\begin{equation}
\|\operatorname{msgn}(H)\|\le 1,\qquad \|\operatorname{msgn}(H)\|_F = \sqrt{\mathrm{rank}(H)}\le \sqrt{r_{\max}},\qquad r_{\max}:=\min\{m,n\}.
\end{equation}
Moreover, if $H\neq 0$ then $\|\operatorname{msgn}(H)\|=1$.
\item \textbf{A key inequality (no Lipschitzness required).} For any $G,H\in\mathbb{R}^{m\times n}$,
\begin{equation}
\label{eq:msgn-robust-ip-lb}
\langle G,\operatorname{msgn}(H)\rangle \ge \|G\|_* - 2\|G-H\|_*.
\end{equation}
\end{enumerate}
The proof uses only the definition~\eqref{eq:msgn}, basic linear algebra, and the standard nuclear/operator norm duality inequality
\begin{equation}
\label{eq:nuc-op-duality}
|\langle A,B\rangle| \le \|A\|_*\,\|B\|,\qquad \forall A,B\in\mathbb{R}^{m\times n},
\end{equation}
where $\langle A,B\rangle:=\mathrm{tr}(A^{\top}B)$.
\end{lemma}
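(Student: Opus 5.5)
We prove the three items in order, working directly from the reduced SVD $H=U\Sigma V^{\top}$ and the duality inequality~\eqref{eq:nuc-op-duality}. The case $H=0$ is handled separately throughout: then $\operatorname{msgn}(H)=0$, both sides of item~1 vanish, the norms in item~2 are zero, and item~3 reads $0\ge \|G\|_*-2\|G\|_*=-\|G\|_*$, which is trivial. So assume $H\neq 0$ with rank $r\ge 1$.

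\textbf{Item 1.} By cyclicity of the trace,
\[
\langle H,\operatorname{msgn}(H)\rangle=\mathrm{tr}(V\Sigma U^{\top}UV^{\top})=\mathrm{tr}(V\Sigma V^{\top})=\mathrm{tr}(\Sigma V^{\top}V)=\mathrm{tr}(\Sigma).
\]
Here we use $U^{\top}U=V^{\top}V=I_r$. Since $\mathrm{tr}(\Sigma)=\sum_i\sigma_i=\|H\|_*$, the claim follows.

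\textbf{Item 2.} The factorization $UV^{\top}=U\,I_r\,V^{\top}$ is itself an SVD of $\operatorname{msgn}(H)$. It has exactly $r$ singular values, all equal to $1$. Hence $\|\operatorname{msgn}(H)\|=1$ when $H\neq0$, and $\|\operatorname{msgn}(H)\|_F^2=\mathrm{tr}(VU^{\top}UV^{\top})=\mathrm{tr}(V^{\top}V)=r$. Finally, $r=\mathrm{rank}(H)\le\min\{m,n\}=r_{\max}$.

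\textbf{Item 3.} This is the only step needing an idea. The goal is to avoid any continuity of $\operatorname{msgn}$ and to pass through $H$ twice. Write $S=\operatorname{msgn}(H)$ and split
\[
\langle G,S\rangle=\langle H,S\rangle+\langle G-H,S\rangle .
\]
\begin{itemize}
\item By item~1, the first term equals $\|H\|_*$.
\item By~\eqref{eq:nuc-op-duality} together with $\|S\|\le1$ from item~2, the second term is at least $-\|G-H\|_*$.
\item The triangle inequality for the nuclear norm gives $\|H\|_*\ge\|G\|_*-\|G-H\|_*$.
\end{itemize}
Combining the three bullets yields $\langle G,S\rangle\ge\|G\|_*-2\|G-H\|_*$. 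This argument is insensitive to whether $H$ has repeated or small singular values, which is why no Lipschitzness of $\operatorname{msgn}$ is needed.

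The main potential obstacle is only the bookkeeping of the degenerate case $H=0$ and the fact that the reduced SVD is not unique. The latter is harmless: the polar factor $UV^{\top}$ is uniquely determined by $H$ (it equals $H(H^{\top}H)^{+1/2}$). In any case, all the identities above hold for any choice of reduced SVD.
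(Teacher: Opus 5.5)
Your proposal is correct and follows the paper's proof essentially step for step: the same trace computation for item 1, the same argument for item 2 (you read the unit singular values off the SVD $U I_r V^{\top}$, while the paper notes that $M^{\top}M=VV^{\top}$ is an orthogonal projector), and the identical split $\langle H,S\rangle+\langle G-H,S\rangle$ combined with duality and the triangle inequality for item 3. Your remark that $UV^{\top}=H(H^{\top}H)^{+1/2}$ does not depend on which reduced SVD is chosen is a small point of rigor that the paper omits.
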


\begin{proof}
If $H=0$, then $\operatorname{msgn}(H)=0$ by definition and all claims are immediate. Hence assume $H\neq 0$ and write its reduced SVD as $H=U\Sigma V^{\top}$ with $U^{\top}U=I_r$ and $V^{\top}V=I_r$.

\medskip\noindent\textbf{Part (1): $\langle H,\operatorname{msgn}(H)\rangle=\|H\|_*$.}
Using cyclicity of the trace and $U^{\top}U=I_r$, $V^{\top}V=I_r$,
\[
\langle H,\operatorname{msgn}(H)\rangle
=\mathrm{tr}\bigl((U\Sigma V^{\top})^{\top}(UV^{\top})\bigr)
=\mathrm{tr}\bigl(V\Sigma U^{\top}U V^{\top}\bigr)
=\mathrm{tr}(V\Sigma V^{\top})
=\mathrm{tr}(\Sigma)
=\sum_{i=1}^r \sigma_i
=\|H\|_*.
\]

\medskip\noindent\textbf{Part (2): operator- and Frobenius-norm bounds.}
Set $M:=\operatorname{msgn}(H)=UV^{\top}$. Then
\[
M^{\top}M = VU^{\top}UV^{\top} = VV^{\top},
\]
which is the orthogonal projector onto $\mathrm{col}(V)$ and hence has eigenvalues in $\{0,1\}$. Therefore
\[
\|M\|^2 = \lambda_{\max}(M^{\top}M) = \lambda_{\max}(VV^{\top}) = 1,
\]
so $\|\operatorname{msgn}(H)\|=1$ for $H\neq 0$, and the bound $\|\operatorname{msgn}(H)\|\le 1$ holds in general (including $H=0$).
Moreover,
\[
\|M\|_F^2 = \mathrm{tr}(M^{\top}M) = \mathrm{tr}(VV^{\top}) = \mathrm{rank}(VV^{\top}) = r = \mathrm{rank}(H),
\]
so $\|\operatorname{msgn}(H)\|_F=\sqrt{\mathrm{rank}(H)}\le \sqrt{\min\{m,n\}}=: \sqrt{r_{\max}}$.

\medskip\noindent\textbf{Part (3): robust inner-product lower bound.}
Write
\[
\langle G,\operatorname{msgn}(H)\rangle
= \langle H,\operatorname{msgn}(H)\rangle + \langle G-H,\operatorname{msgn}(H)\rangle.
\]
By Part~(1), $\langle H,\operatorname{msgn}(H)\rangle=\|H\|_*$. For the second term, apply the duality inequality~\eqref{eq:nuc-op-duality} with $A=G-H$ and $B=\operatorname{msgn}(H)$, together with Part~(2) ($\|\operatorname{msgn}(H)\|\le 1$), to obtain
\[
\langle G-H,\operatorname{msgn}(H)\rangle \ge -|\langle G-H,\operatorname{msgn}(H)\rangle|
\ge -\|G-H\|_*\,\|\operatorname{msgn}(H)\|
\ge -\|G-H\|_*.
\]
Consequently,
\begin{equation}
\label{eq:ip-lb-step1}
\langle G,\operatorname{msgn}(H)\rangle \ge \|H\|_* - \|G-H\|_*.
\end{equation}
Finally, by the triangle inequality for the nuclear norm,
\[
\|G\|_* = \|H + (G-H)\|_* \le \|H\|_* + \|G-H\|_*,
\]
which rearranges to $\|H\|_* \ge \|G\|_* - \|G-H\|_*$. Plugging this into~\eqref{eq:ip-lb-step1} yields
\[
\langle G,\operatorname{msgn}(H)\rangle \ge (\|G\|_* - \|G-H\|_*) - \|G-H\|_* = \|G\|_* - 2\|G-H\|_*,
\]
which is~\eqref{eq:msgn-robust-ip-lb}.
\end{proof}

%% ================================================================
\subsection{Proof Ingredients and Main Proofs}
\label{subsec:technical-analysis}
%% ================================================================

This subsection develops the technical ingredients behind the master rate and then assembles them into the final convergence bound.

\subsubsection*{One-Step Descent Analysis}

\begin{lemma}[Averaged SUDA--Muon dynamics]
\label{lem:sudamuon-average-dynamics}
Suppose Assumption~\ref{assump:network-suda} holds and the iterates are generated by~\eqref{eq:sudamuon-framework} with initialization $Y_i^0=0$ for all $i\in[N]$.
Define the averages
\[
    \bar X^k \;\triangleq\; \frac{1}{N}\sum_{i=1}^N X_i^k,
    \qquad
    \bar S^{k+1} \;\triangleq\; \frac{1}{N}\sum_{i=1}^N S_i^{k+1},\quad S_i^{k+1}=\operatorname{msgn}(H_i^{k+1}).
\]
Then, for all $k\ge 0$,
\begin{equation}
\label{eq:avg-dynamics-barX}
    \bar X^{k+1} \,=\, \bar X^k - \alpha\,\bar S^{k+1}.
\end{equation}
\end{lemma}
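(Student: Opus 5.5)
The plan is to left-multiply the primal and dual updates in~\eqref{eq:sudamuon-framework} by the averaging row vector $\frac{1}{N}\mathbf{1}_N^\top$. This operator commutes with the stacking convention, since the matrices act on the node index and trivially on the $m\times n$ blocks. The argument then rests on two structural facts.

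\textbf{Step 1: the average passes through $A$ and $C$.} By Assumption~\ref{assump:network-suda}(2)(i), $A$ and $C$ are doubly stochastic, so $\mathbf{1}_N^\top A=\mathbf{1}_N^\top$ and $\mathbf{1}_N^\top C=\mathbf{1}_N^\top$. Applying $\frac{1}{N}\mathbf{1}_N^\top$ to the $X$-update therefore gives
\[
\bar X^{k+1}=\bar X^k-\alpha\,\bar S^{k+1}-\tfrac{1}{N}\mathbf{1}_N^\top B\mathbf{Y}^k .
\]
It remains to show that the last term vanishes.

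\textbf{Step 2: the dual term has zero average.} I expect this to be the only real obstacle. The assumptions give the null space of $B$, namely $B\mathbf{Z}=0$ iff $\mathbf{Z}$ is consensual, so $B\mathbf{1}_N=0$. To turn this into $\mathbf{1}_N^\top B=0$, I need $B$ to be symmetric. Assumption~\ref{assump:network-suda}(2)(iii) states that $B^2$ is a polynomial in the symmetric matrix $W$; I will take $B$ to be the symmetric square root of $B^2$, which is the case in all listed instances (ED/D$^2$, EXTRA, ATC-GT). Then $B$ is a function of $W$, hence symmetric, and $\mathbf{1}_N^\top B=(B\mathbf{1}_N)^\top=0$. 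If one prefers not to assume symmetry of $B$, there is an alternative route: show by induction that $\mathbf{Y}^k\in\mathrm{range}(B)$. This holds because $\mathbf{Y}^0=0$ and $\mathbf{Y}^{k+1}=\mathbf{Y}^k+B\mathbf{X}^{k+1}$. It then suffices to have $\mathbf{1}_N^\top B^2=0$, which follows since $B^2$ is a polynomial in the symmetric matrix $W$ that annihilates $\mathbf{1}_N$. I will state the symmetric choice explicitly and mention this alternative.

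\textbf{Step 3: conclude.} With $\mathbf{1}_N^\top B\mathbf{Y}^k=0$ for every $k$, the dual term drops out, which yields~\eqref{eq:avg-dynamics-barX}. As a byproduct, $\mathbf{1}_N^\top\mathbf{Y}^{k}=0$ for all $k$ by induction from $\mathbf{Y}^0=0$. The initialization $\mathbf{Y}^0=0$ is used only in the range-based variant of Step 2.

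Note that the nonlinearity of $\operatorname{msgn}$ plays no role in this argument: $\bar S^{k+1}$ is simply defined as the average of the local directions, so no commutation of $\operatorname{msgn}$ with averaging is needed.
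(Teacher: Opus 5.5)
Your range-based variant of Step~2 is correct under the stated hypotheses, and it is essentially the paper's proof in telescoped form. The paper tracks $u^k=\tfrac1N\mathbf 1^\top B\mathbf Y^k$ and shows that $u^{k+1}-u^k=\tfrac1N\mathbf 1^\top B^2\mathbf X^{k+1}=0$. It obtains $\mathbf 1^\top B^2=0$ from $B^2=p(W)$: since $B^2\mathbf 1=0$ forces $p(1)=0$, and $\mathbf 1^\top W=\mathbf 1^\top$, it follows that $\mathbf 1^\top p(W)=p(1)\mathbf 1^\top=0$. Your appeal to the symmetry of $p(W)$ works equally well.

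You should, however, make that variant the main argument rather than the fallback. Assumption~\ref{assump:network-suda}(2)(iii) constrains only $B^2$, so $B$ is not yours to choose and need not be symmetric. For example, write $W=U\,\mathrm{diag}(1,\hat\lambda_1,\dots,\hat\lambda_{N-1})U^\top$ with $U=[\mathbf 1/\sqrt N,\hat u_1,\dots,\hat u_{N-1}]$, and assume $\hat\lambda_1$ is a simple eigenvalue. Take $B=\rho\,\tfrac{1}{\sqrt N}\mathbf 1\hat u_1^\top+\sum_{j\ge 2}\hat u_j\hat u_j^\top$ with $\rho\neq 0$. Then:
\begin{itemize}
\item $\ker B=\mathrm{span}(\mathbf 1)$;
\item $B^2=\sum_{j\ge2}\hat u_j\hat u_j^\top$, which is a polynomial in $W$ by interpolation;
\item yet $\mathbf 1^\top B=\rho\sqrt N\,\hat u_1^\top\neq 0$.
\end{itemize}
For such a $B$, your byproduct $\mathbf 1^\top\mathbf Y^k=0$ fails, since already $\mathbf 1^\top\mathbf Y^1=\rho\sqrt N\,\hat u_1^\top\mathbf X^1$. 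The initialization $\mathbf Y^0=0$ is then genuinely needed, which is why the lemma assumes it. The identity $\mathbf 1^\top B\mathbf Y^k=\mathbf 1^\top B^2\sum_{t=1}^k\mathbf X^t=0$ is what carries the proof in general.
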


\begin{proof}
Let $\mathbf{1}\in\mathbb{R}^N$ denote the all-ones vector and set
\[
    J \;\triangleq\; \frac{1}{N}\,\mathbf{1}\mathbf{1}^\top \in\mathbb{R}^{N\times N}.
\]
For a stacked variable $\mathbf{Z}=\mathrm{col}\{Z_1,\dots,Z_N\}$ (with matrix blocks $Z_i\in\mathbb{R}^{m\times n}$),
$J\mathbf{Z}$ has all blocks equal to the average $\bar Z\triangleq\frac{1}{N}\sum_i Z_i$.

From~\eqref{eq:sudamuon-framework},
\[
    \mathbf{X}^{k+1} \,=\, A\Bigl(C\mathbf{X}^{k} - \alpha\,\operatorname{msgn}(\mathbf{H}^{k+1})\Bigr) - B\mathbf{Y}^{k}.
\]
Left-multiplying by $J$ and using that $A$ and $C$ are doubly stochastic (Assumption~\ref{assump:network-suda}(2)(i)), hence $JA=J$ and $JC=J$, yields
\begin{equation}
\label{eq:avg-dynamics-JX}
    J\mathbf{X}^{k+1} \,=\, J\mathbf{X}^{k} - \alpha\,J\operatorname{msgn}(\mathbf{H}^{k+1}) - JB\mathbf{Y}^{k}.
\end{equation}
We claim that $JB\mathbf{Y}^k=0$ for all $k\ge 0$ under $\mathbf Y^0=0$.
Indeed, define the row vector $b^\top\triangleq \tfrac{1}{N}\mathbf 1^\top B$ and the stacked linear functional
$u^k\triangleq b^\top \mathbf Y^k\in\mathbb{R}^{m\times n}$.
From the dual update $\mathbf Y^{k+1}=\mathbf Y^k + B\mathbf X^{k+1}$,
\[
    u^{k+1}=u^k + b^\top B\mathbf X^{k+1} = u^k + \tfrac{1}{N}\mathbf 1^\top B^2\mathbf X^{k+1}.
\]
By Assumption~\ref{assump:network-suda}(2)(ii), for every \emph{consensus} stacked variable
$\mathbf Z=\mathrm{col}\{Z,\dots,Z\}$ (with an arbitrary block $Z\in\mathbb R^{m\times n}$), we have $B\mathbf Z=0$.
Writing the $i$-th block of $B\mathbf Z$ as $(B\mathbf Z)_i=\sum_{j=1}^N b_{ij}Z$, we obtain
$(\sum_{j=1}^N b_{ij})Z=0$ for every $Z$.
Choosing any nonzero $Z$ (e.g., a basis matrix) implies $\sum_{j=1}^N b_{ij}=0$ for all $i$, i.e.,
\begin{equation}
\label{eq:avg-dynamics-B1}
B\mathbf 1=0.
\end{equation}
Consequently, $B^2\mathbf 1=0$.

Next, by Assumption~\ref{assump:network-suda}(2)(iii), there exists a polynomial $p$ such that $B^2=p(W)$.
Since $W$ is doubly stochastic, $W\mathbf 1=\mathbf 1$ and $\mathbf 1^\top W=\mathbf 1^\top$, hence
$p(W)\mathbf 1=p(1)\mathbf 1$ and $\mathbf 1^\top p(W)=p(1)\mathbf 1^\top$.
From $B^2\mathbf 1=p(W)\mathbf 1=0$ we conclude $p(1)=0$, and therefore
\begin{equation}
\label{eq:avg-dynamics-1TB2}
\mathbf 1^\top B^2=\mathbf 1^\top p(W)=p(1)\mathbf 1^\top=0.
\end{equation}
Therefore $b^\top B=\tfrac{1}{N}\mathbf 1^\top B^2=0$, and thus $u^{k+1}=u^k$.
Since $\mathbf Y^0=0$, we get $u^0=0$ and hence $u^k\equiv 0$ for all $k$.
Finally,
\[
JB\mathbf Y^k=\frac{1}{N}\mathbf 1\,(\mathbf 1^\top B\mathbf Y^k)=\mathbf 1\,u^k=0.
\]
Substituting $JB\mathbf Y^k=0$ into~\eqref{eq:avg-dynamics-JX} and taking the common block yields
$\bar X^{k+1}=\bar X^k-\alpha\bar S^{k+1}$, which is~\eqref{eq:avg-dynamics-barX}.
\end{proof}

\begin{lemma}[One-step descent of $f(\bar X^k)$ with Muon direction and tracking error]
\label{lem:sudamuon-one-step-descent}
Suppose Assumptions~\ref{assump:network-suda} and~\ref{assump:smooth} hold, the iterates are generated by~\eqref{eq:sudamuon-framework}, and $Y_i^0=0$ for all $i\in[N]$ (as in Algorithm~\ref{alg:suda-muon}).
Let $(\bar X^k)_{k\ge 0}$ be the averaged iterates defined in Lemma~\ref{lem:sudamuon-average-dynamics}.
Then, for all $k\ge 0$,
\begin{equation}
\label{eq:one-step-descent}
 f(\bar X^{k+1}) \,\le\, f(\bar X^k)
 - \alpha\,\|\nabla f(\bar X^k)\|_* + \frac{L_*\alpha^2}{2}
 + \frac{2\alpha}{N}\sum_{i=1}^N \|\nabla f(\bar X^k) - H_i^{k+1}\|_*.
\end{equation}
Here and throughout, $\langle A,B\rangle\!:=\!\mathrm{tr}(A^{\top}B)$ denotes the matrix inner product.
\end{lemma}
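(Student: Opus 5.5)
We will combine three ingredients: a descent lemma for $L_*$-smooth functions in the operator/nuclear pairing, the averaged dynamics of Lemma~\ref{lem:sudamuon-average-dynamics}, and the robust inner-product bound~\eqref{eq:msgn-robust-ip-lb} of Lemma~\ref{lem:sudamuon-msgn-basic}.

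\textbf{Step 1: descent lemma for $f$.} First, $f=\frac1N\sum_i f_i$ inherits the smoothness of Assumption~\ref{assump:smooth}. By the triangle inequality,
\[
\|\nabla f(X)-\nabla f(Y)\|_*\le \frac1N\sum_i\|\nabla f_i(X)-\nabla f_i(Y)\|_*\le L_*\|X-Y\|.
\]
Next, write $f(Y)-f(X)-\langle\nabla f(X),Y-X\rangle=\int_0^1\langle \nabla f(X+t(Y-X))-\nabla f(X),Y-X\rangle\,dt$. Bounding the integrand by the duality inequality~\eqref{eq:nuc-op-duality} gives
\[
f(Y)\le f(X)+\langle\nabla f(X),Y-X\rangle+\frac{L_*}{2}\|Y-X\|^2 .
\]

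\textbf{Step 2: apply it along the averaged iterates.} Lemma~\ref{lem:sudamuon-average-dynamics} gives $\bar X^{k+1}-\bar X^k=-\alpha\bar S^{k+1}$, which is where $Y^0=0$ and Assumption~\ref{assump:network-suda} enter. The operator norm is convex and $\|S_i^{k+1}\|\le1$ by Lemma~\ref{lem:sudamuon-msgn-basic}(2), so $\|\bar S^{k+1}\|\le 1$ and the quadratic term is at most $L_*\alpha^2/2$. Hence
\[
f(\bar X^{k+1})\le f(\bar X^k)-\frac{\alpha}{N}\sum_i\langle\nabla f(\bar X^k),\operatorname{msgn}(H_i^{k+1})\rangle+\frac{L_*\alpha^2}{2}.
\]

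\textbf{Step 3: lower bound the inner products.} Split $\langle\nabla f(\bar X^k),\bar S^{k+1}\rangle$ node by node. For each $i$, apply~\eqref{eq:msgn-robust-ip-lb} with $G=\nabla f(\bar X^k)$ and $H=H_i^{k+1}$:
\[
\langle\nabla f(\bar X^k),\operatorname{msgn}(H_i^{k+1})\rangle\ge\|\nabla f(\bar X^k)\|_*-2\|\nabla f(\bar X^k)-H_i^{k+1}\|_*.
\]
Averaging over $i$ and inserting this into Step 2 yields~\eqref{eq:one-step-descent}.

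\textbf{Main obstacle.} The only delicate point is the integral form of the descent lemma under this nonstandard primal/dual pairing, i.e.\ gradient Lipschitz from the operator norm to the nuclear norm. It requires only the duality inequality and the linearity of $t\mapsto X+t(Y-X)$. It also requires $f$ to be differentiable, which is implicit in Assumption~\ref{assump:smooth}. Everything else follows directly from results already established.
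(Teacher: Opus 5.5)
Your proposal is correct and follows the paper's proof essentially step for step. Like the paper, you derive the operator/nuclear-norm descent lemma from the fundamental theorem of calculus and the duality inequality, apply it to the averaged recursion using $\|\bar S^{k+1}\|\le 1$, and then average the robust inner-product bound~\eqref{eq:msgn-robust-ip-lb} over the nodes.
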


\begin{proof}
\medskip\noindent\textbf{Step 1: a descent lemma under Assumption~\ref{assump:smooth}.}
Since $f(X)=\frac{1}{N}\sum_{i=1}^N f_i(X)$, Assumption~\ref{assump:smooth} implies that for all $X,Y$,
\begin{equation}
\label{eq:f-smoothness}
\|\nabla f(X)-\nabla f(Y)\|_* 
\;=\;\Big\|\frac{1}{N}\sum_{i=1}^N\bigl(\nabla f_i(X)-\nabla f_i(Y)\bigr)\Big\|_*
\;\le\;\frac{1}{N}\sum_{i=1}^N \|\nabla f_i(X)-\nabla f_i(Y)\|_*
\;\le\; L_*\|X-Y\|.
\end{equation}
Fix $X,Y\in\mathbb{R}^{m\times n}$ and let $D:=Y-X$.
Define $g(t):=f(X+tD)$ for $t\in[0,1]$. Then $g$ is differentiable and
\[ 
 g'(t)=\langle \nabla f(X+tD), D\rangle.
\]
Using the fundamental theorem of calculus,
\begin{equation}
\label{eq:FTC}
 f(Y)-f(X) 
 = g(1)-g(0)
 = \int_0^1 \langle \nabla f(X+tD), D\rangle\,dt.
\end{equation}
Add and subtract $\nabla f(X)$ inside the integrand to obtain
\[
\begin{aligned}
 f(Y)-f(X)
 &= \langle \nabla f(X),D\rangle + \int_0^1 \langle \nabla f(X+tD)-\nabla f(X), D\rangle\,dt.
\end{aligned}
\]
By the nuclear/operator duality inequality $|\langle A,B\rangle|\le \|A\|_*\,\|B\|$ and~\eqref{eq:f-smoothness}, for each $t\in[0,1]$,
\[
\begin{aligned}
\langle \nabla f(X+tD)-\nabla f(X), D\rangle
&\le |\langle \nabla f(X+tD)-\nabla f(X), D\rangle| \\
&\le \|\nabla f(X+tD)-\nabla f(X)\|_*\,\|D\| \\
&\le L_*\,\|tD\|\,\|D\| 
= L_*\,t\,\|D\|^2.
\end{aligned}
\]
Plugging this into~\eqref{eq:FTC} gives the descent inequality
\begin{equation}
\label{eq:descent-lemma-op}
 f(Y)\le f(X)+\langle \nabla f(X),Y-X\rangle + \frac{L_*}{2}\|Y-X\|^2.
\end{equation}

\medskip\noindent\textbf{Step 2: apply~\eqref{eq:descent-lemma-op} to the averaged recursion.}
By Lemma~\ref{lem:sudamuon-average-dynamics},
$\bar X^{k+1}=\bar X^k-\alpha\bar S^{k+1}$ with $\bar S^{k+1}=\frac{1}{N}\sum_{i=1}^N S_i^{k+1}$ and $S_i^{k+1}=\operatorname{msgn}(H_i^{k+1})$.
Applying~\eqref{eq:descent-lemma-op} with $X=\bar X^k$ and $Y=\bar X^{k+1}$ yields
\begin{equation}
\label{eq:descent-step1}
 f(\bar X^{k+1})
 \le f(\bar X^k) - \alpha\,\langle \nabla f(\bar X^k),\bar S^{k+1}\rangle
 + \frac{L_*\alpha^2}{2}\,\|\bar S^{k+1}\|^2.
\end{equation}
Moreover, by Lemma~\ref{lem:sudamuon-msgn-basic}(2), $\|S_i^{k+1}\|\le 1$ for all $i$, and by convexity of the operator norm,
\begin{equation}
\label{eq:barS-op-bound}
 \|\bar S^{k+1}\|
 = \Big\|\frac{1}{N}\sum_{i=1}^N S_i^{k+1}\Big\|
 \le \frac{1}{N}\sum_{i=1}^N \|S_i^{k+1}\|
 \le 1.
\end{equation}
Combining~\eqref{eq:descent-step1} and~\eqref{eq:barS-op-bound} gives
\begin{equation}
\label{eq:descent-step2}
 f(\bar X^{k+1})
 \le f(\bar X^k) - \alpha\,\langle \nabla f(\bar X^k),\bar S^{k+1}\rangle
 + \frac{L_*\alpha^2}{2}.
\end{equation}

\medskip\noindent\textbf{Step 3: lower bound $\langle \nabla f(\bar X^k),\bar S^{k+1}\rangle$ via Lemma~\ref{lem:sudamuon-msgn-basic}.}
Let $G:=\nabla f(\bar X^k)$. For each agent $i$, Lemma~\ref{lem:sudamuon-msgn-basic}(3) with $H=H_i^{k+1}$ gives
\[
 \langle G, S_i^{k+1}\rangle
 = \langle G,\operatorname{msgn}(H_i^{k+1})\rangle
 \ge \|G\|_* - 2\,\|G-H_i^{k+1}\|_*.
\]
Averaging over $i$ and using $\langle G,\bar S^{k+1}\rangle=\frac{1}{N}\sum_{i=1}^N \langle G,S_i^{k+1}\rangle$ yields
\begin{equation}
\label{eq:ip-lb-average}
 \langle \nabla f(\bar X^k),\bar S^{k+1}\rangle
 \ge \|\nabla f(\bar X^k)\|_* - \frac{2}{N}\sum_{i=1}^N \|\nabla f(\bar X^k)-H_i^{k+1}\|_*.
\end{equation}
Substituting~\eqref{eq:ip-lb-average} into~\eqref{eq:descent-step2} gives~\eqref{eq:one-step-descent}.
\end{proof}

\begin{proposition}[Descent inequality with an explicit ``tracking-error'' remainder]
\label{prop:telescoping-descent}
Suppose the assumptions of Lemma~\ref{lem:sudamuon-one-step-descent} hold.
Then for any integer $K\ge 1$,
\begin{equation}
\label{eq:telescoping-descent}
\frac{1}{K}\sum_{k=0}^{K-1}\mathbb{E}\big[\|\nabla f(\bar X^k)\|_*\big]
\;\le\;
\frac{\Delta_0}{\alpha K}
+\frac{L_*\alpha}{2}
+\frac{2}{NK}\sum_{k=0}^{K-1}\sum_{i=1}^N\mathbb{E}\big[\|\nabla f(\bar X^k)-H_i^{k+1}\|_*\big],
\end{equation}
where $\Delta_0:=\mathbb{E}[f(\bar X^0)]-f_{\inf}$.
\end{proposition}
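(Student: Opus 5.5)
The plan is to sum the one-step descent inequality of Lemma~\ref{lem:sudamuon-one-step-descent} over $k=0,\dots,K-1$, take expectations, and use the lower bound $f\ge f_{\inf}$ from Assumption~\ref{assump:smooth} to control the telescoped function-value difference.

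First, I rearrange~\eqref{eq:one-step-descent} so that the stationarity term sits on the left:
\[
\alpha\,\|\nabla f(\bar X^k)\|_* \le f(\bar X^k)-f(\bar X^{k+1}) + \frac{L_*\alpha^2}{2} + \frac{2\alpha}{N}\sum_{i=1}^N \|\nabla f(\bar X^k)-H_i^{k+1}\|_*.
\]
This holds pathwise, so I can take full expectations on both sides. All quantities involved are measurable functions of the algorithm's randomness. The expectation of $f(\bar X^k)$ is well defined provided $\mathbb{E}[f(\bar X^0)]$ is finite: by the averaged dynamics of Lemma~\ref{lem:sudamuon-average-dynamics}, $\|\bar X^{k+1}-\bar X^k\|\le\alpha$, and $L_*$-smoothness then gives finite growth of $f$ along the trajectory. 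I will note this only briefly, since it is routine.

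Next, I sum over $k=0,\dots,K-1$. The function-value differences telescope to $\mathbb{E}[f(\bar X^0)]-\mathbb{E}[f(\bar X^K)]$. Since $f\ge f_{\inf}$, this is at most $\mathbb{E}[f(\bar X^0)]-f_{\inf}=\Delta_0$. The constant term contributes $K L_*\alpha^2/2$, and the tracking-error terms contribute $\frac{2\alpha}{N}\sum_{k}\sum_i \mathbb{E}\|\nabla f(\bar X^k)-H_i^{k+1}\|_*$. Dividing through by $\alpha K>0$ then yields exactly~\eqref{eq:telescoping-descent}.

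There is no real obstacle here. The only point needing a word of care is integrability, which justifies interchanging expectation with the finite sum and ensures the expectations are finite. Even if some expectations were infinite, the inequality would hold trivially, because the right-hand side would then be $+\infty$. No assumption on the noise is needed at this stage: all stochastic and network effects are deferred to the tracking-error remainder, which is bounded in later lemmas.
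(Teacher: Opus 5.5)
Your proposal is correct and matches the paper's proof: sum the one-step inequality \eqref{eq:one-step-descent} over $k=0,\dots,K-1$, take expectations, use $\mathbb{E}[f(\bar X^K)]\ge f_{\inf}$, and divide by $\alpha K$. Your integrability remark is extra care the paper skips. The cleanest way to settle it is to telescope pathwise first and replace $f(\bar X^K)$ by $f_{\inf}$; every term on both sides is then nonnegative, so taking expectations needs no finiteness or growth argument.
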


\begin{proof}
Sum the one-step inequality \eqref{eq:one-step-descent} from Lemma~\ref{lem:sudamuon-one-step-descent} over $k=0,\dots,K-1$, take expectations, and use $\mathbb{E}[f(\bar X^K)]\ge f_{\inf}$.
Divide by $\alpha K$.
\end{proof}

%% ================================================================
\subsubsection*{EMA and Gradient Tracking Foundations}
%% ================================================================

\begin{lemma}[EMA stochastic error (per-agent and averaged)]
\label{lem:sudamuon-ema-stoch-error}
Suppose Assumption~\ref{assump:stoch} holds and the iterates are generated by~\eqref{eq:sudamuon-framework} (equivalently, Algorithm~\ref{alg:suda-muon}).
Fix any agent $i\in[N]$.
Define $C_i^0:=\nabla f_i(X_i^0)$ and for $k\ge 0$,
\[
C_i^{k+1}:=\beta C_i^k+(1-\beta)\nabla f_i(X_i^k),
\]
and recall that $M_i^0=G_i^0$ and $M_i^{k+1}=\beta M_i^k+(1-\beta)G_i^k$.
Then, for all $k\ge 0$,
\begin{equation}
\label{eq:ema-stoch-error-agent}
\mathbb{E}\,\|C_i^k-M_i^k\|_F
\;\le\;
\sqrt{\tfrac{1-\beta}{1+\beta}}\,\sigma \, +\, \beta^k\sigma.
\end{equation}
Moreover, defining the averages $\bar M^k:=\tfrac{1}{N}\sum_{i=1}^N M_i^k$ and $\bar C^k:=\tfrac{1}{N}\sum_{i=1}^N C_i^k$, we have for all $k\ge 0$,
\begin{equation}
\label{eq:ema-stoch-error-average}
\mathbb{E}\,\|\bar C^k-\bar M^k\|_F
\;\le\;
\sqrt{\tfrac{1-\beta}{1+\beta}}\,\tfrac{\sigma}{\sqrt{N}} \, +\, \beta^k\tfrac{\sigma}{\sqrt{N}}.
\end{equation}
Finally, letting $r_{\max}:=\min\{m,n\}$, for all $k\ge 0$,
\begin{equation}
\label{eq:ema-stoch-error-nuc}
\mathbb{E}\,\|C_i^k-M_i^k\|_* \;\le\; \sqrt{r_{\max}}\,\mathbb{E}\,\|C_i^k-M_i^k\|_F.
\end{equation}
\end{lemma}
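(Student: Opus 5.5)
The plan is to write the EMA error as a martingale-type sum of the noise terms and bound its second moment using conditional unbiasedness. Set $\varepsilon_i^k := G_i^k - \nabla f_i(X_i^k)$ and $E_i^k := M_i^k - C_i^k$. Subtracting the two recursions gives $E_i^{k+1} = \beta E_i^k + (1-\beta)\varepsilon_i^k$ with $E_i^0 = G_i^0 - \nabla f_i(X_i^0) = \varepsilon_i^0$. Unrolling yields
\[
E_i^k = \beta^k \varepsilon_i^0 + (1-\beta)\sum_{t=0}^{k-1}\beta^{k-1-t}\varepsilon_i^t .
\]
By the triangle inequality,
\[
\mathbb{E}\|E_i^k\|_F \le \beta^k\,\mathbb{E}\|\varepsilon_i^0\|_F + \mathbb{E}\Bigl\|(1-\beta)\sum_{t=0}^{k-1}\beta^{k-1-t}\varepsilon_i^t\Bigr\|_F .
\]
Splitting off $\beta^k\varepsilon_i^0$ separately is what produces the additive $\beta^k\sigma$ term in~\eqref{eq:ema-stoch-error-agent}. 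The first term is at most $\beta^k\sigma$ by Jensen and Assumption~\ref{assump:stoch}.

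For the weighted sum I will apply Jensen, $\mathbb{E}\|\cdot\|_F\le(\mathbb{E}\|\cdot\|_F^2)^{1/2}$, and expand the square. The cross terms $\mathbb{E}\langle \varepsilon_i^s,\varepsilon_i^t\rangle$ with $s<t$ vanish by the tower property: $\varepsilon_i^s$ is $\mathcal{F}^t$-measurable and $\mathbb{E}[\varepsilon_i^t\mid\mathcal{F}^t]=0$. This works because $X_i^t$ is determined by past samples, so the $\varepsilon_i^t$ form a martingale difference sequence. The second moment is then at most
\[
(1-\beta)^2\sigma^2\sum_{j\ge0}\beta^{2j} = \frac{(1-\beta)^2}{1-\beta^2}\,\sigma^2 = \frac{1-\beta}{1+\beta}\,\sigma^2,
\]
and taking the square root gives~\eqref{eq:ema-stoch-error-agent}.

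The main care point is the filtration bookkeeping: $\mathcal{F}^t$ must contain $\xi_i^s$ for $s<t$, and $X_i^t$ must be $\mathcal{F}^t$-measurable. This holds for the natural filtration because $X^{t}$ depends only on $G^0,\dots,G^{t-1}$.

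For the averaged bound~\eqref{eq:ema-stoch-error-average}, the argument is identical with $\varepsilon_i^t$ replaced by $\bar\varepsilon^t := \frac1N\sum_i\varepsilon_i^t$, since $\bar M^k - \bar C^k$ obeys the same linear recursion. The only new ingredient is $\mathbb{E}\|\bar\varepsilon^t\|_F^2\le\sigma^2/N$. This follows by expanding the square: cross-agent terms $\mathbb{E}\langle\varepsilon_i^t,\varepsilon_j^t\rangle$ for $i\neq j$ vanish by conditional independence across agents given $\mathcal{F}^t$ together with conditional zero mean. Time cross terms vanish exactly as before, so both $\sigma$ factors become $\sigma/\sqrt N$.

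Finally,~\eqref{eq:ema-stoch-error-nuc} is the deterministic inequality $\|Z\|_*=\sum_j s_j\le\sqrt{\mathrm{rank}(Z)}\,(\sum_j s_j^2)^{1/2}\le\sqrt{r_{\max}}\,\|Z\|_F$, obtained from Cauchy--Schwarz on the singular values. Applying it pointwise to $Z=C_i^k-M_i^k$ and taking expectations gives the claim.
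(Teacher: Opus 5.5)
Your proposal is correct and follows the paper's proof essentially step for step. It uses the same unrolled linear recursion for the EMA error, and the triangle inequality to isolate the $\beta^k$ initial-noise term; this also correctly handles the fact that $G_i^0$ enters both $M_i^0$ and $M_i^1$. The remaining steps also match: Jensen's inequality with vanishing martingale cross terms to obtain $\sqrt{(1-\beta)/(1+\beta)}\,\sigma$, conditional independence across agents for the $\sigma/\sqrt{N}$ factor, and Cauchy--Schwarz on the singular values for the nuclear-norm conversion.
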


\begin{proof}
Throughout, define the stochastic gradient noise
\[
    \Delta_i^k\;:=\;G_i^k-\nabla f_i(X_i^k).
\]
Then Assumption~\ref{assump:stoch} implies
\begin{equation}
\label{eq:delta-mds}
\mathbb{E}[\Delta_i^k\mid \mathcal{F}^k]=0,\qquad
\mathbb{E}[\|\Delta_i^k\|_F^2\mid \mathcal{F}^k]\le \sigma^2.
\end{equation}

\medskip\noindent\textbf{Step 1: per-agent EMA error.}
Let $E_i^k:=C_i^k-M_i^k$.
By the definitions of $C_i^{k+1}$ and $M_i^{k+1}$,
\begin{equation}
\label{eq:E-rec}
E_i^{k+1}=\beta E_i^k + (1-\beta)\bigl(\nabla f_i(X_i^k)-G_i^k\bigr)
=\beta E_i^k-(1-\beta)\Delta_i^k.
\end{equation}
Unrolling~\eqref{eq:E-rec} gives, for any $k\ge 0$,
\begin{equation}
\label{eq:E-unroll}
E_i^k
=(1-\beta)\sum_{t=0}^{k-1}\beta^{k-1-t}\bigl(\nabla f_i(X_i^t)-G_i^t\bigr)
+\beta^k\bigl(\nabla f_i(X_i^0)-G_i^0\bigr).
\end{equation}
Taking Frobenius norms, using the triangle inequality and Cauchy--Schwarz,
\begin{align}
\mathbb{E}\|E_i^k\|_F
&\le (1-\beta)\,\mathbb{E}\Big\|\sum_{t=0}^{k-1}\beta^{k-1-t}\bigl(\nabla f_i(X_i^t)-G_i^t\bigr)\Big\|_F
+\beta^k\,\mathbb{E}\|\nabla f_i(X_i^0)-G_i^0\|_F\nonumber\\
&\le (1-\beta)\,\sqrt{\mathbb{E}\Big\|\sum_{t=0}^{k-1}\beta^{k-1-t}\bigl(\nabla f_i(X_i^t)-G_i^t\bigr)\Big\|_F^2}
+\beta^k\,\sqrt{\mathbb{E}\|\nabla f_i(X_i^0)-G_i^0\|_F^2}.
\label{eq:E-cs}
\end{align}
The second term is bounded by $\beta^k\sigma$ using~\eqref{eq:delta-mds}.
For the first term, note that for $s<t$, by the tower property and~\eqref{eq:delta-mds},
\begin{align*}
&\mathbb{E}\bigl[\bigl\langle \nabla f_i(X_i^s)-G_i^s,\;
  \nabla f_i(X_i^t)-G_i^t\bigr\rangle\bigr]\\
&=\mathbb{E}\Bigl[\mathbb{E}\bigl[\bigl\langle \nabla f_i(X_i^s)-G_i^s,\;
  \nabla f_i(X_i^t)-G_i^t\bigr\rangle\mid\mathcal{F}^t\bigr]\Bigr]\\
&=\mathbb{E}\Bigl[\bigl\langle \nabla f_i(X_i^s)-G_i^s,\;
  \underbrace{\mathbb{E}[\nabla f_i(X_i^t)-G_i^t\mid\mathcal{F}^t]}_{=\,0}\bigr\rangle\Bigr]
=0.
\end{align*}
Hence all cross terms vanish in the expansion of the squared norm, and therefore
\begin{align*}
\mathbb{E}\Big\|\sum_{t=0}^{k-1}\beta^{k-1-t}\bigl(\nabla f_i(X_i^t)-G_i^t\bigr)\Big\|_F^2
&=\sum_{t=0}^{k-1}\beta^{2(k-1-t)}\,\mathbb{E}\|\nabla f_i(X_i^t)-G_i^t\|_F^2 \\
&\le \sigma^2\sum_{t=0}^{k-1}\beta^{2(k-1-t)}
=\sigma^2\frac{1-\beta^{2k}}{1-\beta^2}
\le \frac{\sigma^2}{1-\beta^2}.
\end{align*}
Plugging this into~\eqref{eq:E-cs} yields
\[
\mathbb{E}\|E_i^k\|_F
\le (1-\beta)\frac{\sigma}{\sqrt{1-\beta^2}}+\beta^k\sigma
=\sqrt{\tfrac{1-\beta}{1+\beta}}\,\sigma+\beta^k\sigma,
\]
which proves~\eqref{eq:ema-stoch-error-agent}.

\medskip\noindent\textbf{Step 2: averaged EMA error and the $1/\sqrt{N}$ factor.}
Let $\bar E^k:=\bar C^k-\bar M^k=\tfrac{1}{N}\sum_{i=1}^N E_i^k$ and $\bar\Delta^k:=\tfrac{1}{N}\sum_{i=1}^N \Delta_i^k$.
Averaging~\eqref{eq:E-rec} over $i$ gives
\begin{equation}
\label{eq:Ebar-rec}
\bar E^{k+1}=\beta\bar E^k-(1-\beta)\bar\Delta^k.
\end{equation}
We now bound the second moment of $\bar\Delta^k$.
By Assumption~\ref{assump:stoch}, \emph{conditional on $\mathcal{F}^k$ the random matrices $\{\Delta_i^k\}_{i=1}^N$ are independent across agents} and satisfy~\eqref{eq:delta-mds}.
Therefore, conditional on $\mathcal{F}^k$,
\begin{align}
\mathbb{E}\bigl[\|\bar\Delta^k\|_F^2\mid\mathcal{F}^k\bigr]
&=\mathbb{E}\Big[\Big\|\frac{1}{N}\sum_{i=1}^N \Delta_i^k\Big\|_F^2\,\Big|\,\mathcal{F}^k\Big] 
=\frac{1}{N^2}\sum_{i=1}^N \mathbb{E}\bigl[\|\Delta_i^k\|_F^2\mid\mathcal{F}^k\bigr]
\label{eq:variance-reduction-N}\\
&\le \frac{\sigma^2}{N}.
\nonumber
\end{align}
Equality~\eqref{eq:variance-reduction-N} is exactly where the conditional independence across agents is used: it removes the cross terms $\langle\Delta_i^k,\Delta_j^k\rangle$ for $i\neq j$.
Consequently, by Jensen,
\begin{equation}
\label{eq:barDelta-first-moment}
\mathbb{E}\|\bar\Delta^k\|_F\le \sqrt{\mathbb{E}\|\bar\Delta^k\|_F^2}\le \frac{\sigma}{\sqrt{N}}.
\end{equation}
Now apply the same argument as in Step~1 to the recursion~\eqref{eq:Ebar-rec}, using~\eqref{eq:variance-reduction-N} (in place of~\eqref{eq:delta-mds} with $\sigma^2$), to obtain
\[
\mathbb{E}\|\bar E^k\|_F
\le \sqrt{\tfrac{1-\beta}{1+\beta}}\,\frac{\sigma}{\sqrt{N}}+\beta^k\frac{\sigma}{\sqrt{N}},
\]
which is~\eqref{eq:ema-stoch-error-average}.

\medskip\noindent\textbf{Step 3: nuclear norm conversion.}
For any matrix $A\in\mathbb{R}^{m\times n}$ with singular values $(s_j)_{j=1}^{r_{\max}}$,
\[
\|A\|_* = \sum_{j=1}^{r_{\max}} s_j \le \sqrt{r_{\max}}\,\Big(\sum_{j=1}^{r_{\max}} s_j^2\Big)^{1/2}=\sqrt{r_{\max}}\,\|A\|_F,
\]
where the inequality is Cauchy--Schwarz.
Applying this pointwise with $A=E_i^k$ and taking expectations gives~\eqref{eq:ema-stoch-error-nuc}.
\end{proof}

\begin{lemma}[Gradient tracking preserves network averages]
\label{lem:sudamuon-gt-average-invariance}
Suppose Assumption~\ref{assump:network-suda}(1) holds (in particular, $W$ is doubly stochastic) and the gradient-tracking variable is updated as in~\eqref{eq:sudamuon-framework}:
\[
    \mathbf{H}^{k+1}=W\bigl(\mathbf{H}^k+\mathbf{M}^{k+1}-\mathbf{M}^k\bigr),\qquad k\ge 0.
\]
Assume the initialization satisfies $H_i^0=M_i^0$ for all $i\in[N]$ (equivalently, $\mathbf{H}^0=\mathbf{M}^0$).
Define the network averages
\[
    \bar H^k \;\triangleq\; \frac{1}{N}\sum_{i=1}^N H_i^k,\qquad
    \bar M^k \;\triangleq\; \frac{1}{N}\sum_{i=1}^N M_i^k.
\]
Then, for all $k\ge 0$,
\[
    \bar H^k \,=\, \bar M^k.
\]
In particular, for all $k\ge 0$,
\[
    \bar H^{k+1} \,=\, \bar H^k + \bigl(\bar M^{k+1}-\bar M^k\bigr).
\]
\end{lemma}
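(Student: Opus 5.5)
The plan is an induction on $k$ that tracks the network average. It relies only on the column-stochasticity of $W$, i.e. $\mathbf 1^\top W=\mathbf 1^\top$, which holds because $W$ is doubly stochastic under Assumption~\ref{assump:network-suda}(1). As in the proof of Lemma~\ref{lem:sudamuon-average-dynamics}, write $J=\frac1N\mathbf 1\mathbf 1^\top$. For any stacked variable $\mathbf Z$, the common block of $J\mathbf Z$ is the average $\bar Z$, and the stacked products are taken blockwise (i.e., $W\otimes I$ acting on matrix blocks), so $JW=J$ holds at the block level.

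First, apply $J$ (equivalently, multiply by $\frac1N\mathbf 1^\top$) to the tracking recursion $\mathbf H^{k+1}=W(\mathbf H^k+\mathbf M^{k+1}-\mathbf M^k)$. Since $\frac1N\mathbf 1^\top W=\frac1N\mathbf 1^\top$, this gives
\[
\bar H^{k+1}=\bar H^k+\bar M^{k+1}-\bar M^k ,
\]
which is the ``in particular'' statement. It holds for every $k\ge0$ regardless of initialization.

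Second, set $D^k:=\bar H^k-\bar M^k$. The identity above says $D^{k+1}=D^k$. The initialization $\mathbf H^0=\mathbf M^0$ from Algorithm~\ref{alg:suda-muon} gives $D^0=0$, so induction (or telescoping) yields $D^k=0$ for all $k$, i.e. $\bar H^k=\bar M^k$.

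There is no real obstacle here. The only point that needs care is that the result is purely linear: $\operatorname{msgn}$ does not enter the $\mathbf H$-recursion, so nonlinearity plays no role. One should also note that the stochastic gradients $\mathbf G^k$ appear only through $\mathbf M$, so the identity holds pathwise, not just in expectation.
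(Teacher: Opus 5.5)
Your proposal is correct and follows the paper's proof: left-multiply the tracking recursion by $J$, use $JW=J$ (which only needs $\mathbf 1^\top W=\mathbf 1^\top$) to get $\bar H^{k+1}=\bar H^k+\bar M^{k+1}-\bar M^k$, and then telescope from $\bar H^0=\bar M^0$. One minor point: the name $D^k$ is already used in the paper for the EMA drift term, so a different symbol for $\bar H^k-\bar M^k$ would avoid confusion.
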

\begin{proof}
Let $\mathbf{1}\in\mathbb{R}^N$ denote the all-ones vector and set
\[
    J \;\triangleq\; \frac{1}{N}\,\mathbf{1}\mathbf{1}^\top \in\mathbb{R}^{N\times N}.
\]
For any stacked variable $\mathbf{Z}=\mathrm{col}\{Z_1,\dots,Z_N\}$ (with matrix blocks $Z_i\in\mathbb{R}^{m\times n}$), the stacked vector $J\mathbf{Z}$ has all blocks equal to the average $\bar Z\triangleq\frac{1}{N}\sum_i Z_i$.

Since $W$ is doubly stochastic, we have $\mathbf{1}^\top W=\mathbf{1}^\top$, and thus
\[
    JW 
    \,=\, \frac{1}{N}\,\mathbf{1}(\mathbf{1}^\top W)
    \,=\, \frac{1}{N}\,\mathbf{1}\mathbf{1}^\top
    \,=\, J.
\]
Left-multiplying the gradient-tracking recursion by $J$ and using $JW=J$ yields
\[
\begin{aligned}
    J\mathbf{H}^{k+1}
    &= JW\bigl(\mathbf{H}^k+\mathbf{M}^{k+1}-\mathbf{M}^k\bigr)
     \,=\, J\bigl(\mathbf{H}^k+\mathbf{M}^{k+1}-\mathbf{M}^k\bigr) \\
    &= J\mathbf{H}^k + \bigl(J\mathbf{M}^{k+1}-J\mathbf{M}^k\bigr),\qquad \forall k\ge 0.
\end{aligned}
\]
Taking the common block of both sides (equivalently, premultiplying by $\tfrac{1}{N}\mathbf{1}^\top$) gives
\[
    \bar H^{k+1} \,=\, \bar H^k + \bigl(\bar M^{k+1}-\bar M^k\bigr),\qquad \forall k\ge 0.
\]
By the initialization $\bar H^0=\bar M^0$, the above identity telescopes to
\[
    \bar H^k - \bar M^k 
    \,=\, \bigl(\bar H^0-\bar M^0\bigr) \,=\, 0,
\]
for every $k\ge 0$. Hence $\bar H^k=\bar M^k$ for all $k\ge 0$.
\end{proof}

\begin{lemma}[Gradient-tracking disagreement recursion]
\label{lem:sudamuon-gt-disagreement}
Suppose Assumption~\ref{assump:network-suda}(1) holds and the gradient-tracking variable obeys
\[
    \mathbf{H}^{k+1}=W\bigl(\mathbf{H}^k+\mathbf{M}^{k+1}-\mathbf{M}^k\bigr),\qquad k\ge 0.
\]
Let $\mathbf{1}_N\in\mathbb{R}^N$ be the all-ones vector and define
\[
    J \;\triangleq\; \tfrac{1}{N}\,\mathbf{1}_N\mathbf{1}_N^\top\in\mathbb{R}^{N\times N},\qquad
    \tilde{\mathbf{Z}}^k \;\triangleq\; (I-J)\mathbf{Z}^k \;=\; \mathbf{Z}^k-J\mathbf{Z}^k.
\]
Then, for all $k\ge 0$,
\[
    \|\tilde{\mathbf{H}}^{k+1}\|_F
    \;\le\;
    \lambda\,\bigl\|\tilde{\mathbf{H}}^{k} + (\tilde{\mathbf{M}}^{k+1}-\tilde{\mathbf{M}}^{k})\bigr\|_F,
\]
where $\lambda\triangleq \rho\bigl(W-J\bigr)\in(0,1)$ is the mixing rate from Assumption~\ref{assump:network-suda}(1).
Consequently, for any $\eta>0$,
\[
    \|\tilde{\mathbf{H}}^{k+1}\|_F^2
    \;\le\; (1+\eta)\lambda^2\,\|\tilde{\mathbf{H}}^{k}\|_F^2
    + (1+1/\eta)\lambda^2\,\|\tilde{\mathbf{M}}^{k+1}-\tilde{\mathbf{M}}^{k}\|_F^2.
\]
\end{lemma}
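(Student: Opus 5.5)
The plan is to apply the consensus projector $I-J$ to the tracking recursion and use the fact that $W$ commutes with $J$. Recall from the proof of Lemma~\ref{lem:sudamuon-gt-average-invariance} that $JW=J$; by symmetry of $W$ also $WJ=J$.

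\textbf{Step 1: commute the projector.} Because $JW=WJ=J$, one has
\[
(I-J)W=W-J=(W-J)(I-J),
\]
where the last equality uses $(W-J)J=J-J=0$. Applying $(I-J)$ to $\mathbf{H}^{k+1}=W(\mathbf{H}^k+\mathbf{M}^{k+1}-\mathbf{M}^k)$ then gives
\[
\tilde{\mathbf{H}}^{k+1}=(W-J)\bigl(\tilde{\mathbf{H}}^{k}+\tilde{\mathbf{M}}^{k+1}-\tilde{\mathbf{M}}^{k}\bigr).
\]
Here linearity of $I-J$ is used to split the argument into the three tilde quantities.

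\textbf{Step 2: bound the Frobenius norm.} For stacked variables, left multiplication by an $N\times N$ matrix $P$ acts as $P\otimes I$ on the vectorization. Therefore $\|P\mathbf{Z}\|_F\le\|P\|_2\|\mathbf{Z}\|_F$. Since $W-J$ is symmetric, its spectral norm equals its spectral radius $\lambda$, and the first claim follows. This is the only point needing care: symmetry is what lets $\rho$ replace the operator norm. Alternatively, one could note that $W-J$ has the same eigenvectors as $W$ and vanishes on $\mathbf{1}$, and expand in the eigenbasis.

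\textbf{Step 3: the squared form.} Square the first claim and apply Young's inequality,
\[
\|a+b\|^2\le(1+\eta)\|a\|^2+(1+1/\eta)\|b\|^2,
\]
with $a=\tilde{\mathbf{H}}^k$ and $b=\tilde{\mathbf{M}}^{k+1}-\tilde{\mathbf{M}}^k$. This is routine, so there is no real obstacle in this step.
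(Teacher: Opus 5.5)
Your proposal is correct and follows essentially the same route as the paper's proof: you use $JW=WJ=J$ to obtain $\tilde{\mathbf{H}}^{k+1}=(W-J)\bigl(\tilde{\mathbf{H}}^{k}+\tilde{\mathbf{M}}^{k+1}-\tilde{\mathbf{M}}^{k}\bigr)$, bound the Frobenius norm through the Kronecker action with $\|W-J\|_2=\rho(W-J)$ by symmetry, and finish with Young's inequality. The only minor difference is that you derive $WJ=J$ from symmetry, whereas the paper derives it from $W\mathbf{1}_N=\mathbf{1}_N$; both are valid.
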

\begin{proof}
Since $W$ is doubly stochastic, we have $W\mathbf{1}_N=\mathbf{1}_N$ and $\mathbf{1}_N^\top W=\mathbf{1}_N^\top$, which imply
\[
    WJ \,=\, \tfrac{1}{N}W\mathbf{1}_N\mathbf{1}_N^\top \,=\, \tfrac{1}{N}\mathbf{1}_N\mathbf{1}_N^\top \,=\, J,
    \qquad
    JW \,=\, \tfrac{1}{N}\mathbf{1}_N(\mathbf{1}_N^\top W) \,=\, \tfrac{1}{N}\mathbf{1}_N\mathbf{1}_N^\top \,=\, J.
\]
Therefore,
\[
    (I-J)W \,=\, W-JW \,=\, W-J.
\]
Left-multiplying the gradient-tracking recursion by $(I-J)$ gives
\[
    \tilde{\mathbf{H}}^{k+1}
    \,=\, (I-J)\mathbf{H}^{k+1}
    \,=\, (I-J)W\bigl(\mathbf{H}^k+\mathbf{M}^{k+1}-\mathbf{M}^k\bigr)
    \,=\, (W-J)\bigl(\mathbf{H}^k+\mathbf{M}^{k+1}-\mathbf{M}^k\bigr).
\]
Moreover, for any stacked variable $\mathbf{Z}$ we can decompose $\mathbf{Z}=J\mathbf{Z}+(I-J)\mathbf{Z}$, and since $(W-J)J= WJ-J^2 = J-J=0$, we have
\[
    (W-J)\mathbf{Z} = (W-J)(I-J)\mathbf{Z} = (W-J)\tilde{\mathbf{Z}}.
\]
Applying this identity to $\mathbf{H}^k$ and $\mathbf{M}^{k+1}-\mathbf{M}^k$ yields
\[
    \tilde{\mathbf{H}}^{k+1}
    \,=\, (W-J)\bigl(\tilde{\mathbf{H}}^k + (\tilde{\mathbf{M}}^{k+1}-\tilde{\mathbf{M}}^k)\bigr).
\]
Next, view a stacked variable $\mathbf{Z}=\mathrm{col}\{Z_1,\dots,Z_N\}$ (with matrix blocks $Z_i\in\mathbb{R}^{m\times n}$) as a vector by vectorization. Then the action of $W-J$ on the agent index corresponds to the linear map $((W-J)\otimes I_{mn})$ on $\mathrm{vec}(\mathbf{Z})$, and
\[
    \|(W-J)\mathbf{Z}\|_F
    \,=\, \|((W-J)\otimes I_{mn})\,\mathrm{vec}(\mathbf{Z})\|_2
    \,\le\, \|(W-J)\otimes I_{mn}\|_2\,\|\mathrm{vec}(\mathbf{Z})\|_2
    \,=\, \|W-J\|_2\,\|\mathbf{Z}\|_F.
\]
Because $W$ is symmetric, $W-J$ is also symmetric. Hence its spectral norm equals its spectral radius:
\[
    \|W-J\|_2 \,=\, \max_{i}|\lambda_i(W-J)| \,=\, \rho(W-J) \,=\, \lambda.
\]
Combining the last two displays gives
\[
    \|\tilde{\mathbf{H}}^{k+1}\|_F
    \,\le\, \lambda\,\bigl\|\tilde{\mathbf{H}}^k + (\tilde{\mathbf{M}}^{k+1}-\tilde{\mathbf{M}}^k)\bigr\|_F.
\]
Finally, squaring and using the standard inequality $\|U+V\|_F^2\le (1+\eta)\|U\|_F^2+(1+1/\eta)\|V\|_F^2$ (valid for any $\eta>0$) with
$U=\tilde{\mathbf{H}}^k$ and $V=\tilde{\mathbf{M}}^{k+1}-\tilde{\mathbf{M}}^k$ yields
\[
\begin{aligned}
    \|\tilde{\mathbf{H}}^{k+1}\|_F^2
    &\le \lambda^2\,\bigl\|\tilde{\mathbf{H}}^k + (\tilde{\mathbf{M}}^{k+1}-\tilde{\mathbf{M}}^k)\bigr\|_F^2 \\
    &\le (1+\eta)\lambda^2\,\|\tilde{\mathbf{H}}^k\|_F^2
    + (1+1/\eta)\lambda^2\,\|\tilde{\mathbf{M}}^{k+1}-\tilde{\mathbf{M}}^k\|_F^2.
\end{aligned}
\]
This proves the claim.
\end{proof}

%% ================================================================
\subsubsection*{SUDA Backbone Analysis}
%% ================================================================

\begin{lemma}[SUDA backbone disagreement bound with bounded input directions]
\label{lem:sudamuon-suda-consensus-bounded-input}
Suppose Assumption~\ref{assump:network-suda} holds.
Consider the primal--dual recursion (stacked form)
\begin{equation}
\label{eq:sudamuon-suda-backbone-only}
\begin{aligned}
    \mathbf{X}^{k+1} &= A\bigl(C\mathbf{X}^k-\alpha\,\mathbf{U}^{k+1}\bigr)-B\mathbf{Y}^k,\\
    \mathbf{Y}^{k+1} &= \mathbf{Y}^k + B\mathbf{X}^{k+1},
\end{aligned}
\end{equation}
where $\alpha>0$ and $\mathbf{U}^{k+1}=\mathrm{col}\{U_1^{k+1},\ldots,U_N^{k+1}\}$ is an arbitrary (possibly time-varying) stacked input.
Let $J:=\tfrac{1}{N}\mathbf{1}\mathbf{1}^\top$ and $\tilde{\mathbf{Z}}:=(I-J)\mathbf{Z}$.
Assume $\mathbf Y^0=0$.

Assume additionally that the corresponding \emph{SUDA deviation matrix} (defined in the proof) is stable, i.e., all of its eigenvalues are strictly inside the unit circle.
Then there exist network-dependent constants
\[
    \gamma\in(0,1),\qquad v_1\ge 1,\qquad v_2\ge 1,\qquad \lambda_a\ge 0,
\]
(depending only on $A,B,C$ and the network) such that for all $k\ge 0$,
\begin{equation}
\label{eq:sudamuon-suda-consensus-bounded-input}
\begin{split}
\|\tilde{\mathbf{X}}^k\|_F
\;\le\;&\;
 v_1v_2\,\gamma^k\Big(\|\tilde{\mathbf{X}}^0\|_F+\|B^2\|\,\|\tilde{\mathbf{X}}^0\|_F\Big)
\\&\;+
\frac{\alpha\,v_1v_2\,\lambda_a}{1-\gamma}\,\sup_{0\le t\le k}\|\tilde{\mathbf{U}}^{t+1}\|_F.
\end{split}
\end{equation}
If, moreover, $\mathbf{U}^{k+1}=\operatorname{msgn}(\mathbf{H}^{k+1})$ is the blockwise matrix-sign (Muon) direction from~\eqref{eq:sudamuon-framework}, then Lemma~\ref{lem:sudamuon-msgn-basic} implies
\begin{equation}
\label{eq:sudamuon-suda-consensus-bounded-input-bdd}
\|\tilde{\mathbf{U}}^{k+1}\|_F \le 2\sqrt{Nr_{\max}},\qquad r_{\max}:=\min\{m,n\},
\end{equation}
and thus
\[
\|\tilde{\mathbf{X}}^k\|_F
\;\le\;
 v_1v_2\,\gamma^k\bigl(1+\|B^2\|\bigr)\,\|\tilde{\mathbf{X}}^0\|_F
\;+
\frac{2\alpha\,v_1v_2\,\lambda_a}{1-\gamma}\,\sqrt{Nr_{\max}}.
\]
\end{lemma}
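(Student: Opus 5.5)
The plan is the standard SUDA error-dynamics argument: rewrite the recursion as a linear time-invariant system for the consensus deviations, driven by the input $\tilde{\mathbf U}^{k+1}$, diagonalize the deviation matrix, and unroll.

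\textbf{Step 1: project onto the disagreement subspace.} Multiply the $\mathbf X$-update by $(I-J)$. Since $A,C,B^2$ are polynomials in the symmetric matrix $W$, they commute with $J$. In particular $(I-J)A=A(I-J)$ and $(I-J)C=C(I-J)$. From the proof of Lemma~\ref{lem:sudamuon-average-dynamics} we have $\mathbf 1^\top B\mathbf Y^k=0$, so $J B\mathbf Y^k=0$; also $B\mathbf 1=0$. This gives
\[
\tilde{\mathbf X}^{k+1}=AC\tilde{\mathbf X}^k-\alpha A\tilde{\mathbf U}^{k+1}-B\mathbf Y^k .
\]

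\textbf{Step 2: remove the dual variable.} Apply the usual SUDA trick and substitute $\mathbf Y^k=\mathbf Y^{k-1}+B\mathbf X^k$. Taking differences, or equivalently tracking the pair $(\tilde{\mathbf X}^k,\mathbf Y^k)$, yields
\[
\begin{pmatrix}\tilde{\mathbf X}^{k+1}\\ \mathbf Y^{k+1}\end{pmatrix}
=\mathcal G\begin{pmatrix}\tilde{\mathbf X}^{k}\\ \mathbf Y^{k}\end{pmatrix}
-\alpha\begin{pmatrix}A\\ BA\end{pmatrix}\tilde{\mathbf U}^{k+1},
\qquad
\mathcal G=\begin{pmatrix}AC & -B\\ BAC & I-B^2\end{pmatrix}.
\]
Here $\mathbf Y^k$ lies in $\mathrm{range}(B)\perp\mathbf 1$ because $\mathbf Y^0=0$. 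Restricted to $(\mathbf 1^\perp)^2$, this $\mathcal G$ is the SUDA deviation matrix, and it is stable by hypothesis.

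\textbf{Step 3: diagonalize mode by mode.} All blocks share the eigenbasis of $W$, so $\mathcal G$ decouples into $2\times2$ blocks, one per nonunit eigenvalue of $W$. Stability plus a Jordan or eigen decomposition gives $\mathcal G=V\Gamma V^{-1}$ with $\|\Gamma^k\|\le\gamma^k$ for some $\gamma\in(0,1)$. Take $v_1=\|V\|$ and $v_2=\|V^{-1}\|$. If a $2\times2$ block has a repeated eigenvalue, pick $\gamma$ slightly larger than the spectral radius and absorb the polynomial factor into $v_1v_2$.

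\textbf{Step 4: unroll the recursion.} Writing $\mathbf z^k$ for the stacked state $(\tilde{\mathbf X}^k,\mathbf Y^k)$,
\[
\mathbf z^k=\mathcal G^k\mathbf z^0-\alpha\sum_{t<k}\mathcal G^{k-1-t}\mathcal B\,\tilde{\mathbf U}^{t+1}.
\]
Set $\lambda_a:=\|\mathcal B\|$, where $\mathcal B$ is the input block. Summing the geometric series gives
\[
\|\mathbf z^k\|\le v_1v_2\gamma^k\|\mathbf z^0\|+\frac{\alpha v_1v_2\lambda_a}{1-\gamma}\sup_t\|\tilde{\mathbf U}^{t+1}\|_F .
\]
With $\mathbf Y^0=0$, the initial norm is at most $\|\tilde{\mathbf X}^0\|_F$. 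If one instead uses the variant state with $\mathbf Y^0$ replaced by $B\mathbf X^0$, the bound is $(1+\|B^2\|)\|\tilde{\mathbf X}^0\|_F$, matching the stated form up to the chosen normalization. Finally use $\|\tilde{\mathbf X}^k\|_F\le\|\mathbf z^k\|$.

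\textbf{Step 5: the Muon input bound.} For $\mathbf U=\operatorname{msgn}(\mathbf H)$ we have $\|\tilde{\mathbf U}\|_F\le\|\mathbf U\|_F+\|J\mathbf U\|_F\le2\|\mathbf U\|_F$. Lemma~\ref{lem:sudamuon-msgn-basic}(2) gives $\|U_i\|_F\le\sqrt{r_{\max}}$, so $\|\mathbf U\|_F\le\sqrt{Nr_{\max}}$. Substituting yields the last display.

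\textbf{Main obstacle.} The delicate part is Steps 2–3: choosing the state so that the dual variable stays in the disagreement subspace, and correctly identifying the deviation matrix and its uniform similarity transform. Possible Jordan blocks also need care, which is why $\gamma$ is taken slightly above the spectral radius in that case.
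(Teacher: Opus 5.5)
Your proof is correct. Its skeleton matches the paper's: linear deviation dynamics, a similarity $V\Gamma V^{-1}$ with $\|\Gamma\|<1$, geometric unrolling, and $\|\tilde{\mathbf U}\|_F\le 2\sqrt{Nr_{\max}}$. The real difference is the state you choose.

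\textbf{The paper's state.} The paper does not keep the raw dual variable. It substitutes $\mathbf z^k=\mathbf Y^k-B\mathbf X^k$ and works with $\mathbf s^k=B\mathbf z^k=B\mathbf Y^k-B^2\mathbf X^k$, which gives $\tilde{\mathbf X}^{k+1}=(AC-B^2)\tilde{\mathbf X}^k-\tilde{\mathbf s}^k-\alpha A\tilde{\mathbf U}^{k+1}$ and $\tilde{\mathbf s}^{k+1}=\tilde{\mathbf s}^k+B^2\tilde{\mathbf X}^k$. Only $A$, $C$, $B^2$ appear, and these are exactly the matrices that Assumption~\ref{assump:network-suda}(2)(iii) makes polynomials in $W$. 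So the modal $2\times 2$ decoupling is fully justified, and the input enters only the primal block. The price is the nonzero initial state $\mathbf s^0=-B^2\mathbf X^0$, which produces the factor $1+\|B^2\|$.

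\textbf{Your state.} Your state $(\tilde{\mathbf X}^k,\mathbf Y^k)$ starts at $(\tilde{\mathbf X}^0,0)$. This gives the sharper transient constant $\|\tilde{\mathbf X}^0\|_F$, which implies the stated bound. However, your $\mathcal G$ contains $B$ and $BAC$, and the assumption does not make $B$ itself a function of $W$. For example, if $W$ has eigenvalues $\pm\mu$, a symmetric square root of $I-W^2$ may mix the two eigenspaces. So your claim that all blocks share the eigenbasis of $W$ holds for the three listed instances but not in general. Similarly, $\mathbf Y^k\perp\mathbf 1$ requires $\mathbf 1^\top B=0$, not just $B\mathbf 1=0$.

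Neither point breaks your argument:
\begin{enumerate}
\item The decoupling is unnecessary. Stability of $\mathcal G$ on $(\mathbf 1^\perp)^2$ alone yields $V$ and $\Gamma$.
\item Stability of the paper's matrix forces $\ker B^2=\mathrm{span}(\mathbf 1)$, since a mode with zero $B^2$-eigenvalue has eigenvalue $1$. Hence $\mathbf 1^\perp=\mathrm{range}(B^2)$ is invariant under $B$, which commutes with $B^2$, and so $\mathbf 1^\top B=0$.
\item The map $T(\mathbf x,\mathbf y):=(\mathbf x,\,B\mathbf y-B^2\mathbf x)$ satisfies $T\mathcal G=\begin{bmatrix}AC-B^2&-I\\ B^2&I\end{bmatrix}T$ identically, and it is a bijection of $(\mathbf 1^\perp)^2$. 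So your deviation matrix is similar to the paper's, and the two stability hypotheses coincide.
\end{enumerate}
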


\begin{proof}
\medskip\noindent\textbf{Step 1: a non-incremental form involving $B^2$ only.}
Define
\[
    \mathbf z^k \triangleq \mathbf Y^k - B\mathbf X^k,
    \qquad
    \mathbf s^k \triangleq B\mathbf z^k = B\mathbf Y^k - B^2\mathbf X^k.
\]
Substituting $\mathbf Y^k=\mathbf z^k+B\mathbf X^k$ into~\eqref{eq:sudamuon-suda-backbone-only} yields
\begin{subequations}
\label{eq:sudamuon-backbone-noninc}
\begin{align}
    \mathbf{X}^{k+1}
    &= (AC-B^2)\mathbf{X}^k - \alpha A\mathbf{U}^{k+1} - B\mathbf{z}^k,\label{eq:sudamuon-backbone-noninc-X}\\
    \mathbf{z}^{k+1}
    &= \mathbf{z}^k + B\mathbf{X}^k.\label{eq:sudamuon-backbone-noninc-z}
\end{align}
\end{subequations}
Multiplying~\eqref{eq:sudamuon-backbone-noninc-z} by $B$ and using the definition of $\mathbf s^k$ gives
\begin{subequations}
\label{eq:sudamuon-backbone-xs}
\begin{align}
\mathbf X^{k+1} &= (AC-B^2)\mathbf X^k - \alpha A\mathbf U^{k+1} - \mathbf s^k,\label{eq:sudamuon-backbone-xs-X}\\
\mathbf s^{k+1} &= \mathbf s^k + B^2\mathbf X^k.\label{eq:sudamuon-backbone-xs-s}
\end{align}
\end{subequations}

\medskip\noindent\textbf{Step 2: restriction to the disagreement subspace.}
Let $\tilde{\mathbf Z}:=(I-J)\mathbf Z$.
Since $A,C,B^2$ are polynomials in $W$ (Assumption~\ref{assump:network-suda}(2)(iii)) and $W$ is doubly stochastic,
these matrices commute with $J$ and preserve the consensus/disagreement decomposition; in particular,
$\widetilde{A\mathbf Z}=A\tilde{\mathbf Z}$, $\widetilde{C\mathbf Z}=C\tilde{\mathbf Z}$, and $\widetilde{B^2\mathbf Z}=B^2\tilde{\mathbf Z}$.
Applying $(I-J)$ to~\eqref{eq:sudamuon-backbone-xs} yields
\begin{subequations}
\label{eq:sudamuon-backbone-xs-tilde}
\begin{align}
\tilde{\mathbf X}^{k+1} &= (AC-B^2)\tilde{\mathbf X}^k - \alpha A\tilde{\mathbf U}^{k+1} - \tilde{\mathbf s}^k,\\
\tilde{\mathbf s}^{k+1} &= \tilde{\mathbf s}^k + B^2\tilde{\mathbf X}^k.
\end{align}
\end{subequations}

\medskip\noindent\textbf{Step 3: diagonalization in the eigenbasis of $W$.}
Since $W$ is symmetric, it admits an orthogonal eigendecomposition
$W=U\mathrm{diag}(1,\hat\Lambda)U^\top$ with $U=[\tfrac{1}{\sqrt N}\mathbf 1,\hat U]$ and $\hat U\hat U^\top=I-J$.
Because $A,C,B^2$ are polynomials in $W$, they share the eigenvectors of $W$ and thus can be decomposed as
\[
A = U\,\mathrm{diag}(1,\hat\Lambda_a)\,U^\top,\qquad
C = U\,\mathrm{diag}(1,\hat\Lambda_c)\,U^\top,\qquad
B^2 = U\,\mathrm{diag}(0,\hat\Lambda_b^2)\,U^\top,
\]
for some diagonal $\hat\Lambda_a,\hat\Lambda_c,\hat\Lambda_b^2\in\mathbb{R}^{(N-1)\times(N-1)}$.
Define the projected variables
\[
\hat{\mathbf X}^k \triangleq \hat U^\top\mathbf X^k,\qquad
\hat{\mathbf s}^k \triangleq \hat U^\top\mathbf s^k,
\qquad
\hat{\mathbf U}^{k+1}\triangleq \hat U^\top\tilde{\mathbf U}^{k+1}.
\]
Then~\eqref{eq:sudamuon-backbone-xs-tilde} is equivalent to
\begin{subequations}
\label{eq:sudamuon-backbone-projected}
\begin{align}
\hat{\mathbf X}^{k+1}
&= (\hat\Lambda_a\hat\Lambda_c-\hat\Lambda_b^2)\hat{\mathbf X}^k - \hat{\mathbf s}^k - \alpha\hat\Lambda_a\hat{\mathbf U}^{k+1},\\
\hat{\mathbf s}^{k+1}
&= \hat{\mathbf s}^k + \hat\Lambda_b^2\hat{\mathbf X}^k.
\end{align}
\end{subequations}
Introduce the augmented state $\hat\xi^k:=\bigl[\hat{\mathbf X}^k;\hat{\mathbf s}^k\bigr]$ and the deviation matrix
\[
\G\triangleq
\begin{bmatrix}
\hat\Lambda_a\hat\Lambda_c-\hat\Lambda_b^2 & -I\\
\hat\Lambda_b^2 & I
\end{bmatrix}.
\]
Then
\begin{equation}
\label{eq:sudamuon-backbone-xi-rec}
\hat\xi^{k+1}=\G\hat\xi^k-\alpha\begin{bmatrix}\hat\Lambda_a\hat{\mathbf U}^{k+1}\\0\end{bmatrix}.
\end{equation}
By the stability assumption on $\G$, there exist an invertible matrix $\hat V$ and a matrix $\Gamma$ such that
$\G=\hat V\Gamma\hat V^{-1}$ and $\|\Gamma\|=: \gamma<1$.
Define $v_1:=\|\hat V\|$, $v_2:=\|\hat V^{-1}\|$, and $\lambda_a:=\|\hat\Lambda_a\|$.

\medskip\noindent\textbf{Step 4: contraction and unrolling.}
Let $\hat e^k:=\hat V^{-1}\hat\xi^k$.
Multiplying~\eqref{eq:sudamuon-backbone-xi-rec} by $\hat V^{-1}$ gives
\[
\hat e^{k+1}=\Gamma\hat e^k-\alpha\hat V^{-1}\begin{bmatrix}\hat\Lambda_a\hat{\mathbf U}^{k+1}\\0\end{bmatrix}.
\]
Taking Frobenius norms and using $\|\hat{\mathbf U}^{k+1}\|_F\le \|\tilde{\mathbf U}^{k+1}\|_F$ yields
\[
\|\hat e^{k+1}\|_F\le \gamma\|\hat e^k\|_F+\alpha v_2\lambda_a\|\tilde{\mathbf U}^{k+1}\|_F.
\]
Unrolling gives
\[
\|\hat e^{k}\|_F\le \gamma^k\|\hat e^0\|_F+\frac{\alpha v_2\lambda_a}{1-\gamma}\sup_{0\le t\le k}\|\tilde{\mathbf U}^{t+1}\|_F.
\]
Since $\hat\xi^k=\hat V\hat e^k$, we have $\|\hat{\mathbf X}^k\|_F\le \|\hat\xi^k\|_F\le v_1\|\hat e^k\|_F$ and $\|\hat e^0\|_F\le v_2\|\hat\xi^0\|_F$.
Moreover, $\mathbf Y^0=0$ implies $\mathbf s^0=-B^2\mathbf X^0$ and hence $\hat{\mathbf s}^0=-\hat\Lambda_b^2\hat{\mathbf X}^0$.
Therefore $\|\hat\xi^0\|_F\le (1+\|\hat\Lambda_b^2\|)\|\hat{\mathbf X}^0\|_F\le (1+\|B^2\|)\|\tilde{\mathbf X}^0\|_F$.
Finally, since $\hat U\hat U^\top=I-J$ we have $\|\hat{\mathbf X}^k\|_F=\|\tilde{\mathbf X}^k\|_F$.
Combining these bounds yields~\eqref{eq:sudamuon-suda-consensus-bounded-input}.

\medskip\noindent\textbf{Step 5: boundedness of the Muon directions.}
If $\mathbf{U}^{k+1}=\operatorname{msgn}(\mathbf{H}^{k+1})$, then by Lemma~\ref{lem:sudamuon-msgn-basic}, each block satisfies $\|U_i^{k+1}\|_F\le\sqrt{r_{\max}}$.
Thus
\[
\|\tilde{\mathbf{U}}^{k+1}\|_F
\le \|\mathbf U^{k+1}\|_F+\|J\mathbf U^{k+1}\|_F
\le 2\|\mathbf U^{k+1}\|_F
\le 2\sqrt{\sum_{i=1}^N\|U_i^{k+1}\|_F^2}
\le 2\sqrt{Nr_{\max}},
\]
which is~\eqref{eq:sudamuon-suda-consensus-bounded-input-bdd}.
\end{proof}

%% ================================================================
\subsubsection*{Drift, Mismatch, and Tracking Error Decomposition}
%% ================================================================

\begin{lemma}[EMA bias for averaged true gradients (drift term)]
\label{lem:sudamuon-ema-drift-average}
Suppose Assumption~\ref{assump:smooth} holds.
Define the averaged true gradient at the local iterates
\[
    \overline{\nabla \mathbf f}(\mathbf X^k)
    \;\triangleq\; \frac{1}{N}\sum_{i=1}^N \nabla f_i(X_i^k),
\]
and the averaged EMA of true gradients by
\[
    \bar C^0 \;:=\; \overline{\nabla \mathbf f}(\mathbf X^0),
    \qquad
    \bar C^{k+1} \;:=\; \beta\,\bar C^k+(1-\beta)\,\overline{\nabla \mathbf f}(\mathbf X^k),\qquad k\ge 0.
\]
Define the (one-step-ahead) EMA bias
\[
    D^k \;\triangleq\; \overline{\nabla \mathbf f}(\mathbf X^k)-\bar C^{k+1},\qquad k\ge 0.
\]
Then, for all $k\ge 0$, $D^k$ obeys the recursion
\[
    D^{k+1}
    \,=\, \beta D^k + \beta\bigl(\overline{\nabla \mathbf f}(\mathbf X^{k+1})-\overline{\nabla \mathbf f}(\mathbf X^{k})\bigr).
\]
Moreover, for all $k\ge 0$,
\begin{equation}
\label{eq:sudamuon-avg-grad-drift-one-step}
\bigl\|\overline{\nabla \mathbf f}(\mathbf X^{k+1})-\overline{\nabla \mathbf f}(\mathbf X^{k})\bigr\|_*
\;\le\; \frac{L_*}{N}\sum_{i=1}^N \|X_i^{k+1}-X_i^k\|.
\end{equation}
If, in addition, the iterates are generated by Algorithm~\ref{alg:suda-muon}, then
\begin{equation}
\label{eq:sudamuon-avg-step-split-consensus}
\begin{split}
\frac{1}{N}\sum_{i=1}^N \|X_i^{k+1}-X_i^k\|
\;\le\;&\;
\|\bar X^{k+1}-\bar X^k\|
+ \frac{1}{\sqrt N}\,\|\tilde{\mathbf X}^{k+1}\|_F
+ \frac{1}{\sqrt N}\,\|\tilde{\mathbf X}^{k}\|_F,
\end{split}
\end{equation}
where $\bar X^k:=\tfrac1N\sum_i X_i^k$ and $\tilde{\mathbf X}^k:=(I-J)\mathbf X^k$ with $J:=\tfrac1N\mathbf{1}\mathbf{1}^\top$.
Consequently, using Lemmas~\ref{lem:sudamuon-average-dynamics} and~\ref{lem:sudamuon-one-step-descent} (which imply $\|\bar X^{k+1}-\bar X^k\|\le \alpha$), we have for all $k\ge 0$,
\begin{equation}
\label{eq:sudamuon-Dk-sup-bound}
\|D^k\|_*
\;\le\;
\frac{\beta L_*}{1-\beta}
\Big(
\alpha
+ \frac{2}{\sqrt N}\,\sup_{t\ge 0}\bigl(\|\tilde{\mathbf X}^{t+1}\|_F+\|\tilde{\mathbf X}^{t}\|_F\bigr)
\Big).
\end{equation}
In particular, if Lemma~\ref{lem:sudamuon-suda-consensus-bounded-input} holds (so that for all $t\ge 0$,
$\|\tilde{\mathbf X}^{t}\|_F\le v_1v_2\gamma^t\|\tilde{\mathbf X}^0\|_F + \tfrac{2\alpha v_1v_2\lambda_a}{1-\gamma}\sqrt{Nr_{\max}}$), then for all $k\ge 0$,
\begin{equation}
\label{eq:sudamuon-Dk-explicit}
\|D^k\|_*
\;\le\;
\frac{\beta L_*}{1-\beta}
\Bigg[
\alpha
+ \frac{4v_1v_2}{\sqrt N}\,\|\tilde{\mathbf X}^0\|_F
+ \frac{8\alpha v_1v_2\lambda_a}{1-\gamma}\,\sqrt{r_{\max}}
\Bigg].
\end{equation}
\end{lemma}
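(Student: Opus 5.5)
The plan is to verify each displayed claim in order, using only the definitions, Assumption~\ref{assump:smooth}, and the earlier lemmas.

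\emph{The recursion.} From the definition,
\[
D^{k+1}=\overline{\nabla \mathbf f}(\mathbf X^{k+1})-\beta\bar C^{k+1}-(1-\beta)\overline{\nabla \mathbf f}(\mathbf X^{k+1})=\beta\bigl(\overline{\nabla \mathbf f}(\mathbf X^{k+1})-\bar C^{k+1}\bigr).
\]
Adding and subtracting $\overline{\nabla \mathbf f}(\mathbf X^k)$ inside the bracket gives $\beta D^k+\beta(\overline{\nabla \mathbf f}(\mathbf X^{k+1})-\overline{\nabla \mathbf f}(\mathbf X^k))$. For the initial value, $\bar C^0=\overline{\nabla \mathbf f}(\mathbf X^0)$ gives $\bar C^1=\overline{\nabla \mathbf f}(\mathbf X^0)$, hence $D^0=0$. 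Unrolling then yields
\[
D^k=\sum_{t=0}^{k-1}\beta^{k-t}\bigl(\overline{\nabla \mathbf f}(\mathbf X^{t+1})-\overline{\nabla \mathbf f}(\mathbf X^t)\bigr).
\]

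\emph{Gradient drift \eqref{eq:sudamuon-avg-grad-drift-one-step}.} Apply the triangle inequality for $\|\cdot\|_*$ to the average, then Assumption~\ref{assump:smooth} to each $f_i$ separately.

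\emph{Step split \eqref{eq:sudamuon-avg-step-split-consensus}.} Write $X_i^k=\bar X^k+\tilde X_i^k$. The triangle inequality gives
\[
\|X_i^{k+1}-X_i^k\|\le\|\bar X^{k+1}-\bar X^k\|+\|\tilde X_i^{k+1}\|+\|\tilde X_i^k\|.
\]
Averaging over $i$, the consensus pieces are handled by $\|\cdot\|\le\|\cdot\|_F$ and Cauchy--Schwarz:
\[
\frac1N\sum_i\|\tilde X_i^k\|_F\le\frac{1}{\sqrt N}\|\tilde{\mathbf X}^k\|_F .
\]

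\emph{Bound \eqref{eq:sudamuon-Dk-sup-bound}.} Lemma~\ref{lem:sudamuon-average-dynamics} gives $\bar X^{k+1}-\bar X^k=-\alpha\bar S^{k+1}$. By \eqref{eq:barS-op-bound}, $\|\bar S^{k+1}\|\le1$, so this step has operator norm at most $\alpha$. Each increment in the unrolled sum is therefore at most
\[
L_*\Bigl(\alpha+\tfrac{1}{\sqrt N}\sup_t(\|\tilde{\mathbf X}^{t+1}\|_F+\|\tilde{\mathbf X}^t\|_F)\Bigr).
\]
This is further bounded after inflating the $1/\sqrt N$ to $2/\sqrt N$ (loose but harmless). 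Summing $\sum_{t}\beta^{k-t}\le\beta/(1-\beta)$ gives \eqref{eq:sudamuon-Dk-sup-bound}.

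\emph{Explicit bound \eqref{eq:sudamuon-Dk-explicit}.} Insert the uniform consensus bound quoted in the statement into each of $\|\tilde{\mathbf X}^{t+1}\|_F$ and $\|\tilde{\mathbf X}^t\|_F$, using $\gamma^t\le1$. Their sum is at most
\[
2v_1v_2\|\tilde{\mathbf X}^0\|_F+\frac{4\alpha v_1v_2\lambda_a}{1-\gamma}\sqrt{Nr_{\max}} .
\]
Multiplying by $2/\sqrt N$ cancels the $\sqrt N$ in the second term and gives exactly the constants $4v_1v_2/\sqrt N$ and $8\alpha v_1v_2\lambda_a\sqrt{r_{\max}}/(1-\gamma)$.

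\emph{Main obstacle.} Nothing here is deep; the only delicate points are bookkeeping. First, $D^0=0$ must be checked so that the geometric sum starts cleanly. Second, the consensus bound must be used in the form stated in the lemma's hypothesis, i.e.\ without the factor $(1+\|B^2\|)$ that appears in Lemma~\ref{lem:sudamuon-suda-consensus-bounded-input}. I would flag that, to be fully rigorous from that lemma, one should either absorb $(1+\|B^2\|)$ into $v_1v_2$ or assume $\tilde{\mathbf X}^0=0$ (consensual initialization), in which case the term vanishes anyway.
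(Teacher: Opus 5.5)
Your proposal is correct and follows the paper's proof essentially step for step. It uses the same derivation of the recursion with $D^0=0$, the same triangle-inequality/Cauchy--Schwarz split of the average step, the bound $\|\bar X^{k+1}-\bar X^k\|\le\alpha$ from $\|\bar S^{k+1}\|\le 1$, the same geometric factor $\beta/(1-\beta)$ (you unroll the exact recursion before taking norms while the paper unrolls the scalar inequality, which is immaterial), and the same substitution of the uniform consensus bound. Your flag is also accurate: the paper's proof uses the consensus bound in the statement's parenthetical form, without the factor $(1+\|B^2\|)$ that Lemma~\ref{lem:sudamuon-suda-consensus-bounded-input} actually delivers, so absorbing that factor into the constants or assuming consensual initialization is the right way to reconcile the two.
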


\begin{proof}
Throughout, abbreviate $g^k:=\overline{\nabla \mathbf f}(\mathbf X^k)$.

\medskip\noindent\textbf{Step 1: bias recursion.}
By the definition of $D^{k+1}$ and the EMA update,
\[
D^{k+1}
= g^{k+1}-\bar C^{k+2}
= g^{k+1}-\bigl(\beta\bar C^{k+1}+(1-\beta)g^{k+1}\bigr)
= \beta\bigl(g^{k+1}-\bar C^{k+1}\bigr).
\]
Next, add and subtract $g^k$ and use $D^k=g^k-\bar C^{k+1}$ to obtain
\[
    g^{k+1}-\bar C^{k+1}
    = \bigl(g^{k+1}-g^k\bigr)+\bigl(g^k-\bar C^{k+1}\bigr)
    = \bigl(g^{k+1}-g^k\bigr)+D^k.
\]
Combining the last two displays yields
\[
    D^{k+1}
    = \beta D^k + \beta\bigl(g^{k+1}-g^k\bigr),
\]
as claimed.

\medskip\noindent\textbf{Step 2: bound the averaged true-gradient drift by smoothness.}
By convexity of $\|\cdot\|_*$ and Assumption~\ref{assump:smooth},
\[
\begin{aligned}
\|g^{k+1}-g^k\|_*
&= \Big\|\frac{1}{N}\sum_{i=1}^N\bigl(\nabla f_i(X_i^{k+1})-\nabla f_i(X_i^k)\bigr)\Big\|_* \\
&\le \frac{1}{N}\sum_{i=1}^N\|\nabla f_i(X_i^{k+1})-\nabla f_i(X_i^k)\|_* \\
&\le \frac{L_*}{N}\sum_{i=1}^N\|X_i^{k+1}-X_i^k\|,
\end{aligned}
\]
which is~\eqref{eq:sudamuon-avg-grad-drift-one-step}.

\medskip\noindent\textbf{Step 3: bound the average step size by the average motion and consensus errors.}
For each $i$, insert and subtract the averages $\bar X^{k+1}$ and $\bar X^k$ and use triangle inequality:
\[
\|X_i^{k+1}-X_i^k\|
\le
\|X_i^{k+1}-\bar X^{k+1}\| + \|\bar X^{k+1}-\bar X^k\| + \|\bar X^k-X_i^k\|.
\]
Averaging over $i$ yields
\[
\frac{1}{N}\sum_{i=1}^N \|X_i^{k+1}-X_i^k\|
\le
\|\bar X^{k+1}-\bar X^k\|
+ \frac{1}{N}\sum_{i=1}^N\|X_i^{k+1}-\bar X^{k+1}\|
+ \frac{1}{N}\sum_{i=1}^N\|X_i^{k}-\bar X^{k}\|.
\]
For the disagreement terms, use $\|A\|\le \|A\|_F$ and Cauchy--Schwarz:
\[
\frac{1}{N}\sum_{i=1}^N\|X_i^{k}-\bar X^{k}\|
\le
\frac{1}{N}\sum_{i=1}^N\|X_i^{k}-\bar X^{k}\|_F
\le
\frac{1}{\sqrt N}\Big(\sum_{i=1}^N\|X_i^{k}-\bar X^{k}\|_F^2\Big)^{1/2}
= \frac{1}{\sqrt N}\,\|\tilde{\mathbf X}^{k}\|_F
\le \frac{2}{\sqrt N}\,\|\tilde{\mathbf X}^{k}\|_F.
\]
The same bound holds with $k$ replaced by $k+1$, yielding~\eqref{eq:sudamuon-avg-step-split-consensus}.

\medskip\noindent\textbf{Step 4: unroll the bias recursion.}
Combine the recursion from Step~1, the drift bound~\eqref{eq:sudamuon-avg-grad-drift-one-step}, and the step-size bound~\eqref{eq:sudamuon-avg-step-split-consensus} to get
\[
\|D^{k+1}\|_*
\le \beta\|D^k\|_* + \beta L_*\Big(\|\bar X^{k+1}-\bar X^k\| + \tfrac{2}{\sqrt N}\|\tilde{\mathbf X}^{k+1}\|_F + \tfrac{2}{\sqrt N}\|\tilde{\mathbf X}^{k}\|_F\Big).
\]
Under Algorithm~\ref{alg:suda-muon}, Lemma~\ref{lem:sudamuon-average-dynamics} gives
$\|\bar X^{k+1}-\bar X^k\|=\alpha\|\bar S^{k+1}\|$, and Lemma~\ref{lem:sudamuon-one-step-descent} (specifically~\eqref{eq:barS-op-bound} therein) implies $\|\bar S^{k+1}\|\le 1$, hence $\|\bar X^{k+1}-\bar X^k\|\le \alpha$.
Moreover, $D^0=g^0-\bar C^1=0$ because $\bar C^0=g^0$.
Unrolling the above linear recursion and bounding the geometric series yields
\[
\|D^k\|_*
\le \beta L_*\sum_{t=0}^{k-1}\beta^{k-1-t}
\Big(\alpha + \tfrac{2}{\sqrt N}\|\tilde{\mathbf X}^{t+1}\|_F + \tfrac{2}{\sqrt N}\|\tilde{\mathbf X}^{t}\|_F\Big)
\le
\frac{\beta L_*}{1-\beta}
\Big(\alpha + \tfrac{2}{\sqrt N}\sup_{t\ge 0}(\|\tilde{\mathbf X}^{t+1}\|_F+\|\tilde{\mathbf X}^{t}\|_F)\Big),
\]
which proves~\eqref{eq:sudamuon-Dk-sup-bound}.

\medskip\noindent\textbf{Step 5: plug in the SUDA consensus bound (Lemma~\ref{lem:sudamuon-suda-consensus-bounded-input}).}
Lemma~\ref{lem:sudamuon-suda-consensus-bounded-input} implies for all $t\ge 0$,
\[
\|\tilde{\mathbf X}^t\|_F
\le v_1v_2\,\|\tilde{\mathbf X}^0\|_F + \frac{2\alpha v_1v_2\lambda_a}{1-\gamma}\sqrt{Nr_{\max}},
\]
since $\gamma^t\le 1$.
Therefore,
\[
\sup_{t\ge 0}(\|\tilde{\mathbf X}^{t+1}\|_F+\|\tilde{\mathbf X}^{t}\|_F)
\le 2v_1v_2\,\|\tilde{\mathbf X}^0\|_F + \frac{4\alpha v_1v_2\lambda_a}{1-\gamma}\sqrt{Nr_{\max}}.
\]
Substituting this into~\eqref{eq:sudamuon-Dk-sup-bound} and simplifying gives~\eqref{eq:sudamuon-Dk-explicit}.
\end{proof}

\begin{lemma}[Consensus-induced gradient mismatch]
\label{lem:sudamuon-grad-mismatch-consensus}
Assume Assumption~\ref{assump:smooth}.
For any stacked primal iterate $\mathbf{X}^k=\mathrm{col}\{X_1^k,\dots,X_N^k\}$, define
\[
\bar X^k:=\tfrac{1}{N}\sum_{i=1}^N X_i^k,\qquad
\tilde{\mathbf X}^k := (I-J)\mathbf X^k,\quad J:=\tfrac{1}{N}\mathbf 1\mathbf 1^\top.
\]
Then the mismatch between the gradient at the averaged point and the average of local gradients at local points satisfies
\begin{equation}
\label{eq:grad-mismatch-consensus}
\Big\|\nabla f(\bar X^k)-\frac{1}{N}\sum_{i=1}^N \nabla f_i(X_i^k)\Big\|_*
\;\le\; \frac{L_*}{\sqrt N}\,\|\tilde{\mathbf X}^k\|_F.
\end{equation}
\end{lemma}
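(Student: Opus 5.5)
The argument is a direct chain of three standard inequalities, in the same spirit as Step~3 of Lemma~\ref{lem:sudamuon-ema-drift-average}. First rewrite the global gradient as an average of local gradients, $\nabla f(\bar X^k)=\frac1N\sum_{i=1}^N\nabla f_i(\bar X^k)$, so that the quantity to be bounded becomes
\[
\Big\|\frac1N\sum_{i=1}^N\bigl(\nabla f_i(\bar X^k)-\nabla f_i(X_i^k)\bigr)\Big\|_*.
\]
By the triangle inequality (convexity) of the nuclear norm, this is at most $\frac1N\sum_{i=1}^N\|\nabla f_i(\bar X^k)-\nabla f_i(X_i^k)\|_*$. Assumption~\ref{assump:smooth} then bounds each summand by $L_*\|\bar X^k-X_i^k\|$ in operator norm, exactly as in~\eqref{eq:f-smoothness}.

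Next, pass from operator norm to Frobenius norm blockwise using $\|A\|\le\|A\|_F$. Then apply Cauchy--Schwarz over the agent index:
\[
\frac1N\sum_{i=1}^N\|X_i^k-\bar X^k\|_F\le\frac{1}{\sqrt N}\Big(\sum_{i=1}^N\|X_i^k-\bar X^k\|_F^2\Big)^{1/2}.
\]
Finally, observe that the $i$-th block of $\tilde{\mathbf X}^k=(I-J)\mathbf X^k$ is exactly $X_i^k-\bar X^k$. Hence the last square root equals $\|\tilde{\mathbf X}^k\|_F$, which yields~\eqref{eq:grad-mismatch-consensus} with the constant $L_*/\sqrt N$.

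There is no real obstacle here. The only point requiring care is the norm bookkeeping: the smoothness assumption pairs the nuclear norm on gradients with the operator norm on iterates, so the Frobenius conversion must be applied on the iterate side, where $\|\cdot\|\le\|\cdot\|_F$ holds with no dimension factor. Applying it on the gradient side instead would incur a $\sqrt{r_{\max}}$ loss.
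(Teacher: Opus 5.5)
Your proof is correct and is essentially the paper's argument: write $\nabla f(\bar X^k)$ as an average of local gradients, apply the nuclear-norm triangle inequality and Assumption~\ref{assump:smooth}, pass from operator to Frobenius norm on the iterate side, and finish with Cauchy--Schwarz over agents to recover $\|\tilde{\mathbf X}^k\|_F$.
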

\begin{proof}
Using $\nabla f(\bar X^k)=\tfrac{1}{N}\sum_{i=1}^N \nabla f_i(\bar X^k)$, we write
\[
\nabla f(\bar X^k)-\frac{1}{N}\sum_{i=1}^N \nabla f_i(X_i^k)
=\frac{1}{N}\sum_{i=1}^N \bigl(\nabla f_i(\bar X^k)-\nabla f_i(X_i^k)\bigr).
\]
By convexity of the nuclear norm and the triangle inequality,
\[
\Big\|\nabla f(\bar X^k)-\frac{1}{N}\sum_{i=1}^N \nabla f_i(X_i^k)\Big\|_*
\le \frac{1}{N}\sum_{i=1}^N \|\nabla f_i(\bar X^k)-\nabla f_i(X_i^k)\|_*.
\]
By Assumption~\ref{assump:smooth}, $\|\nabla f_i(\bar X^k)-\nabla f_i(X_i^k)\|_*\le L_*\|\bar X^k-X_i^k\|$.
Moreover, $\|\bar X^k-X_i^k\|\le \|\bar X^k-X_i^k\|_F$.
Therefore,
\[
\Big\|\nabla f(\bar X^k)-\frac{1}{N}\sum_{i=1}^N \nabla f_i(X_i^k)\Big\|_*
\le \frac{L_*}{N}\sum_{i=1}^N \|X_i^k-\bar X^k\|_F.
\]
Finally, Cauchy--Schwarz gives
\[
\frac{1}{N}\sum_{i=1}^N \|X_i^k-\bar X^k\|_F
\le \frac{1}{\sqrt N}\Big(\sum_{i=1}^N \|X_i^k-\bar X^k\|_F^2\Big)^{1/2}
=\frac{1}{\sqrt N}\,\|\tilde{\mathbf X}^k\|_F.
\]
Combining the last two displays yields~\eqref{eq:grad-mismatch-consensus}.
\end{proof}

\begin{lemma}[Decomposing the tracking error $\|\nabla f(\bar X^k)-H_i^{k+1}\|_*$]
\label{lem:sudamuon-tracking-error-decomp}
Assume Assumptions~\ref{assump:network-suda}--\ref{assump:stoch}.
Let $(\mathbf X^k,\mathbf M^k,\mathbf H^k)$ be generated by~\eqref{eq:sudamuon-framework} with initialization $H_i^0=M_i^0$ for all $i\in[N]$.
Define
\[
\bar X^k:=\tfrac{1}{N}\sum_{i=1}^N X_i^k,
\qquad
\bar M^k:=\tfrac{1}{N}\sum_{i=1}^N M_i^k,
\qquad
\bar H^k:=\tfrac{1}{N}\sum_{i=1}^N H_i^k,
\]
\[
\tilde{\mathbf X}^k := (I-J)\mathbf X^k,
\qquad
\tilde{\mathbf H}^k := (I-J)\mathbf H^k,\quad J:=\tfrac{1}{N}\mathbf 1\mathbf 1^\top.
\]
Moreover, define the averaged true-gradient EMA $(\bar C^k)_{k\ge 0}$ and the drift variable $(D^k)_{k\ge 0}$ as in Lemma~\ref{lem:sudamuon-ema-drift-average}:
\[
\bar C^{0}:=\overline{\nabla \mathbf f}(\mathbf X^0),
\qquad
\bar C^{k+1} := \beta\bar C^k+(1-\beta)\,\overline{\nabla \mathbf f}(\mathbf X^k),
\qquad
D^k := \overline{\nabla \mathbf f}(\mathbf X^k)-\bar C^{k+1},
\]
where $\overline{\nabla \mathbf f}(\mathbf X^k):=\tfrac{1}{N}\sum_{i=1}^N \nabla f_i(X_i^k)$.
Then for every $k\ge 0$,
\begin{equation}
\label{eq:tracking-error-average-bound}
\begin{split}
\frac{1}{N}\sum_{i=1}^N \|\nabla f(\bar X^k)-H_i^{k+1}\|_*
\;\le\;&\;
\frac{L_*}{\sqrt N}\,\|\tilde{\mathbf X}^k\|_F
\;+
\|D^k\|_*
\\&\;+
\|\bar C^{k+1}-\bar M^{k+1}\|_*
\;+
\frac{1}{N}\sum_{i=1}^N\|H_i^{k+1}-\bar H^{k+1}\|_*.
\end{split}
\end{equation}
Furthermore, letting $r_{\max}:=\min\{m,n\}$, we have the conversion bound
\begin{equation}
\label{eq:H-disagreement-nuc-vs-F}
\frac{1}{N}\sum_{i=1}^N\|H_i^{k+1}-\bar H^{k+1}\|_*
\;\le\; \sqrt{\frac{r_{\max}}{N}}\,\|\tilde{\mathbf H}^{k+1}\|_F.
\end{equation}
Also,
\begin{equation}
\label{eq:barC-barM-nuc-vs-F}
\|\bar C^{k+1}-\bar M^{k+1}\|_* \;\le\; \sqrt{r_{\max}}\,\|\bar C^{k+1}-\bar M^{k+1}\|_F.
\end{equation}
\end{lemma}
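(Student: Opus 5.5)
The plan is to decompose each per-agent error $\nabla f(\bar X^k)-H_i^{k+1}$ by a telescoping sum that inserts, in order, the average of local gradients $\overline{\nabla \mathbf f}(\mathbf X^k)$, the averaged true-gradient EMA $\bar C^{k+1}$, the averaged stochastic EMA $\bar M^{k+1}$, and the tracked average $\bar H^{k+1}$. Concretely, for each $i$ we write
\begin{equation*}
\nabla f(\bar X^k)-H_i^{k+1}
=\bigl(\nabla f(\bar X^k)-\overline{\nabla \mathbf f}(\mathbf X^k)\bigr)
+D^k
+\bigl(\bar C^{k+1}-\bar M^{k+1}\bigr)
+\bigl(\bar M^{k+1}-\bar H^{k+1}\bigr)
+\bigl(\bar H^{k+1}-H_i^{k+1}\bigr).
\end{equation*}
Here the second summand equals $D^k$ by its definition in Lemma~\ref{lem:sudamuon-ema-drift-average}. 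The fourth summand vanishes identically by Lemma~\ref{lem:sudamuon-gt-average-invariance}, which applies because the initialization $H_i^0=M_i^0$ gives $\bar H^{k+1}=\bar M^{k+1}$.

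We then apply the triangle inequality for $\|\cdot\|_*$ and average over $i$. The first three summands do not depend on $i$, so averaging leaves them unchanged. The first is bounded by $\tfrac{L_*}{\sqrt N}\|\tilde{\mathbf X}^k\|_F$ via Lemma~\ref{lem:sudamuon-grad-mismatch-consensus}. The second and third are kept as $\|D^k\|_*$ and $\|\bar C^{k+1}-\bar M^{k+1}\|_*$. The last summand contributes $\tfrac1N\sum_i\|H_i^{k+1}-\bar H^{k+1}\|_*$. Together these give \eqref{eq:tracking-error-average-bound}.

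For the two conversion bounds we use $\|A\|_*\le\sqrt{r_{\max}}\|A\|_F$, which is Cauchy--Schwarz on the singular values as in Step~3 of Lemma~\ref{lem:sudamuon-ema-stoch-error}. Applied directly, it yields \eqref{eq:barC-barM-nuc-vs-F}. For \eqref{eq:H-disagreement-nuc-vs-F} we apply it blockwise and then use Cauchy--Schwarz over agents:
\[
\tfrac1N\sum_i\|H_i^{k+1}-\bar H^{k+1}\|_F\le \tfrac{1}{\sqrt N}\Bigl(\sum_i\|H_i^{k+1}-\bar H^{k+1}\|_F^2\Bigr)^{1/2}=\tfrac{1}{\sqrt N}\|\tilde{\mathbf H}^{k+1}\|_F.
\]
Multiplying by $\sqrt{r_{\max}}$ gives the claim.

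No step is a genuine obstacle. The only point requiring care is the index alignment: $D^k$ is defined with $\bar C^{k+1}$, and $H_i^{k+1}$ is paired with $\bar M^{k+1}$. We must confirm that the intermediate terms cancel exactly at time index $k+1$, and that the average-preservation identity is invoked at time $k+1$ rather than $k$.
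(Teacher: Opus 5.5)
Your proposal is correct and matches the paper's proof essentially step for step. It uses the same telescoping through $\overline{\nabla \mathbf f}(\mathbf X^k)$, $\bar C^{k+1}$ and $\bar M^{k+1}=\bar H^{k+1}$ (Lemma~\ref{lem:sudamuon-gt-average-invariance}), then the triangle inequality with Lemma~\ref{lem:sudamuon-grad-mismatch-consensus} for the first term, and finally the conversion $\|A\|_*\le\sqrt{r_{\max}}\|A\|_F$, combined with Cauchy--Schwarz over agents for the $\mathbf H$-disagreement bound.
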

\begin{proof}
\medskip\noindent\textbf{Step 1: a four-term decomposition.}
Fix $k\ge 0$ and an agent $i\in[N]$.
Using Lemma~\ref{lem:sudamuon-gt-average-invariance}, we have $\bar H^{k+1}=\bar M^{k+1}$.
Therefore, by repeated addition/subtraction and the triangle inequality,
\begin{align*}
\|\nabla f(\bar X^k)-H_i^{k+1}\|_*
&\le
\Big\|\nabla f(\bar X^k)-\overline{\nabla \mathbf f}(\mathbf X^k)\Big\|_*
+\Big\|\overline{\nabla \mathbf f}(\mathbf X^k)-\bar C^{k+1}\Big\|_* \\
&\quad +\|\bar C^{k+1}-\bar M^{k+1}\|_* + \|\bar H^{k+1}-H_i^{k+1}\|_* \\
&=
\Big\|\nabla f(\bar X^k)-\overline{\nabla \mathbf f}(\mathbf X^k)\Big\|_*
+\|D^k\|_* 
+\|\bar C^{k+1}-\bar M^{k+1}\|_* 
+\|H_i^{k+1}-\bar H^{k+1}\|_*.
\end{align*}
Averaging over $i$ yields
\begin{align*}
\frac{1}{N}\sum_{i=1}^N \|\nabla f(\bar X^k)-H_i^{k+1}\|_*
&\le
\Big\|\nabla f(\bar X^k)-\overline{\nabla \mathbf f}(\mathbf X^k)\Big\|_*
+\|D^k\|_* \\
&\quad+\|\bar C^{k+1}-\bar M^{k+1}\|_* 
+\frac{1}{N}\sum_{i=1}^N\|H_i^{k+1}-\bar H^{k+1}\|_*.
\end{align*}
\medskip\noindent\textbf{Step 2: bound the first term by disagreement of $\mathbf X^k$.}
Apply Lemma~\ref{lem:sudamuon-grad-mismatch-consensus} to obtain
\[
\Big\|\nabla f(\bar X^k)-\overline{\nabla \mathbf f}(\mathbf X^k)\Big\|_*
\le \frac{L_*}{\sqrt N}\,\|\tilde{\mathbf X}^k\|_F,
\]
which proves~\eqref{eq:tracking-error-average-bound}.

\medskip\noindent\textbf{Step 3: disagreement-to-average conversion for $\mathbf H^{k+1}$.}
For each $i$, by Cauchy--Schwarz on singular values, $\|A\|_*\le \sqrt{r_{\max}}\|A\|_F$.
Therefore,
\[
\frac{1}{N}\sum_{i=1}^N\|H_i^{k+1}-\bar H^{k+1}\|_*
\le \frac{\sqrt{r_{\max}}}{N}\sum_{i=1}^N\|H_i^{k+1}-\bar H^{k+1}\|_F
\le \sqrt{\frac{r_{\max}}{N}}\,\Big(\sum_{i=1}^N\|H_i^{k+1}-\bar H^{k+1}\|_F^2\Big)^{1/2}.
\]
Since $\tilde{\mathbf H}^{k+1}=(I-J)\mathbf H^{k+1}$ has $i$-th block $H_i^{k+1}-\bar H^{k+1}$,
$\|\tilde{\mathbf H}^{k+1}\|_F^2=\sum_{i=1}^N\|H_i^{k+1}-\bar H^{k+1}\|_F^2$, proving~\eqref{eq:H-disagreement-nuc-vs-F}.

\medskip\noindent\textbf{Step 4: nuclear-to-Frobenius conversion for the averaged EMA noise.}
The same inequality $\|A\|_*\le \sqrt{r_{\max}}\|A\|_F$ yields~\eqref{eq:barC-barM-nuc-vs-F}.
\end{proof}

%% ================================================================
\subsubsection*{Forcing Terms and Disagreement Bounds}
%% ================================================================

\begin{lemma}[A uniform bound on the stacked one-step motion]
\label{lem:sudamuon-stacked-step-bound}
Assume Assumptions~\ref{assump:network-suda}--\ref{assump:muon} and that the iterates are generated by~\eqref{eq:sudamuon-framework} with $Y_i^0=0$.
Define the agentwise average $\bar X^k:=\tfrac{1}{N}\sum_{i=1}^N X_i^k$, let $J:=\tfrac{1}{N}\mathbf{1}\mathbf{1}^\top$, and define $\tilde{\mathbf{X}}^k:=(I-J)\mathbf{X}^k$.
Then, for every $k\ge 0$,
\begin{equation}
\label{eq:stacked-step-decomp}
\|\mathbf{X}^{k+1}-\mathbf{X}^k\|_F
\;\le\;
\sqrt{N}\,\|\bar X^{k+1}-\bar X^k\|_F
+\|\tilde{\mathbf{X}}^{k+1}\|_F+\|\tilde{\mathbf{X}}^{k}\|_F.
\end{equation}
Consequently, using Lemma~\ref{lem:sudamuon-average-dynamics} and Lemma~\ref{lem:sudamuon-msgn-basic} (which imply $\|\bar X^{k+1}-\bar X^k\|_F\le \alpha\sqrt{r_{\max}}$), we have
\begin{equation}
\label{eq:stacked-step-alpha-tilde}
\|\mathbf{X}^{k+1}-\mathbf{X}^k\|_F
\;\le\;
\alpha\sqrt{Nr_{\max}}+\|\tilde{\mathbf{X}}^{k+1}\|_F+\|\tilde{\mathbf{X}}^{k}\|_F,
\qquad r_{\max}:=\min\{m,n\}.
\end{equation}
If, moreover, the assumptions and initialization of Lemma~\ref{lem:sudamuon-suda-consensus-bounded-input} hold with $\mathbf{U}^{k+1}=\operatorname{msgn}(\mathbf{H}^{k+1})$ (so that $\|\tilde{\mathbf{X}}^t\|_F\le v_1v_2\gamma^t\|\tilde{\mathbf{X}}^0\|_F+\tfrac{2\alpha v_1v_2\lambda_a}{1-\gamma}\sqrt{Nr_{\max}}$ for all $t$), then
\begin{equation}
\label{eq:stacked-step-uniform}
\sup_{k\ge 0}\|\mathbf{X}^{k+1}-\mathbf{X}^k\|_F
\;\le\;
\alpha\sqrt{Nr_{\max}}
+2v_1v_2\|\tilde{\mathbf{X}}^0\|_F
+\frac{4\alpha v_1v_2\lambda_a}{1-\gamma}\,\sqrt{Nr_{\max}}.
\end{equation}
\end{lemma}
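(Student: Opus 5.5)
The plan is to decompose the stacked one-step motion orthogonally into its consensus and disagreement parts, bound each part separately, and then substitute the bounds already available from Lemma~\ref{lem:sudamuon-average-dynamics}, Lemma~\ref{lem:sudamuon-msgn-basic}, and Lemma~\ref{lem:sudamuon-suda-consensus-bounded-input}.

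\textbf{Step 1: the decomposition~\eqref{eq:stacked-step-decomp}.} Write $\mathbf X^{k+1}-\mathbf X^k = J(\mathbf X^{k+1}-\mathbf X^k) + (I-J)(\mathbf X^{k+1}-\mathbf X^k)$. The first piece is the stacked consensus variable $\mathrm{col}\{\bar X^{k+1}-\bar X^k,\dots,\bar X^{k+1}-\bar X^k\}$. Its Frobenius norm is therefore exactly $\sqrt N\,\|\bar X^{k+1}-\bar X^k\|_F$. The second piece equals $\tilde{\mathbf X}^{k+1}-\tilde{\mathbf X}^k$. The triangle inequality then bounds it by $\|\tilde{\mathbf X}^{k+1}\|_F+\|\tilde{\mathbf X}^k\|_F$. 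Adding the two pieces (triangle inequality again, or even Pythagoras, which is sharper) gives~\eqref{eq:stacked-step-decomp}.

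\textbf{Step 2: the bound~\eqref{eq:stacked-step-alpha-tilde}.} By Lemma~\ref{lem:sudamuon-average-dynamics}, $\bar X^{k+1}-\bar X^k=-\alpha\bar S^{k+1}$. By convexity of the Frobenius norm and Lemma~\ref{lem:sudamuon-msgn-basic}(2), each $\|S_i^{k+1}\|_F\le\sqrt{r_{\max}}$, so $\|\bar S^{k+1}\|_F\le\sqrt{r_{\max}}$. Multiplying by $\sqrt N$ yields the term $\alpha\sqrt{N r_{\max}}$, which proves~\eqref{eq:stacked-step-alpha-tilde}.

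\textbf{Step 3: the uniform bound~\eqref{eq:stacked-step-uniform}.} Insert the bound of Lemma~\ref{lem:sudamuon-suda-consensus-bounded-input} (in the form stated in the hypothesis) for both $t=k$ and $t=k+1$, and use $\gamma^t\le1$. This gives $\|\tilde{\mathbf X}^{k+1}\|_F+\|\tilde{\mathbf X}^k\|_F\le 2v_1v_2\|\tilde{\mathbf X}^0\|_F+\tfrac{4\alpha v_1v_2\lambda_a}{1-\gamma}\sqrt{Nr_{\max}}$. The right-hand side does not depend on $k$, so taking the supremum over $k$ yields~\eqref{eq:stacked-step-uniform}.

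There is no real obstacle here. The only point needing care is in Step 1: the consensus part must be identified correctly, with the factor $\sqrt N$ coming from the $N$ identical blocks. One should also note that the hypothesis of Lemma~\ref{lem:sudamuon-suda-consensus-bounded-input} is used in the simplified form displayed in the statement, without the factor $1+\|B^2\|$; we simply take that form as given.
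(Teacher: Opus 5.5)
Your proposal is correct and follows the paper's proof essentially step for step. The same three moves appear there: the orthogonal split via $J$ and $I-J$, with the $\sqrt N$ factor coming from the $N$ identical consensus blocks; the bound $\|\bar S^{k+1}\|_F\le\sqrt{r_{\max}}$ from convexity and Lemma~\ref{lem:sudamuon-msgn-basic}(2); and substituting the consensus bound with $\gamma^t\le 1$ before taking the supremum. Your remark about the dropped factor $1+\|B^2\|$ is accurate: the paper's own proof also uses the simplified form displayed in the statement.
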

\begin{proof}
\medskip\noindent\textbf{Step 1: decompose into average and disagreement parts.}
Write
\[
\mathbf{X}^{k+1}-\mathbf{X}^k
=J(\mathbf{X}^{k+1}-\mathbf{X}^k) + (I-J)(\mathbf{X}^{k+1}-\mathbf{X}^k).
\]
Take Frobenius norms and use the triangle inequality:
\[
\|\mathbf{X}^{k+1}-\mathbf{X}^k\|_F
\le \|J(\mathbf{X}^{k+1}-\mathbf{X}^k)\|_F + \|(I-J)(\mathbf{X}^{k+1}-\mathbf{X}^k)\|_F.
\]
Because $J\mathbf{Z}$ has all blocks equal to $\bar Z:=\tfrac1N\sum_i Z_i$, we have
$J(\mathbf{X}^{k+1}-\mathbf{X}^k)=\mathrm{col}\{\bar X^{k+1}-\bar X^k,\ldots,\bar X^{k+1}-\bar X^k\}$ and hence
\[
\|J(\mathbf{X}^{k+1}-\mathbf{X}^k)\|_F = \sqrt{N}\,\|\bar X^{k+1}-\bar X^k\|_F.
\]
Moreover, $(I-J)(\mathbf{X}^{k+1}-\mathbf{X}^k)=\tilde{\mathbf{X}}^{k+1}-\tilde{\mathbf{X}}^k$, so
\[
\|(I-J)(\mathbf{X}^{k+1}-\mathbf{X}^k)\|_F
\le \|\tilde{\mathbf{X}}^{k+1}\|_F+\|\tilde{\mathbf{X}}^{k}\|_F.
\]
Combining the last three displays yields~\eqref{eq:stacked-step-decomp}.

\medskip\noindent\textbf{Step 2: bound the average motion in Frobenius norm.}
By Lemma~\ref{lem:sudamuon-average-dynamics},
$\bar X^{k+1}-\bar X^k=-\alpha\bar S^{k+1}$, where $\bar S^{k+1}=\tfrac1N\sum_{i=1}^N S_i^{k+1}$ and $S_i^{k+1}=\operatorname{msgn}(H_i^{k+1})$.
By Lemma~\ref{lem:sudamuon-msgn-basic}(2), $\|S_i^{k+1}\|_F\le \sqrt{r_{\max}}$.
By convexity of the Frobenius norm,
$\|\bar S^{k+1}\|_F\le \tfrac1N\sum_i \|S_i^{k+1}\|_F\le \sqrt{r_{\max}}$.
Hence $\|\bar X^{k+1}-\bar X^k\|_F\le \alpha\sqrt{r_{\max}}$.
Substituting this into~\eqref{eq:stacked-step-decomp} yields~\eqref{eq:stacked-step-alpha-tilde}.

\medskip\noindent\textbf{Step 3: plug in the SUDA consensus bound.}
Under Lemma~\ref{lem:sudamuon-suda-consensus-bounded-input} with Muon directions, for any $t\ge 0$,
$\|\tilde{\mathbf{X}}^t\|_F\le v_1v_2\|\tilde{\mathbf{X}}^0\|_F+\tfrac{2\alpha v_1v_2\lambda_a}{1-\gamma}\sqrt{Nr_{\max}}$ since $\gamma^t\le 1$.
Plugging this into~\eqref{eq:stacked-step-alpha-tilde} and taking the supremum over $k$ yields~\eqref{eq:stacked-step-uniform}.
\end{proof}

\begin{lemma}[Bounding the gradient-difference forcing term]
\label{lem:sudamuon-grad-diff-forcing}
Assume Assumptions~\ref{assump:smooth} and~\ref{assump:stoch}.
Let $\mathbf{X}^k$ be any (possibly random) stacked iterates and let
$\mathbf{G}^k=\mathrm{col}\{G_1^k,\ldots,G_N^k\}$ be the stochastic gradients with
$G_i^k=\nabla F_i(X_i^k;\xi_i^k)$.
Define the stacked noise $\Delta_i^k:=G_i^k-\nabla f_i(X_i^k)$ and $\mathbf{\Delta}^k:=\mathrm{col}\{\Delta_1^k,\ldots,\Delta_N^k\}$.
Let $J:=\tfrac1N\mathbf{1}\mathbf{1}^\top$ and $\tilde{\mathbf Z}^k:=(I-J)\mathbf Z^k$.
Then for any $k\ge 1$,
\begin{equation}
\label{eq:sudamuon-tildeG-diff-bound}
\mathbb{E}\big[\|\tilde{\mathbf G}^k-\tilde{\mathbf G}^{k-1}\|_F^2\big]
\;\le\;
2L_*^2\,\mathbb{E}\big[\|\mathbf X^k-\mathbf X^{k-1}\|_F^2\big]
+8N\sigma^2.
\end{equation}
In particular, if Lemma~\ref{lem:sudamuon-stacked-step-bound} holds so that
$\|\mathbf X^t-\mathbf X^{t-1}\|_F\le B_X$ for all $t\ge 1$, then
\begin{equation}
\label{eq:sudamuon-tildeG-diff-uniform}
\sup_{k\ge 1}\mathbb{E}\big[\|\tilde{\mathbf G}^k-\tilde{\mathbf G}^{k-1}\|_F^2\big]
\;\le\;
2L_*^2 B_X^2 + 8N\sigma^2.
\end{equation}
\end{lemma}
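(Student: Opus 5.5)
The plan is to split the stochastic gradient into its deterministic and noise parts and then bound each part separately. Write $\mathbf G^k=\nabla\mathbf f(\mathbf X^k)+\mathbf\Delta^k$, where $\nabla\mathbf f(\mathbf X^k):=\mathrm{col}\{\nabla f_1(X_1^k),\dots,\nabla f_N(X_N^k)\}$. Then
\[
\tilde{\mathbf G}^k-\tilde{\mathbf G}^{k-1}=(I-J)\bigl(\nabla\mathbf f(\mathbf X^k)-\nabla\mathbf f(\mathbf X^{k-1})\bigr)+(I-J)\bigl(\mathbf\Delta^k-\mathbf\Delta^{k-1}\bigr).
\]
Applying $\|U+V\|_F^2\le 2\|U\|_F^2+2\|V\|_F^2$ reduces the claim to two pieces. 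We need to bound the first squared norm by $L_*^2\|\mathbf X^k-\mathbf X^{k-1}\|_F^2$ and the expected second squared norm by $4N\sigma^2$. For both, $(I-J)$ is an orthogonal projector, so $\|(I-J)\mathbf Z\|_F\le\|\mathbf Z\|_F$ and we may drop it.

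\textbf{Deterministic part.} Blockwise, Assumption~\ref{assump:smooth} gives $\|\nabla f_i(X_i^k)-\nabla f_i(X_i^{k-1})\|_F\le\|\cdot\|_*\le L_*\|X_i^k-X_i^{k-1}\|\le L_*\|X_i^k-X_i^{k-1}\|_F$. Here we use $\|A\|_F\le\|A\|_*$ and $\|A\|\le\|A\|_F$. Squaring and summing over $i$ yields $L_*^2\|\mathbf X^k-\mathbf X^{k-1}\|_F^2$, and multiplying by the factor 2 gives the first term in~\eqref{eq:sudamuon-tildeG-diff-bound}.

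\textbf{Noise part.} We use $\|\mathbf\Delta^k-\mathbf\Delta^{k-1}\|_F^2\le 2\|\mathbf\Delta^k\|_F^2+2\|\mathbf\Delta^{k-1}\|_F^2$. By Assumption~\ref{assump:stoch} and the tower property, $\mathbb E\|\mathbf\Delta^t\|_F^2=\sum_i\mathbb E\,\mathbb E[\|\Delta_i^t\|_F^2\mid\mathcal F^t]\le N\sigma^2$. This gives $4N\sigma^2$, and doubling gives $8N\sigma^2$. This crude route avoids the cross term $\langle\mathbf\Delta^k,\mathbf\Delta^{k-1}\rangle$ entirely. That is convenient, because the cross term's vanishing would require care: $\mathbf\Delta^{k-1}$ is $\mathcal F^k$-measurable, so it would actually vanish, but we do not need that.

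\textbf{Uniform bound.} Take expectations, then substitute $\|\mathbf X^t-\mathbf X^{t-1}\|_F\le B_X$ pointwise (with $B_X$ the right-hand side of~\eqref{eq:stacked-step-uniform}) and take the supremum over $k$. This gives~\eqref{eq:sudamuon-tildeG-diff-uniform}.

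There is no real obstacle in this argument. The only point needing care is the norm conversion: smoothness is stated nuclear-to-operator, but we need Frobenius-to-Frobenius. The chain $\|\cdot\|_F\le\|\cdot\|_*$ on the gradient side and $\|\cdot\|\le\|\cdot\|_F$ on the iterate side handles this without any rank factor.
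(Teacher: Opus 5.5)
Your proposal is correct and follows the paper's proof essentially line for line. It uses the same split $\mathbf G^k=\nabla\mathbf f(\mathbf X^k)+\mathbf\Delta^k$, non-expansiveness of $I-J$, the factor-$2$ Young inequality, the blockwise chain $\|\cdot\|_F\le\|\cdot\|_*\le L_*\|\cdot\|\le L_*\|\cdot\|_F$, and the crude bound $2\|\mathbf\Delta^k\|_F^2+2\|\mathbf\Delta^{k-1}\|_F^2$ that yields $8N\sigma^2$. The only difference is that you drop $I-J$ after splitting rather than before, which changes nothing.
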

\begin{proof}
Fix $k\ge 1$. Decompose
\[
\mathbf G^k-\mathbf G^{k-1}
= \bigl(\nabla \mathbf f(\mathbf X^k)-\nabla \mathbf f(\mathbf X^{k-1})\bigr)
+ \bigl(\mathbf\Delta^k-\mathbf\Delta^{k-1}\bigr),
\]
where $\nabla \mathbf f(\mathbf X^k):=\mathrm{col}\{\nabla f_1(X_1^k),\ldots,\nabla f_N(X_N^k)\}$.
Since $I-J$ is an orthogonal projection, $\|\tilde{\mathbf Z}\|_F\le \|\mathbf Z\|_F$ for all stacked $\mathbf Z$.
Therefore, using $\|U+V\|_F^2\le 2\|U\|_F^2+2\|V\|_F^2$,
\begin{align*}
\|\tilde{\mathbf G}^k-\tilde{\mathbf G}^{k-1}\|_F^2
&= \| (I-J)(\mathbf G^k-\mathbf G^{k-1})\|_F^2
\le \|\mathbf G^k-\mathbf G^{k-1}\|_F^2 \\
&\le 2\,\|\nabla \mathbf f(\mathbf X^k)-\nabla \mathbf f(\mathbf X^{k-1})\|_F^2
+2\,\|\mathbf\Delta^k-\mathbf\Delta^{k-1}\|_F^2.
\end{align*}
Take expectations.

For the first term, by Assumption~\ref{assump:smooth}, for each $i$ we have
\[
\|\nabla f_i(X_i^k)-\nabla f_i(X_i^{k-1})\|_F
\le \|\nabla f_i(X_i^k)-\nabla f_i(X_i^{k-1})\|_*
\le L_*\,\|X_i^k-X_i^{k-1}\|
\le L_*\,\|X_i^k-X_i^{k-1}\|_F.
\]
Squaring and summing over $i$ yields
$\|\nabla \mathbf f(\mathbf X^k)-\nabla \mathbf f(\mathbf X^{k-1})\|_F^2\le L_*^2\,\|\mathbf X^k-\mathbf X^{k-1}\|_F^2$.

For the second term, use $\|A-B\|_F^2\le 2\|A\|_F^2+2\|B\|_F^2$ blockwise:
\[
\|\mathbf\Delta^k-\mathbf\Delta^{k-1}\|_F^2
=\sum_{i=1}^N \|\Delta_i^k-\Delta_i^{k-1}\|_F^2
\le 2\sum_{i=1}^N\|\Delta_i^k\|_F^2+2\sum_{i=1}^N\|\Delta_i^{k-1}\|_F^2.
\]
Taking expectations and using Assumption~\ref{assump:stoch} (unconditional version of the conditional variance bound) gives
$\mathbb{E}\|\Delta_i^t\|_F^2\le \sigma^2$ for all $i,t$, and hence
$\mathbb{E}\|\mathbf\Delta^k-\mathbf\Delta^{k-1}\|_F^2\le 4N\sigma^2$.

Combining the above bounds proves~\eqref{eq:sudamuon-tildeG-diff-bound}.
Finally, if $\|\mathbf X^k-\mathbf X^{k-1}\|_F\le B_X$ holds uniformly, then
\eqref{eq:sudamuon-tildeG-diff-uniform} follows immediately.
\end{proof}

\begin{lemma}[A bound on the EMA increment disagreement]
\label{lem:sudamuon-ema-increment-disagreement}
Assume Assumption~\ref{assump:stoch}.
Let the iterates be generated by~\eqref{eq:sudamuon-framework} with the EMA initialization $\mathbf M^0=\mathbf G^0$ (as in Algorithm~\ref{alg:suda-muon}).
Let $\beta\in[0,1)$.
Define $J:=\tfrac1N\mathbf{1}\mathbf{1}^\top$ and $\tilde{\mathbf Z}^k:=(I-J)\mathbf Z^k$.
For $k\ge 1$ define the EMA increment $\mathbf V^{k}:=\mathbf M^{k}-\mathbf M^{k-1}$ and its disagreement $\tilde{\mathbf V}^k:=(I-J)\mathbf V^k$.
Then $\mathbf V^1=0$ and for all $k\ge 1$,
\begin{equation}
\label{eq:sudamuon-V-rec}
\tilde{\mathbf V}^{k+1} 
\;=\; \beta\,\tilde{\mathbf V}^{k} + (1-\beta)\,\bigl(\tilde{\mathbf G}^{k}-\tilde{\mathbf G}^{k-1}\bigr),
\qquad \tilde{\mathbf G}^k:=(I-J)\mathbf G^k.
\end{equation}
Consequently, for any $\eta>0$ and all $k\ge 1$,
\begin{equation}
\label{eq:sudamuon-V-sq-rec}
\|\tilde{\mathbf V}^{k+1}\|_F^2
\;\le\; (1+\eta)\beta^2\,\|\tilde{\mathbf V}^{k}\|_F^2
+ (1+1/\eta)(1-\beta)^2\,\|\tilde{\mathbf G}^{k}-\tilde{\mathbf G}^{k-1}\|_F^2.
\end{equation}
In particular, if there exists a constant $B_G$ such that
$\sup_{k\ge 1}\mathbb{E}\|\tilde{\mathbf G}^{k}-\tilde{\mathbf G}^{k-1}\|_F^2\le B_G^2$,
then
\begin{equation}
\label{eq:sudamuon-V-uniform}
\sup_{k\ge 1}\mathbb{E}\|\tilde{\mathbf M}^{k}-\tilde{\mathbf M}^{k-1}\|_F^2
\;=\;\sup_{k\ge 1}\mathbb{E}\|\tilde{\mathbf V}^{k}\|_F^2
\;\le\;
\frac{2(1+\beta^2)}{(1+\beta)^2}\,B_G^2
\;\le\; 2\,B_G^2.
\end{equation}
\end{lemma}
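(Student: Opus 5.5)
The plan has three parts: derive the exact recursion for $\tilde{\mathbf V}$ from the EMA update, square it with Young's inequality, and then unroll in expectation using the uniform forcing bound $B_G^2$.

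\textbf{Base case and linear recursion.} For $\mathbf V^1$, the EMA update with $\mathbf M^0=\mathbf G^0$ gives
\[
\mathbf M^1=\beta\mathbf G^0+(1-\beta)\mathbf G^0=\mathbf G^0=\mathbf M^0,
\]
so $\mathbf V^1=0$. For $k\ge1$, subtracting the EMA updates at steps $k$ and $k-1$ gives
\[
\mathbf V^{k+1}=\mathbf M^{k+1}-\mathbf M^k=\beta(\mathbf M^k-\mathbf M^{k-1})+(1-\beta)(\mathbf G^k-\mathbf G^{k-1}).
\]
Since $I-J$ is linear and commutes with scalar multiplication, left-multiplying by $(I-J)$ yields \eqref{eq:sudamuon-V-rec}. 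The squared inequality \eqref{eq:sudamuon-V-sq-rec} then follows from $\|U+V\|_F^2\le(1+\eta)\|U\|_F^2+(1+1/\eta)\|V\|_F^2$ with $U=\beta\tilde{\mathbf V}^k$ and $V=(1-\beta)(\tilde{\mathbf G}^k-\tilde{\mathbf G}^{k-1})$.

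\textbf{Uniform bound.} The natural choice is $\eta$ with $(1+\eta)\beta^2<1$, say $\eta=(1-\beta)/\beta$ when $\beta>0$, which gives $(1+\eta)\beta^2=\beta$. Then $1+1/\eta=1/(1-\beta)$, so the forcing coefficient is $(1-\beta)$. Taking expectations and unrolling from $\tilde{\mathbf V}^1=0$:
\[
\mathbb E\|\tilde{\mathbf V}^{k}\|_F^2\le (1-\beta)B_G^2\sum_{j\ge0}\beta^j=B_G^2 .
\]
This already gives the final bound $\le 2B_G^2$. The case $\beta=0$ is trivial, since then $\tilde{\mathbf V}^{k+1}=\tilde{\mathbf G}^k-\tilde{\mathbf G}^{k-1}$ directly.

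\textbf{Main obstacle.} The sharper intermediate constant $\frac{2(1+\beta^2)}{(1+\beta)^2}$ is where I expect the difficulty. I would try an exact expansion: unroll linearly, $\tilde{\mathbf V}^{k+1}=(1-\beta)\sum_{t=1}^{k}\beta^{k-t}(\tilde{\mathbf G}^t-\tilde{\mathbf G}^{t-1})$. The next step would be to split the $\mathbf G$-differences into gradient differences and noise differences. Summation by parts on the noise part telescopes the differences into single noise terms with weights $\beta^{k-t}-\beta^{k-t+1}$, and martingale orthogonality would then produce $(1+\beta^2)/(1+\beta)^2$-type factors.

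This refined route needs more structure than the hypothesis $B_G$ alone supplies. If it does not close cleanly, I would instead note that $\frac{2(1+\beta^2)}{(1+\beta)^2}\ge1$ for $\beta\in[0,1)$, so the Young-based bound $B_G^2$ established above already implies the middle inequality of \eqref{eq:sudamuon-V-uniform}. That is sufficient for the stated lemma. The check reduces to $2+2\beta^2\ge1+2\beta+\beta^2$, i.e. $(1-\beta)^2\ge0$.
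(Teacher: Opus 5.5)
Your argument is correct and follows the paper's route: you derive the increment recursion from $\mathbf M^1=\mathbf M^0$, apply Young's inequality, and unroll from $\tilde{\mathbf V}^1=0$. The only difference is that the paper takes $\eta=\frac{1-\beta^2}{2\beta^2}$, with contraction factor $\frac{1+\beta^2}{2}$, and this produces exactly $\frac{2(1+\beta^2)}{(1+\beta)^2}$. Your $\eta=\frac{1-\beta}{\beta}$ is the optimal choice and gives the stronger bound $B_G^2$. So the paper's constant is weaker, not sharper, than yours, and the summation-by-parts detour in your ``main obstacle'' paragraph is unnecessary. Your closing observation $\frac{2(1+\beta^2)}{(1+\beta)^2}\ge 1$, together with the trivial $\frac{2(1+\beta^2)}{(1+\beta)^2}\le 2$, completes the lemma.
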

\begin{proof}
\medskip\noindent\textbf{Step 1: derive the increment recursion.}
For $k\ge 1$, subtract the EMA updates at times $k$ and $k-1$:
\[
\mathbf M^{k+1}-\mathbf M^{k}
= \beta(\mathbf M^{k}-\mathbf M^{k-1}) + (1-\beta)(\mathbf G^{k}-\mathbf G^{k-1}).
\]
This is exactly $\mathbf V^{k+1}=\beta\mathbf V^k+(1-\beta)(\mathbf G^k-\mathbf G^{k-1})$.
Applying $(I-J)$ to both sides gives~\eqref{eq:sudamuon-V-rec}.
Moreover, since $\mathbf M^0=\mathbf G^0$, we have $\mathbf M^1=\beta\mathbf M^0+(1-\beta)\mathbf G^0=\mathbf M^0$ and hence $\mathbf V^1=\mathbf M^1-\mathbf M^0=0$.

\medskip\noindent\textbf{Step 2: square and apply Young's inequality.}
From~\eqref{eq:sudamuon-V-rec} and $\|U+V\|_F^2\le (1+\eta)\|U\|_F^2+(1+1/\eta)\|V\|_F^2$,
with $U=\beta\tilde{\mathbf V}^k$ and $V=(1-\beta)(\tilde{\mathbf G}^{k}-\tilde{\mathbf G}^{k-1})$, we get~\eqref{eq:sudamuon-V-sq-rec}.

\medskip\noindent\textbf{Step 3: uniform bound under a uniform forcing bound.}
Assume $\sup_{k\ge 1}\mathbb{E}\|\tilde{\mathbf G}^{k}-\tilde{\mathbf G}^{k-1}\|_F^2\le B_G^2$.
For $\beta\in(0,1)$, pick $\eta:=\tfrac{1-\beta^2}{2\beta^2}$ so that $(1+\eta)\beta^2=\tfrac{1+\beta^2}{2}=:q\in(0,1)$ and $1+1/\eta=\tfrac{1+\beta^2}{1-\beta^2}$.
Taking expectations in~\eqref{eq:sudamuon-V-sq-rec} gives, for all $k\ge 1$,
\[
\mathbb{E}\|\tilde{\mathbf V}^{k+1}\|_F^2
\le q\,\mathbb{E}\|\tilde{\mathbf V}^{k}\|_F^2 + (1-\beta)^2\,\frac{1+\beta^2}{1-\beta^2}\,B_G^2.
\]
Unrolling this recursion and using $\tilde{\mathbf V}^1=0$ yields, for all $k\ge 1$,
\[
\mathbb{E}\|\tilde{\mathbf V}^{k+1}\|_F^2
\le (1-\beta)^2\,\frac{1+\beta^2}{1-\beta^2}\,\sum_{t=0}^{k-1} q^{t}\,B_G^2
\le (1-\beta)^2\,\frac{1+\beta^2}{1-\beta^2}\,\frac{1}{1-q}\,B_G^2.
\]
Since $1-q=\tfrac{1-\beta^2}{2}$, the last display becomes
\[
\mathbb{E}\|\tilde{\mathbf V}^{k+1}\|_F^2
\le \frac{2(1+\beta^2)(1-\beta)^2}{(1-\beta^2)^2}\,B_G^2
= \frac{2(1+\beta^2)}{(1+\beta)^2}\,B_G^2.
\]
Taking the supremum over $k$ gives~\eqref{eq:sudamuon-V-uniform} for $\beta\in(0,1)$.
For $\beta=0$, $\mathbf V^1=0$ and \eqref{eq:sudamuon-V-rec} reduces to $\tilde{\mathbf V}^{k+1}=\tilde{\mathbf G}^{k}-\tilde{\mathbf G}^{k-1}$, hence
$\sup_{k\ge 1}\mathbb{E}\|\tilde{\mathbf V}^{k}\|_F^2\le B_G^2\le 2B_G^2$.
\end{proof}

\begin{lemma}[A uniform bound on the gradient-tracking disagreement]
\label{lem:sudamuon-gt-disagreement-uniform}
Assume Assumption~\ref{assump:network-suda}(1) holds.
Let $\mathbf H^{k+1}=W\bigl(\mathbf H^k+\mathbf M^{k+1}-\mathbf M^k\bigr)$ as in~\eqref{eq:sudamuon-framework}.
Let $\lambda:=\rho(W-J)\in(0,1)$ and define $\tilde{\mathbf Z}^k:=(I-J)\mathbf Z^k$.
Assume that there exists a constant $B_M$ such that
\begin{equation}
\label{eq:sudamuon-BM-assumption}
\sup_{k\ge 0}\mathbb{E}\|\tilde{\mathbf M}^{k+1}-\tilde{\mathbf M}^{k}\|_F^2\le B_M^2.
\end{equation}
Then for all $k\ge 0$,
\begin{equation}
\label{eq:sudamuon-tildeH-uniform}
\mathbb{E}\|\tilde{\mathbf H}^{k+1}\|_F^2
\le \Big(\frac{1+\lambda^2}{2}\Big)^{k+1}\,\mathbb{E}\|\tilde{\mathbf H}^{0}\|_F^2
+ \frac{2\lambda^2(1+\lambda^2)}{(1-\lambda^2)^2}\,B_M^2.
\end{equation}
In particular,
\begin{equation}
\label{eq:sudamuon-tildeH-sup}
\sup_{k\ge 0}\mathbb{E}\|\tilde{\mathbf H}^{k}\|_F^2
\le \mathbb{E}\|\tilde{\mathbf H}^{0}\|_F^2
+ \frac{2\lambda^2(1+\lambda^2)}{(1-\lambda^2)^2}\,B_M^2.
\end{equation}
\end{lemma}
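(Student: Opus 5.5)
The approach is the same Young's-inequality-and-unrolling argument used in Lemma~\ref{lem:sudamuon-ema-increment-disagreement}, now applied to the tracking recursion of Lemma~\ref{lem:sudamuon-gt-disagreement}. That lemma gives, for any $\eta>0$,
\[
\|\tilde{\mathbf H}^{k+1}\|_F^2 \le (1+\eta)\lambda^2\|\tilde{\mathbf H}^{k}\|_F^2+(1+1/\eta)\lambda^2\|\tilde{\mathbf M}^{k+1}-\tilde{\mathbf M}^{k}\|_F^2 .
\]
I will choose $\eta:=\tfrac{1-\lambda^2}{2\lambda^2}>0$, which is admissible because $\lambda\in(0,1)$. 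With this choice the contraction factor is $(1+\eta)\lambda^2=\tfrac{1+\lambda^2}{2}=:q\in(0,1)$, and the forcing coefficient is $1+1/\eta=\tfrac{1+\lambda^2}{1-\lambda^2}$.

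Next I take full expectations, which is legitimate since the inequality holds pointwise. Invoking the uniform bound~\eqref{eq:sudamuon-BM-assumption} then gives the scalar recursion
\[
a_{k+1}\le q\,a_k + c, \qquad a_k:=\mathbb{E}\|\tilde{\mathbf H}^k\|_F^2,\quad c:=\lambda^2\tfrac{1+\lambda^2}{1-\lambda^2}B_M^2 .
\]
Unrolling from $k$ down to $0$ yields
\[
a_{k+1}\le q^{k+1}a_0 + c\sum_{t=0}^{k}q^t \le q^{k+1}a_0+\frac{c}{1-q}.
\]
Since $1-q=\tfrac{1-\lambda^2}{2}$, the constant is $\tfrac{c}{1-q}=\tfrac{2\lambda^2(1+\lambda^2)}{(1-\lambda^2)^2}B_M^2$. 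This is exactly~\eqref{eq:sudamuon-tildeH-uniform}.

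For~\eqref{eq:sudamuon-tildeH-sup}, I use $q^{k+1}\le 1$ to bound every $k\ge1$ term, and treat $k=0$ separately. The $k=0$ case is trivial because the additive constant is nonnegative. Taking the supremum over $k$ completes the argument.

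There is no genuine obstacle. The only delicate point is choosing $\eta$ so that the contraction factor stays strictly below one while the forcing constant comes out in the stated form. Two small side remarks: $\tilde{\mathbf H}^0=\tilde{\mathbf M}^0$ by the initialization, although the statement does not need this; and the assumption~\eqref{eq:sudamuon-BM-assumption} is applied at every index $k\ge0$, including $k=0$, where $\mathbf M^1-\mathbf M^0=0$.
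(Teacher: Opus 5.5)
Your proposal is correct and follows the paper's proof essentially line by line. It makes the same choice $\eta=\tfrac{1-\lambda^2}{2\lambda^2}$ in Lemma~\ref{lem:sudamuon-gt-disagreement}, obtains the same contraction factor $q=\tfrac{1+\lambda^2}{2}$, and unrolls the same geometric series using $1/(1-q)=2/(1-\lambda^2)$; your separate handling of $k=0$ in the supremum bound is a small bit of extra care the paper leaves implicit.
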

\begin{proof}
From Lemma~\ref{lem:sudamuon-gt-disagreement}, for any $\eta>0$,
\[
\|\tilde{\mathbf H}^{k+1}\|_F^2
\le (1+\eta)\lambda^2\,\|\tilde{\mathbf H}^{k}\|_F^2
+ (1+1/\eta)\lambda^2\,\|\tilde{\mathbf M}^{k+1}-\tilde{\mathbf M}^{k}\|_F^2.
\]
Choose $\eta:=\tfrac{1-\lambda^2}{2\lambda^2}$ so that $(1+\eta)\lambda^2=\tfrac{1+\lambda^2}{2}=:q\in(0,1)$ and
$1+1/\eta = 1 + \tfrac{2\lambda^2}{1-\lambda^2}=\tfrac{1+\lambda^2}{1-\lambda^2}$.
Taking expectations and using~\eqref{eq:sudamuon-BM-assumption} yields
\[
\mathbb{E}\|\tilde{\mathbf H}^{k+1}\|_F^2
\le q\,\mathbb{E}\|\tilde{\mathbf H}^{k}\|_F^2
+ \lambda^2\,\frac{1+\lambda^2}{1-\lambda^2}\,B_M^2.
\]
Unrolling this linear recursion gives
\[
\mathbb{E}\|\tilde{\mathbf H}^{k+1}\|_F^2
\le q^{k+1}\,\mathbb{E}\|\tilde{\mathbf H}^{0}\|_F^2
+ \lambda^2\,\frac{1+\lambda^2}{1-\lambda^2}\,\sum_{t=0}^{k} q^{t}\,B_M^2.
\]
Since $\sum_{t=0}^{k}q^{t}\le \tfrac{1}{1-q}=\tfrac{2}{1-\lambda^2}$, we get
\[
\mathbb{E}\|\tilde{\mathbf H}^{k+1}\|_F^2
\le q^{k+1}\,\mathbb{E}\|\tilde{\mathbf H}^{0}\|_F^2
+ \lambda^2\,\frac{1+\lambda^2}{1-\lambda^2}\,\frac{2}{1-\lambda^2}\,B_M^2
= q^{k+1}\,\mathbb{E}\|\tilde{\mathbf H}^{0}\|_F^2
+ \frac{2\lambda^2(1+\lambda^2)}{(1-\lambda^2)^2}\,B_M^2,
\]
which is~\eqref{eq:sudamuon-tildeH-uniform}.
Taking the supremum over $k$ yields~\eqref{eq:sudamuon-tildeH-sup}.
\end{proof}

\begin{lemma}[An explicit bound on the EMA increment disagreement]
\label{lem:sudamuon-ema-increment-disagreement-explicit}
Assume Assumptions~\ref{assump:smooth} and~\ref{assump:stoch}.
Let the iterates be generated by~\eqref{eq:sudamuon-framework} with EMA parameter $\beta\in[0,1)$ and initialization $\mathbf M^0=\mathbf G^0$ (as in Algorithm~\ref{alg:suda-muon}).
Assume further that Lemma~\ref{lem:sudamuon-stacked-step-bound} holds so that there exists $B_X>0$ with
\begin{equation}
\label{eq:sudamuon-step-bound-BX}
\|\mathbf X^{k}-\mathbf X^{k-1}\|_F\le B_X,\qquad \forall k\ge 1.
\end{equation}
Define
\begin{equation}
\label{eq:sudamuon-BG-def}
B_G^2 \;:=\; 2L_*^2 B_X^2 + 8N\sigma^2.
\end{equation}
Then the forcing term in Lemma~\ref{lem:sudamuon-gt-disagreement} admits the uniform bound
\begin{equation}
\label{eq:sudamuon-BM-explicit}
\sup_{k\ge 0}\mathbb{E}\|\tilde{\mathbf M}^{k+1}-\tilde{\mathbf M}^{k}\|_F^2
\;\le\; 2\,B_G^2.
\end{equation}
\end{lemma}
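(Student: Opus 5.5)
The plan is to chain the two forcing lemmas already established, taking care with the index shift and with the $k=0$ term. The quantity to bound is $\tilde{\mathbf M}^{k+1}-\tilde{\mathbf M}^{k}=\tilde{\mathbf V}^{k+1}$ in the notation of Lemma~\ref{lem:sudamuon-ema-increment-disagreement}. A supremum over $k\ge 0$ of this increment is therefore the same as $\sup_{k\ge 1}\mathbb{E}\|\tilde{\mathbf V}^{k}\|_F^2$, which is exactly the object controlled in~\eqref{eq:sudamuon-V-uniform}.

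\textbf{Step 1.} First I produce the forcing constant. Under the step bound~\eqref{eq:sudamuon-step-bound-BX}, Lemma~\ref{lem:sudamuon-grad-diff-forcing}, in its uniform form~\eqref{eq:sudamuon-tildeG-diff-uniform}, gives
\[
\sup_{k\ge 1}\mathbb{E}\|\tilde{\mathbf G}^k-\tilde{\mathbf G}^{k-1}\|_F^2\le 2L_*^2B_X^2+8N\sigma^2 = B_G^2,
\]
with $B_G$ as defined in~\eqref{eq:sudamuon-BG-def}. That lemma needs only Assumptions~\ref{assump:smooth} and~\ref{assump:stoch}, the uniform step bound, and the unconditional variance bound obtained from the tower property. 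All of these are available here.

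\textbf{Step 2.} Next I feed $B_G$ into Lemma~\ref{lem:sudamuon-ema-increment-disagreement}. Its hypotheses are the EMA recursion with $\mathbf M^0=\mathbf G^0$ (so $\mathbf V^1=0$) and the uniform forcing bound from Step~1. Its conclusion~\eqref{eq:sudamuon-V-uniform} yields $\sup_{k\ge1}\mathbb{E}\|\tilde{\mathbf V}^k\|_F^2\le \tfrac{2(1+\beta^2)}{(1+\beta)^2}B_G^2\le 2B_G^2$. The last inequality holds because $1+\beta^2\le(1+\beta)^2$ for $\beta\ge 0$. The degenerate case $\beta=0$ is already handled inside that lemma.

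\textbf{Step 3.} Finally I reindex, setting $k\mapsto k+1$, to obtain~\eqref{eq:sudamuon-BM-explicit}. The $k=0$ term is trivial, since $\tilde{\mathbf M}^1-\tilde{\mathbf M}^0=\tilde{\mathbf V}^1=0$.

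The only point needing care, which I expect to be the main (mild) obstacle, is the expectation inside the unrolled recursion in Lemma~\ref{lem:sudamuon-ema-increment-disagreement}. It is applied to a pathwise Young inequality and then to a bound on expectations, so I will check that taking expectations commutes with the linear unrolling. It does, since every coefficient is deterministic.
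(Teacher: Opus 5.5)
Your proposal is correct and follows the paper's proof exactly: you feed the uniform forcing bound from Lemma~\ref{lem:sudamuon-grad-diff-forcing} into Lemma~\ref{lem:sudamuon-ema-increment-disagreement}, then reindex $k\mapsto k+1$. Your extra checks are valid but already covered by that lemma: the $k=0$ term vanishing because $\tilde{\mathbf V}^1=0$, the inequality $1+\beta^2\le(1+\beta)^2$, and the fact that the recursion coefficients are deterministic.
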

\begin{proof}
Lemma~\ref{lem:sudamuon-grad-diff-forcing} and~\eqref{eq:sudamuon-step-bound-BX} imply
$\sup_{k\ge 1}\mathbb{E}\|\tilde{\mathbf G}^{k}-\tilde{\mathbf G}^{k-1}\|_F^2\le B_G^2$.
Then Lemma~\ref{lem:sudamuon-ema-increment-disagreement} yields
$\sup_{k\ge 1}\mathbb{E}\|\tilde{\mathbf M}^{k}-\tilde{\mathbf M}^{k-1}\|_F^2\le 2B_G^2$.
Renaming the index ($k\mapsto k+1$) gives~\eqref{eq:sudamuon-BM-explicit}.
\end{proof}

%% ================================================================
\subsubsection*{Proofs of the Main Convergence Results}
%% ================================================================

\begin{proposition}[A closed stationarity bound]
\label{prop:sudamuon-closed-stationarity}
Assume Assumptions~\ref{assump:network-suda}--\ref{assump:muon}.
Let the iterates be generated by~\eqref{eq:sudamuon-framework} with initialization $H_i^0=M_i^0=G_i^0$ and $Y_i^0=0$ for all $i\in[N]$.
Assume additionally that $\mathbf{Y}^0=0$ and the stability condition in Lemma~\ref{lem:sudamuon-suda-consensus-bounded-input} hold (so that the constants $\gamma\in(0,1)$, $v_1,v_2\ge 1$, and $\lambda_a\ge 0$ in Lemma~\ref{lem:sudamuon-suda-consensus-bounded-input} exist).
Let $K\ge 1$.
Define $\Delta_0:=\mathbb{E}[f(\bar X^0)]-f_{\inf}$ and $r_{\max}:=\min\{m,n\}$.
Then
\begin{equation}
\label{eq:sudamuon-closed-stationarity}
\begin{split}
\frac{1}{K}\sum_{k=0}^{K-1}\mathbb{E}\big[\|\nabla f(\bar X^k)\|_*\big]
\;\le\;&\;
\frac{\Delta_0}{\alpha K}
+\frac{L_*\alpha}{2}
+2\,\mathcal T_{\rm cons}(K)
+2\,\mathcal T_{\rm drift}
+2\,\mathcal T_{\rm noise}(K)
+2\,\mathcal T_{\rm gt},
\end{split}
\end{equation}
where
\begin{align}
\mathcal T_{\rm cons}(K)
&:= \frac{L_*}{K\sqrt N}\sum_{k=0}^{K-1}\mathbb{E}\|\tilde{\mathbf X}^k\|_F
\le \frac{L_*v_1v_2}{K\sqrt N}\,\frac{\|\tilde{\mathbf X}^0\|_F}{1-\gamma}
+ \frac{2L_*\alpha v_1v_2\lambda_a}{1-\gamma}\,\sqrt{r_{\max}},
\label{eq:sudamuon-Tcons}
\\
\mathcal T_{\rm drift}
&:= \sup_{k\ge 0}\mathbb{E}\|D^k\|_*
\le \frac{\beta L_*}{1-\beta}
\Bigg[
\alpha
+\frac{4v_1v_2}{\sqrt N}\,\|\tilde{\mathbf X}^0\|_F
+\frac{8\alpha v_1v_2\lambda_a}{1-\gamma}\,\sqrt{r_{\max}}
\Bigg],
\label{eq:sudamuon-Tdrift}
\\
\mathcal T_{\rm noise}(K)
&:= \frac{1}{K}\sum_{k=0}^{K-1}\mathbb{E}\|\bar C^{k+1}-\bar M^{k+1}\|_*
\notag\\
&\quad\le \sqrt{r_{\max}}\Bigg[
\sqrt{\frac{1-\beta}{1+\beta}}\,\frac{\sigma}{\sqrt N}
+\frac{\sigma}{K\sqrt N}\,\frac{1}{1-\beta}
\Bigg],
\label{eq:sudamuon-Tnoise}
\\
\mathcal T_{\rm gt}
&:= \sqrt{\frac{r_{\max}}{N}}\,\sup_{k\ge 0}\mathbb{E}\|\tilde{\mathbf H}^{k+1}\|_F
\le \sqrt{\frac{r_{\max}}{N}}
\sqrt{\mathbb{E}\|\tilde{\mathbf H}^{0}\|_F^2
+ \frac{4\lambda^2(1+\lambda^2)}{(1-\lambda^2)^2}\,B_G^2},
\label{eq:sudamuon-Tgt}
\end{align}
with $\lambda:=\rho(W-J)\in(0,1)$ and
\begin{equation}
\label{eq:sudamuon-BG-in-prop}
B_G^2:=2L_*^2 B_X^2+8N\sigma^2,
\qquad
B_X:=\alpha\sqrt{Nr_{\max}}+2v_1v_2\|\tilde{\mathbf X}^0\|_F+\frac{4\alpha v_1v_2\lambda_a}{1-\gamma}\,\sqrt{Nr_{\max}}.
\end{equation}
\end{proposition}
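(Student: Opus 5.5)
The plan is to start from Proposition~\ref{prop:telescoping-descent} and bound its tracking-error remainder with the four-term split of Lemma~\ref{lem:sudamuon-tracking-error-decomp}. Taking expectations in~\eqref{eq:tracking-error-average-bound}, averaging over $k=0,\dots,K-1$, and using the conversions~\eqref{eq:H-disagreement-nuc-vs-F}--\eqref{eq:barC-barM-nuc-vs-F}, the remainder is at most
\[
\frac{2}{K}\sum_{k=0}^{K-1}\Big(\tfrac{L_*}{\sqrt N}\mathbb{E}\|\tilde{\mathbf X}^k\|_F+\mathbb{E}\|D^k\|_*+\mathbb{E}\|\bar C^{k+1}-\bar M^{k+1}\|_*+\sqrt{\tfrac{r_{\max}}{N}}\,\mathbb{E}\|\tilde{\mathbf H}^{k+1}\|_F\Big).
\]
Then I bound the time averages of the $D^k$ and $\tilde{\mathbf H}^{k+1}$ terms by their suprema, which gives exactly $2(\mathcal T_{\rm cons}+\mathcal T_{\rm drift}+\mathcal T_{\rm noise}+\mathcal T_{\rm gt})$. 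It remains to verify the four stated bounds.

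\textbf{Consensus term.} I use Lemma~\ref{lem:sudamuon-suda-consensus-bounded-input} with Muon inputs, which holds pathwise. Summing $v_1v_2\gamma^k(1+\|B^2\|)\|\tilde{\mathbf X}^0\|_F$ over $k$ gives a geometric factor $1/(1-\gamma)$. The $\alpha$-term is constant in $k$ and contributes $\frac{L_*}{\sqrt N}\cdot\frac{2\alpha v_1v_2\lambda_a}{1-\gamma}\sqrt{Nr_{\max}}$, where the $\sqrt N$ cancels. One point needs care: the stated bound on $\mathcal T_{\rm cons}$ omits the factor $(1+\|B^2\|)$. I would either absorb it into $v_1v_2$, noting that the lemma's constants are only defined up to such a factor, or interpret $\|\tilde{\mathbf X}^0\|_F$ loosely. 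Typically $\tilde{\mathbf X}^0=0$ under common initialization, which makes the issue moot, and I would remark on this.

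\textbf{Drift term.} This is immediate from~\eqref{eq:sudamuon-Dk-explicit}, since that bound is deterministic.

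\textbf{Noise term.} I apply~\eqref{eq:barC-barM-nuc-vs-F} and then~\eqref{eq:ema-stoch-error-average} at index $k+1$. Averaging $\beta^{k+1}\sigma/\sqrt N$ over $k$ gives at most $\frac{\sigma}{K\sqrt N}\cdot\frac{1}{1-\beta}$, which yields~\eqref{eq:sudamuon-Tnoise}.

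\textbf{Gradient-tracking term.} I chain three earlier results. Lemma~\ref{lem:sudamuon-stacked-step-bound} gives the uniform step bound $B_X$ in~\eqref{eq:sudamuon-BG-in-prop}. Lemma~\ref{lem:sudamuon-ema-increment-disagreement-explicit} then gives $B_M^2=2B_G^2$. Finally, Lemma~\ref{lem:sudamuon-gt-disagreement-uniform}, specifically~\eqref{eq:sudamuon-tildeH-sup}, gives
\[
\sup_k\mathbb{E}\|\tilde{\mathbf H}^{k+1}\|_F^2\le \mathbb{E}\|\tilde{\mathbf H}^0\|_F^2+\frac{4\lambda^2(1+\lambda^2)}{(1-\lambda^2)^2}B_G^2 .
\]
Jensen's inequality, $\mathbb{E}\|\cdot\|_F\le\sqrt{\mathbb{E}\|\cdot\|_F^2}$, then yields~\eqref{eq:sudamuon-Tgt}.

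\textbf{Main obstacle.} The delicate step is the index bookkeeping in this chain. Lemma~\ref{lem:sudamuon-ema-increment-disagreement} supplies the bound for $k\ge1$ and uses $\tilde{\mathbf V}^1=0$. Lemma~\ref{lem:sudamuon-gt-disagreement-uniform} instead requires the supremum over all $k\ge0$ of $\mathbb{E}\|\tilde{\mathbf M}^{k+1}-\tilde{\mathbf M}^k\|_F^2$. These are reconciled by the index shift $k\mapsto k+1$, since $\mathbf M^1-\mathbf M^0=0$. A second point is that $B_X$ must be deterministic for Lemma~\ref{lem:sudamuon-grad-diff-forcing}. This holds because the SUDA consensus bound is pathwise: the Muon directions are uniformly bounded.
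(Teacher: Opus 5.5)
Your proposal is correct and follows the paper's proof essentially step for step. You use telescoping descent, then the four-term split of Lemma~\ref{lem:sudamuon-tracking-error-decomp}, then suprema for the drift and tracking terms. For $\mathcal T_{\rm gt}$ you chain Lemma~\ref{lem:sudamuon-stacked-step-bound} $\to$ Lemma~\ref{lem:sudamuon-ema-increment-disagreement-explicit} $\to$ Lemma~\ref{lem:sudamuon-gt-disagreement-uniform} and finish with Jensen.

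The missing $(1+\|B^2\|)$ factor you point out is a genuine discrepancy, and the paper's proof passes over it silently. Your fix works: replace $v_1$ by $v_1(1+\|B^2\|)$ once and for all. This is legitimate because Lemma~\ref{lem:sudamuon-suda-consensus-bounded-input} only asserts that such constants exist and its bound is monotone in $v_1$. Note that the factor-free form is also what the paper uses in \eqref{eq:sudamuon-Dk-explicit} and in $B_X$, so the same rescaling is needed for your drift and $\mathcal T_{\rm gt}$ steps, not just for $\mathcal T_{\rm cons}$.
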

\begin{proof}
Start from Proposition~\ref{prop:telescoping-descent}:
\[
\frac{1}{K}\sum_{k=0}^{K-1}\mathbb{E}\|\nabla f(\bar X^k)\|_*
\le \frac{\Delta_0}{\alpha K}+\frac{L_*\alpha}{2}
+\frac{2}{K}\sum_{k=0}^{K-1}\mathbb{E}\Big[\frac{1}{N}\sum_{i=1}^N\|\nabla f(\bar X^k)-H_i^{k+1}\|_*\Big].
\]
Apply Lemma~\ref{lem:sudamuon-tracking-error-decomp} to the averaged tracking error inside the expectation and sum over $k$.
This yields the decomposition into four contributions.

For the consensus term, apply Lemma~\ref{lem:sudamuon-suda-consensus-bounded-input} (with Muon directions) and sum the geometric series to obtain~\eqref{eq:sudamuon-Tcons}.

For the drift term, use the uniform bound from Lemma~\ref{lem:sudamuon-ema-drift-average}, giving~\eqref{eq:sudamuon-Tdrift}.

For the stochastic EMA term, use Lemma~\ref{lem:sudamuon-ema-stoch-error} and the nuclear-to-Frobenius conversion \eqref{eq:barC-barM-nuc-vs-F} from Lemma~\ref{lem:sudamuon-tracking-error-decomp}; averaging $\beta^{k+1}$ over $k=0,\dots,K-1$ yields~\eqref{eq:sudamuon-Tnoise}.

For the gradient-tracking disagreement term, use \eqref{eq:H-disagreement-nuc-vs-F} from Lemma~\ref{lem:sudamuon-tracking-error-decomp} and Jensen/Cauchy--Schwarz:
$\mathbb{E}\|\tilde{\mathbf H}^{k+1}\|_F\le \sqrt{\mathbb{E}\|\tilde{\mathbf H}^{k+1}\|_F^2}$.
Then apply Lemma~\ref{lem:sudamuon-gt-disagreement-uniform} together with Lemma~\ref{lem:sudamuon-ema-increment-disagreement-explicit}.
Finally, invoke Lemma~\ref{lem:sudamuon-stacked-step-bound} to bound $B_X$ as stated in~\eqref{eq:sudamuon-BG-in-prop}.
Collecting terms yields~\eqref{eq:sudamuon-closed-stationarity}.
\end{proof}

\begin{lemma}[Averaged tracking error bound with $\sqrt{1-\beta}$ noise scaling]
\label{lem:sudamuon-avg-gt-error-decomposition}
Assume Assumptions~\ref{assump:network-suda}--\ref{assump:stoch}.
Let the iterates be generated by~\eqref{eq:sudamuon-framework} with initialization $H_i^0=M_i^0$ for all $i\in[N]$.
Define the network averages
\[
\bar X^k:=\frac{1}{N}\sum_{i=1}^N X_i^k,\qquad
\bar H^k:=\frac{1}{N}\sum_{i=1}^N H_i^k,\qquad
\bar M^k:=\frac{1}{N}\sum_{i=1}^N M_i^k.
\]
Define the (average) tracking error
\[
E_{\mathrm{gt}}^k\;:=\;\bar H^k-\nabla f(\bar X^k)\in\mathbb{R}^{m\times n}.
\]
Let
\[
\bar G^k:=\frac{1}{N}\sum_{i=1}^N G_i^k,\qquad
\bar g^k:=\frac{1}{N}\sum_{i=1}^N \nabla f_i(X_i^k),\qquad
\bar\Delta^k:=\bar G^k-\bar g^k.
\]
Then for every integer $K\ge 1$ we have
\begin{equation}
\label{eq:sudamuon-avg-gt-error-decomp-main}
\begin{split}
\frac{1}{K}\sum_{k=0}^{K-1}\mathbb{E}\,\|E_{\mathrm{gt}}^{k+1}\|_F
\;\le\;&\;
\underbrace{\sqrt{\tfrac{1-\beta}{1+\beta}}\,\tfrac{\sigma}{\sqrt{N}}}_{\text{stochastic noise }\sim \frac{\sigma}{\sqrt N}\sqrt{1-\beta}}
\;+
\underbrace{\frac{L_*}{1-\beta}\cdot \frac{1}{K}\sum_{k=0}^{K-1}\mathbb{E}\,\|\bar X^{k+1}-\bar X^k\|_F}_{\text{drift }\sim \frac{L_*\alpha}{1-\beta}}
\\&\;+
\underbrace{\frac{1}{K}\sum_{k=0}^{K-1}\mathbb{E}\,\|\bar g^k-\nabla f(\bar X^k)\|_F}_{\text{consensus bias}}
\;+
\frac{1}{K}\sum_{k=0}^{K-1}\beta^{k+1}\,\mathbb{E}\,\|E_{\mathrm{gt}}^{0}\|_F.
\end{split}
\end{equation}
Moreover, under Assumption~\ref{assump:smooth}, the consensus bias can be bounded by
\begin{equation}
\label{eq:sudamuon-avg-gt-error-consensus-bias}
\|\bar g^k-\nabla f(\bar X^k)\|_F
\;\le\;
\frac{L_*}{\sqrt{N}}\,\|\tilde{\mathbf X}^k\|_F,
\qquad
\tilde{\mathbf X}^k:=(I-J)\mathbf X^k,\; J:=\tfrac{1}{N}\mathbf 1\mathbf 1^\top.
\end{equation}
\end{lemma}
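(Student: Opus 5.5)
The plan is to write a linear recursion for the averaged tracking error $E_{\mathrm{gt}}^k$ and unroll it. By Lemma~\ref{lem:sudamuon-gt-average-invariance}, $\bar H^k=\bar M^k$ for all $k$, so $E_{\mathrm{gt}}^k=\bar M^k-\nabla f(\bar X^k)$. Averaging the EMA update over agents gives $\bar M^{k+1}=\beta\bar M^k+(1-\beta)\bar G^k$. Adding and subtracting $\nabla f(\bar X^k)$ and $\bar g^k$ then yields the exact identity
\[
E_{\mathrm{gt}}^{k+1}=\beta E_{\mathrm{gt}}^k+(1-\beta)\bar\Delta^k+(1-\beta)\bigl(\bar g^k-\nabla f(\bar X^k)\bigr)+\bigl(\nabla f(\bar X^k)-\nabla f(\bar X^{k+1})\bigr).
\]
Unrolling from $k=0$ gives
\[
E_{\mathrm{gt}}^{k+1}=\beta^{k+1}E_{\mathrm{gt}}^0+(1-\beta)\sum_{t=0}^{k}\beta^{k-t}\bar\Delta^t+\sum_{t=0}^{k}\beta^{k-t}\Bigl[(1-\beta)\bigl(\bar g^t-\nabla f(\bar X^t)\bigr)+\bigl(\nabla f(\bar X^t)-\nabla f(\bar X^{t+1})\bigr)\Bigr].
\]
I would then take Frobenius norms and expectations and apply the triangle inequality, which splits the bound into four groups.

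\textbf{Noise group.} I would treat it exactly as in Step~2 of Lemma~\ref{lem:sudamuon-ema-stoch-error}. Each $\bar\Delta^t$ is a martingale difference with respect to $\mathcal F^{t}$ (it is measurable with respect to the next $\sigma$-field), so cross terms vanish in the second moment. Conditional independence across agents gives $\mathbb E\|\bar\Delta^t\|_F^2\le\sigma^2/N$, by \eqref{eq:variance-reduction-N}. Jensen then gives
\[
(1-\beta)\,\mathbb E\Bigl\|\sum_{t}\beta^{k-t}\bar\Delta^t\Bigr\|_F\le (1-\beta)\frac{\sigma}{\sqrt{N}\sqrt{1-\beta^2}}=\sqrt{\tfrac{1-\beta}{1+\beta}}\,\tfrac{\sigma}{\sqrt N}.
\]
This bound is uniform in $k$, so it survives averaging over $k$.

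\textbf{Drift group.} I would use $\|\cdot\|_F\le\|\cdot\|_*$ together with the smoothness bound \eqref{eq:f-smoothness} and $\|\cdot\|\le\|\cdot\|_F$. This gives $\|\nabla f(\bar X^t)-\nabla f(\bar X^{t+1})\|_F\le L_*\|\bar X^{t+1}-\bar X^t\|_F$.

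\textbf{Averaging the convolutions.} For the drift and consensus groups, after averaging over $k=0,\dots,K-1$ I would swap the order of summation:
\[
\frac1K\sum_{k=0}^{K-1}\sum_{t=0}^{k}\beta^{k-t}a_t\le\frac{1}{1-\beta}\cdot\frac1K\sum_{t=0}^{K-1}a_t
\qquad\text{for nonnegative } a_t.
\]
Applied to the drift sequence $a_t=L_*\mathbb E\|\bar X^{t+1}-\bar X^t\|_F$, this produces the factor $L_*/(1-\beta)$. Applied to the consensus sequence, the prefactor $(1-\beta)$ cancels the $1/(1-\beta)$, leaving the bare average of $\mathbb E\|\bar g^k-\nabla f(\bar X^k)\|_F$.

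\textbf{Initial-condition group.} The term $\beta^{k+1}\mathbb E\|E_{\mathrm{gt}}^0\|_F$ is kept as is.

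For the consensus-bias estimate \eqref{eq:sudamuon-avg-gt-error-consensus-bias}, I would apply Lemma~\ref{lem:sudamuon-grad-mismatch-consensus} and then use $\|\cdot\|_F\le\|\cdot\|_*$.

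The main obstacle is choosing the decomposition so that the consensus bias carries the prefactor $(1-\beta)$ while the drift does not. The bookkeeping that makes this work is to center at $\nabla f(\bar X^k)$ and absorb the mismatch $\nabla f(\bar X^k)-\nabla f(\bar X^{k+1})$ with coefficient $1$; this choice is what matches the stated constants. A secondary point to check is the martingale measurability in the noise group. Each $\bar\Delta^t$ depends on $X^t$, which is $\mathcal F^t$-measurable, so the orthogonality argument goes through unchanged.
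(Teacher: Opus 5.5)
Your proposal is correct and follows the paper's proof essentially step for step: the same one-step identity for $E_{\mathrm{gt}}^{k+1}$ (centered at $\nabla f(\bar X^k)$, with the drift carrying coefficient $1$), the same martingale-difference bound for the noise, and the same swap of summation for the drift and consensus-bias terms. The differences are only cosmetic: the paper splits $E_{\mathrm{gt}}^k=A^k+B^k$ into two auxiliary recursions instead of unrolling once, and it re-derives the consensus-bias bound directly in Frobenius norm rather than citing Lemma~\ref{lem:sudamuon-grad-mismatch-consensus} together with $\|\cdot\|_F\le\|\cdot\|_*$.
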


\begin{proof}
By Lemma~\ref{lem:sudamuon-gt-average-invariance}, $\bar H^k=\bar M^k$ for all $k\ge 0$.
Averaging the EMA update gives
\begin{equation}
\label{eq:sudamuon-barM-rec}
\bar M^{k+1}=\beta\bar M^k+(1-\beta)\bar G^k.
\end{equation}
Fix $k\ge 0$. Using $E_{\mathrm{gt}}^k=\bar M^k-\nabla f(\bar X^k)$ and~\eqref{eq:sudamuon-barM-rec},
\begin{align}
E_{\mathrm{gt}}^{k+1}
&=\bar M^{k+1}-\nabla f(\bar X^{k+1})
=\beta\bar M^k+(1-\beta)\bar G^k-\nabla f(\bar X^{k+1})\nonumber\\
&=\beta(\bar M^k-\nabla f(\bar X^k))
+\bigl(\nabla f(\bar X^k)-\nabla f(\bar X^{k+1})\bigr)
+(1-\beta)\bigl(\bar G^k-\nabla f(\bar X^k)\bigr)\nonumber\\
&=\beta E_{\mathrm{gt}}^{k}
+\underbrace{(1-\beta)\bar\Delta^k}_{\text{noise}}
+\underbrace{(1-\beta)(\bar g^k-\nabla f(\bar X^k))}_{\text{consensus bias}}
+\underbrace{\bigl(\nabla f(\bar X^k)-\nabla f(\bar X^{k+1})\bigr)}_{\text{drift}}.
\label{eq:sudamuon-Egt-rec}
\end{align}

\medskip\noindent\textbf{Step 1: decompose $E_{\mathrm{gt}}^k=A^k+B^k$.}
Define $A^0:=0$ and $B^0:=E_{\mathrm{gt}}^0$, and for $k\ge 0$ define
\[
A^{k+1}:=\beta A^k+(1-\beta)\bigl(\bar\Delta^k+(\bar g^k-\nabla f(\bar X^k))\bigr),
\qquad
B^{k+1}:=\beta B^k+\bigl(\nabla f(\bar X^k)-\nabla f(\bar X^{k+1})\bigr).
\]
Then~\eqref{eq:sudamuon-Egt-rec} implies $E_{\mathrm{gt}}^{k}=A^{k}+B^{k}$ for all $k$.

\medskip\noindent\textbf{Step 2: bound the stochastic part (noise).}
Unrolling the $A$-recursion and keeping only the noise contribution yields
\[
A_{\rm noise}^{k+1}=(1-\beta)\sum_{t=0}^{k}\beta^{k-t}\bar\Delta^t.
\]
By Assumption~\ref{assump:stoch}, conditional on $\mathcal F^t$ the noises are independent across agents and satisfy
$\mathbb{E}[\bar\Delta^t\mid \mathcal F^t]=0$ and
$\mathbb{E}[\|\bar\Delta^t\|_F^2\mid \mathcal F^t]\le \sigma^2/N$.
Therefore cross terms vanish, and
\[
\mathbb{E}\|A_{\rm noise}^{k+1}\|_F
\le \sqrt{\mathbb{E}\|A_{\rm noise}^{k+1}\|_F^2}
\le (1-\beta)\sqrt{\sum_{t=0}^{k}\beta^{2(k-t)}\cdot \frac{\sigma^2}{N}}
\le (1-\beta)\sqrt{\frac{1}{1-\beta^2}}\cdot \frac{\sigma}{\sqrt N}
=\sqrt{\tfrac{1-\beta}{1+\beta}}\,\frac{\sigma}{\sqrt N}.
\]

\medskip\noindent\textbf{Step 3: bound the drift part.}
Unrolling $B^{k+1}=\beta B^k+d^k$ with $d^k:=\nabla f(\bar X^k)-\nabla f(\bar X^{k+1})$ gives
\[
B^{k+1}=\beta^{k+1}B^0+\sum_{t=0}^{k}\beta^{k-t}d^t.
\]
Thus, by triangle inequality,
\[
\mathbb{E}\|B^{k+1}\|_F
\le \beta^{k+1}\,\mathbb{E}\|B^0\|_F
+\sum_{t=0}^{k}\beta^{k-t}\,\mathbb{E}\|d^t\|_F.
\]
Now average over $k=0,\dots,K-1$ and exchange the order of summation:
\[
\frac{1}{K}\sum_{k=0}^{K-1}\sum_{t=0}^{k}\beta^{k-t}\,\mathbb{E}\|d^t\|_F
=\frac{1}{K}\sum_{t=0}^{K-1}\mathbb{E}\|d^t\|_F\sum_{k=t}^{K-1}\beta^{k-t}
\le \frac{1}{1-\beta}\cdot \frac{1}{K}\sum_{t=0}^{K-1}\mathbb{E}\|d^t\|_F.
\]
Therefore,
\[
\frac{1}{K}\sum_{k=0}^{K-1}\mathbb{E}\|B^{k+1}\|_F
\le \frac{1}{K}\sum_{k=0}^{K-1}\beta^{k+1}\,\mathbb{E}\|B^0\|_F
+\frac{1}{1-\beta}\cdot \frac{1}{K}\sum_{t=0}^{K-1}\mathbb{E}\|d^t\|_F.
\]
Under Assumption~\ref{assump:smooth},
$\|d^t\|_F\le \|d^t\|_*\le L_*\|\bar X^{t+1}-\bar X^t\|$, and $\|\cdot\|\le \|\cdot\|_F$.
This yields the drift term in~\eqref{eq:sudamuon-avg-gt-error-decomp-main}.

\medskip\noindent\textbf{Step 4: bound the consensus bias.}
The consensus-bias contribution in $A^{k+1}$ is
\[
A_{\rm bias}^{k+1}=(1-\beta)\sum_{t=0}^{k}\beta^{k-t}\bigl(\bar g^t-\nabla f(\bar X^t)\bigr).
\]
By triangle inequality,
\[
\mathbb{E}\|A_{\rm bias}^{k+1}\|_F
\le (1-\beta)\sum_{t=0}^{k}\beta^{k-t}\,\mathbb{E}\|\bar g^t-\nabla f(\bar X^t)\|_F.
\]
Now average over $k=0,\dots,K-1$ and exchange summations:
\[
\frac{1}{K}\sum_{k=0}^{K-1}\mathbb{E}\|A_{\rm bias}^{k+1}\|_F
\le \frac{1}{K}\sum_{t=0}^{K-1}\mathbb{E}\|\bar g^t-\nabla f(\bar X^t)\|_F\cdot (1-\beta)\sum_{k=t}^{K-1}\beta^{k-t}
\le \frac{1}{K}\sum_{t=0}^{K-1}\mathbb{E}\|\bar g^t-\nabla f(\bar X^t)\|_F,
\]
where we used $(1-\beta)\sum_{s=0}^{\infty}\beta^s=1$.
This yields the bias term in~\eqref{eq:sudamuon-avg-gt-error-decomp-main}.

Combining Steps 2--4 and using $E_{\mathrm{gt}}^{k+1}=A^{k+1}+B^{k+1}$ proves~\eqref{eq:sudamuon-avg-gt-error-decomp-main}.
Finally,~\eqref{eq:sudamuon-avg-gt-error-consensus-bias} follows from the same argument as Lemma~\ref{lem:sudamuon-grad-mismatch-consensus}:
\[
\|\bar g^k-\nabla f(\bar X^k)\|_F
=\Big\|\frac{1}{N}\sum_{i=1}^N\bigl(\nabla f_i(X_i^k)-\nabla f_i(\bar X^k)\bigr)\Big\|_F
\le \frac{1}{N}\sum_{i=1}^N L_*\|X_i^k-\bar X^k\|_F
\le \frac{L_*}{\sqrt N}\|\tilde{\mathbf X}^k\|_F.
\]
\end{proof}

\begin{lemma}[Gradient-tracking disagreement bound (no heterogeneity term)]
\label{lem:sudamuon-gt-disagreement-O1mb}
Assume Assumptions~\ref{assump:network-suda}(1), \ref{assump:smooth}, and~\ref{assump:stoch}.
Let $(\mathbf X^k,\mathbf M^k,\mathbf H^k)$ be generated by~\eqref{eq:sudamuon-framework}.
Define the disagreement operator $\tilde{\mathbf Z}:=(I-J)\mathbf Z$ with $J:=\tfrac1N\mathbf 1\mathbf 1^\top$.
Let $\lambda:=\rho(W-J)\in(0,1)$.
Then, for every integer $K\ge 1$,
\begin{equation}
\label{eq:lem18-avg-tildeH-basic}
\frac{1}{K}\sum_{k=0}^{K-1}\mathbb{E}\,\|\tilde{\mathbf H}^{k+1}\|_F
\;\le\;
\frac{\lambda}{K(1-\lambda)}\,\mathbb{E}\,\|\tilde{\mathbf H}^{0}\|_F
+\frac{\lambda}{1-\lambda}\cdot \frac{1}{K}\sum_{k=0}^{K-1}\mathbb{E}\,\|\tilde{\mathbf M}^{k+1}-\tilde{\mathbf M}^{k}\|_F.
\end{equation}
Moreover, with $r_{\max}:=\min\{m,n\}$,
\begin{equation}
\label{eq:lem18-avg-tildeM-incr-refined}
\begin{split}
\frac{1}{K}\sum_{k=0}^{K-1}\mathbb{E}\,\|\tilde{\mathbf M}^{k+1}-\tilde{\mathbf M}^{k}\|_F
\;\le\;&\;
\frac{\mathbb{E}\,\|\widetilde{\nabla \mathbf f}(\mathbf X^0)-\tilde{\mathbf M}^0\|_F}{K}
+2(1-\beta)\,\sigma\sqrt N
\\&\;+L_*\Bigg(\alpha\,\sqrt{Nr_{\max}}+\frac{2}{K}\sum_{k=0}^{K-1}\mathbb{E}\,\|\tilde{\mathbf X}^{k}\|_F+\frac{1}{K}\,\mathbb{E}\,\|\tilde{\mathbf X}^{K}\|_F\Bigg).
\end{split}
\end{equation}
Combining~\eqref{eq:lem18-avg-tildeH-basic}--\eqref{eq:lem18-avg-tildeM-incr-refined} yields
\begin{equation}
\label{eq:lem18-avg-tildeH-refined}
\begin{split}
\frac{1}{K}&\sum_{k=0}^{K-1}\mathbb{E}\,\|\tilde{\mathbf H}^{k+1}\|_F
\;\le\;
\frac{\lambda}{K(1-\lambda)}\,\mathbb{E}\,\|\tilde{\mathbf H}^{0}\|_F
+\frac{\lambda}{1-\lambda}\Bigg[
\frac{\mathbb{E}\,\|\widetilde{\nabla \mathbf f}(\mathbf X^0)-\tilde{\mathbf M}^0\|_F}{K}
\\&\quad+L_*\Bigg(\alpha\,\sqrt{Nr_{\max}}+\frac{2}{K}\sum_{k=0}^{K-1}\mathbb{E}\,\|\tilde{\mathbf X}^{k}\|_F+\frac{1}{K}\,\mathbb{E}\,\|\tilde{\mathbf X}^{K}\|_F\Bigg)
+2(1-\beta)\,\sigma\sqrt N\Bigg].
\end{split}
\end{equation}
In particular, if Lemma~\ref{lem:sudamuon-suda-consensus-bounded-input} holds with Muon directions, then
\begin{equation}
\label{eq:lem18-avg-tildeH-explicit}
\begin{split}
\frac{1}{K}&\sum_{k=0}^{K-1}\mathbb{E}\,\|\tilde{\mathbf H}^{k+1}\|_F
\;\le\;
\frac{\lambda}{K(1-\lambda)}\,\mathbb{E}\,\|\tilde{\mathbf H}^{0}\|_F
+\frac{\lambda}{1-\lambda}\Bigg[
\frac{\mathbb{E}\,\|\widetilde{\nabla \mathbf f}(\mathbf X^0)-\tilde{\mathbf M}^0\|_F}{K}
+L_*\alpha\,\sqrt{Nr_{\max}}
\\
&\quad+\frac{2L_*v_1v_2(1+\|B^2\|)}{K(1-\gamma)}\,\|\tilde{\mathbf X}^{0}\|_F
+\frac{L_*v_1v_2(1+\|B^2\|)}{K}\,\gamma^{K}\,\|\tilde{\mathbf X}^{0}\|_F
+\frac{2L_*\alpha v_1v_2\lambda_a}{(1-\gamma)K}\,\sqrt{Nr_{\max}}
\\
&\quad+\frac{4L_*\alpha v_1v_2\lambda_a}{1-\gamma}\,\sqrt{Nr_{\max}}
+2(1-\beta)\,\sigma\sqrt N\Bigg].
\end{split}
\end{equation}
\end{lemma}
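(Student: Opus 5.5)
The plan has three parts. First, the first-moment averaged bound \eqref{eq:lem18-avg-tildeH-basic} from the tracking recursion. Second, a refined bound on the EMA increments that avoids the crude $B_G$ route, the key point being that noise enters only with a $(1-\beta)$ factor. Third, substitution of the SUDA consensus bound.

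\textbf{Step 1.} Lemma~\ref{lem:sudamuon-gt-disagreement} gives, pathwise,
\[
\|\tilde{\mathbf H}^{k+1}\|_F\le \lambda\|\tilde{\mathbf H}^k\|_F+\lambda\|\tilde{\mathbf M}^{k+1}-\tilde{\mathbf M}^k\|_F .
\]
Unrolling yields
\[
\|\tilde{\mathbf H}^{k+1}\|_F\le \lambda^{k+1}\|\tilde{\mathbf H}^0\|_F+\sum_{t=0}^{k}\lambda^{k+1-t}\|\tilde{\mathbf M}^{t+1}-\tilde{\mathbf M}^t\|_F .
\]
I then take expectations, average over $k=0,\dots,K-1$, and exchange the order of summation as in Step~3 of Lemma~\ref{lem:sudamuon-avg-gt-error-decomposition}. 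The tails are bounded by $\sum_{s\ge1}\lambda^s=\lambda/(1-\lambda)$, which gives \eqref{eq:lem18-avg-tildeH-basic} directly.

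\textbf{Step 2.} This is the main step. I write the EMA increment as
\[
\mathbf M^{k+1}-\mathbf M^k=(1-\beta)(\mathbf G^k-\mathbf M^k).
\]
Then I split $\mathbf G^k-\mathbf M^k=\mathbf\Delta^k+(\nabla\mathbf f(\mathbf X^k)-\mathbf C^k)+(\mathbf C^k-\mathbf M^k)$, where $\mathbf C^k$ is the stacked true-gradient EMA from Lemma~\ref{lem:sudamuon-ema-stoch-error}. I handle the three pieces as follows.

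The noise piece contributes $(1-\beta)\,\mathbb E\|\tilde{\mathbf\Delta}^k\|_F\le(1-\beta)\sigma\sqrt N$.

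The EMA-noise piece is $(1-\beta)\,\mathbb E\|\tilde{\mathbf C}^k-\tilde{\mathbf M}^k\|_F$. By \eqref{eq:ema-stoch-error-agent}, summed over agents via Cauchy--Schwarz, it is at most $(1-\beta)\sqrt N\sigma\bigl(\sqrt{(1-\beta)/(1+\beta)}+\beta^k\bigr)$. The $\beta^k$ part averages to $O(\sigma\sqrt N/K)$; I intend to absorb it, or alternatively bound this term more crudely by $(1-\beta)\sigma\sqrt N$ using a direct second-moment argument. Together with the noise piece this gives $2(1-\beta)\sigma\sqrt N$.

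The bias piece is $\mathbf D_{\rm loc}^k:=\nabla\mathbf f(\mathbf X^k)-\mathbf C^k$. It satisfies the same recursion as in Lemma~\ref{lem:sudamuon-ema-drift-average}, now stacked:
\[
\mathbf D_{\rm loc}^{k}=\beta\mathbf D_{\rm loc}^{k-1}+\nabla\mathbf f(\mathbf X^k)-\nabla\mathbf f(\mathbf X^{k-1}),
\]
with initial term $\nabla\mathbf f(\mathbf X^0)-\mathbf M^0$ (written as $\nabla\mathbf f(\mathbf X^0)-\mathbf C^0$ plus the noise at time $0$). Multiplying by $(1-\beta)$, unrolling, averaging over $k$ and exchanging sums, the factor $(1-\beta)\sum_s\beta^s\le1$ cancels the EMA memory. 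What remains is $\tfrac1K\sum_k\mathbb E\|\tilde{\nabla\mathbf f}(\mathbf X^{k})-\tilde{\nabla\mathbf f}(\mathbf X^{k-1})\|_F$ plus an initial term of order $1/K$. The gradient differences are at most $L_*\|\mathbf X^{k}-\mathbf X^{k-1}\|_F$. By \eqref{eq:stacked-step-alpha-tilde} this is at most $L_*(\alpha\sqrt{Nr_{\max}}+\|\tilde{\mathbf X}^{k}\|_F+\|\tilde{\mathbf X}^{k-1}\|_F)$. Regrouping the index shift produces the bracketed terms with the coefficients $2/K\sum_k$ and $\tfrac1K\|\tilde{\mathbf X}^K\|_F$.

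I expect the bookkeeping of this telescoped, shifted sum to be the main obstacle. The points to watch are the initial term $\widetilde{\nabla\mathbf f}(\mathbf X^0)-\tilde{\mathbf M}^0$ and the fact that $\mathbf V^1=0$, which is why the first increment vanishes. Projecting with $I-J$ is harmless throughout, since it is nonexpansive.

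\textbf{Step 3.} Plugging Step~2 into Step~1 gives \eqref{eq:lem18-avg-tildeH-refined}. For \eqref{eq:lem18-avg-tildeH-explicit}, I insert Lemma~\ref{lem:sudamuon-suda-consensus-bounded-input}, which gives $\|\tilde{\mathbf X}^k\|_F\le v_1v_2\gamma^k(1+\|B^2\|)\|\tilde{\mathbf X}^0\|_F+\tfrac{2\alpha v_1v_2\lambda_a}{1-\gamma}\sqrt{Nr_{\max}}$. Summing the geometric series, $\tfrac2K\sum_k\gamma^k\le\tfrac{2}{K(1-\gamma)}$, and the $k=K$ term contributes $\gamma^K/K$. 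Collecting constants yields the displayed list.
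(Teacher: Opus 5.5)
Your proposal is correct, and its skeleton is the paper's. Steps~1 and~3 are the same. Step~2 rests on the same identity $\tilde{\mathbf M}^{k+1}-\tilde{\mathbf M}^k=(1-\beta)(\tilde{\mathbf G}^k-\tilde{\mathbf M}^k)$, the same exchange of sums in which $(1-\beta)\sum_{s\ge 0}\beta^s\le 1$ cancels the EMA memory, and the same average/disagreement split of $\|\mathbf X^{k+1}-\mathbf X^k\|_F$.

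The one real difference is how you treat the local EMA error. The paper never introduces $\mathbf C^k$. It sets $\mathbf E^k:=\nabla\mathbf f(\mathbf X^k)-\mathbf M^k$, uses $\mathbf E^{k+1}=\beta\mathbf E^k+\bigl(\nabla\mathbf f(\mathbf X^{k+1})-\nabla\mathbf f(\mathbf X^k)\bigr)-(1-\beta)\mathbf\Delta^k$, and unrolls this pathwise with the triangle inequality. Then $\tilde{\mathbf E}^0$ yields exactly the term $\mathbb{E}\|\widetilde{\nabla\mathbf f}(\mathbf X^0)-\tilde{\mathbf M}^0\|_F/K$. Both noise contributions are bounded by $(1-\beta)\sigma\sqrt N$ using only $\mathbb{E}\|\tilde{\mathbf\Delta}^k\|_F\le\sigma\sqrt N$, with no cross-time martingale argument. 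Your split into a deterministic bias $\nabla\mathbf f(\mathbf X^k)-\mathbf C^k$ and a pure EMA-noise part $\mathbf C^k-\mathbf M^k$ can exploit that cancellation. It slightly improves the constant in front of $(1-\beta)\sigma\sqrt N$, which the statement does not need, at the price of extra bookkeeping.

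That bookkeeping needs three fixes.
\begin{itemize}
\item Since $\mathbf C^0=\nabla\mathbf f(\mathbf X^0)$, the bias part starts at $\mathbf D^0_{\rm loc}=0$. The time-$0$ discrepancy $\nabla\mathbf f(\mathbf X^0)-\mathbf M^0=-\mathbf\Delta^0$ lies entirely in $\mathbf C^k-\mathbf M^k$. Assigning it to both parts would break the identity $\mathbf G^k-\mathbf M^k=\mathbf\Delta^k+\mathbf D^k_{\rm loc}+(\mathbf C^k-\mathbf M^k)$.
\item The per-agent first-moment bound of Lemma~\ref{lem:sudamuon-ema-stoch-error} cannot be summed over agents into a stacked bound with factor $\sqrt N$. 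Summing first moments gives a factor $N$, and Cauchy--Schwarz runs the wrong way. You need second moments of the stacked error.
\item The remainder $\sigma\sqrt N/K$ produced by the $\beta^k$ part cannot be absorbed into $\mathbb{E}\|\tilde{\mathbf\Delta}^0\|_F/K$, which may be smaller.
\end{itemize}

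Your second-moment fallback resolves all three. Let $\mathbf e^k:=\mathbf C^k-\mathbf M^k$. Then $\mathbf e^0=-\mathbf\Delta^0$, $\mathbf e^{k+1}=\beta\mathbf e^k-(1-\beta)\mathbf\Delta^k$, and $\mathbf e^k$ is $\mathcal F^k$-measurable. Hence $\mathbb{E}\|\mathbf e^{k+1}\|_F^2\le\beta^2\mathbb{E}\|\mathbf e^k\|_F^2+(1-\beta)^2N\sigma^2$. Since $\beta^2+(1-\beta)^2\le 1$, induction gives $\mathbb{E}\|\mathbf e^k\|_F^2\le N\sigma^2$ for all $k$. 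Thus $(1-\beta)\mathbb{E}\|\tilde{\mathbf e}^k\|_F\le(1-\beta)\sigma\sqrt N$ with no transient at all, which already implies \eqref{eq:lem18-avg-tildeM-incr-refined}. Alternatively, keeping the $\beta^k\tilde{\mathbf\Delta}^0$ term pathwise reproduces the stated initial term exactly.
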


\begin{proof}
\medskip\noindent\textbf{Step 1: unroll the GT disagreement recursion.}
From Lemma~\ref{lem:sudamuon-gt-disagreement} (non-squared form),
\[
\|\tilde{\mathbf H}^{k+1}\|_F
\le \lambda\,\|\tilde{\mathbf H}^{k} + (\tilde{\mathbf M}^{k+1}-\tilde{\mathbf M}^{k})\|_F
\le \lambda\,\|\tilde{\mathbf H}^{k}\|_F+\lambda\,\|\tilde{\mathbf M}^{k+1}-\tilde{\mathbf M}^{k}\|_F.
\]
Unrolling this scalar recursion and summing the resulting geometric series yields~\eqref{eq:lem18-avg-tildeH-basic}.

\medskip\noindent\textbf{Step 2: bound the average EMA increment without a heterogeneity term.}
Define the local EMA tracking error
\[
\mathbf E^k := \nabla \mathbf f(\mathbf X^k)-\mathbf M^k,
\qquad \tilde{\mathbf E}^k := (I-J)\mathbf E^k=\widetilde{\nabla \mathbf f}(\mathbf X^k)-\tilde{\mathbf M}^k.
\]
From the EMA update $\mathbf M^{k+1}=\beta\mathbf M^k+(1-\beta)\mathbf G^k$ and the decomposition
$\mathbf G^k=\nabla \mathbf f(\mathbf X^k)+\mathbf\Delta^k$, we have
\[
\mathbf E^{k+1}
=\nabla \mathbf f(\mathbf X^{k+1})-\beta\mathbf M^k-(1-\beta)\nabla \mathbf f(\mathbf X^k)-(1-\beta)\mathbf\Delta^k
=\beta\mathbf E^k+\big(\nabla \mathbf f(\mathbf X^{k+1})-\nabla \mathbf f(\mathbf X^k)\big)-(1-\beta)\mathbf\Delta^k.
\]
Applying $(I-J)$ and using its non-expansiveness yields the recursion
\begin{equation}
\label{eq:lem18-E-tilde-rec}
\|\tilde{\mathbf E}^{k+1}\|_F
\le \beta\,\|\tilde{\mathbf E}^{k}\|_F
+\|\widetilde{\nabla \mathbf f}(\mathbf X^{k+1})-\widetilde{\nabla \mathbf f}(\mathbf X^k)\|_F
+(1-\beta)\,\|\tilde{\mathbf\Delta}^k\|_F.
\end{equation}
Unrolling~\eqref{eq:lem18-E-tilde-rec} and averaging over $k=0,\dots,K-1$ gives
\[
\frac{1}{K}\sum_{k=0}^{K-1}\mathbb{E}\,\|\tilde{\mathbf E}^{k}\|_F
\le \frac{\mathbb{E}\,\|\tilde{\mathbf E}^{0}\|_F}{K(1-\beta)}
+\frac{1}{K(1-\beta)}\sum_{k=0}^{K-1}\mathbb{E}\,\|\widetilde{\nabla \mathbf f}(\mathbf X^{k+1})-\widetilde{\nabla \mathbf f}(\mathbf X^k)\|_F
+\frac{1}{K}\sum_{k=0}^{K-1}\mathbb{E}\,\|\tilde{\mathbf\Delta}^k\|_F,
\]
where we used the weight identity
$(1-\beta)\sum_{s=0}^{\infty}\beta^s=1$.

Next, since each $f_i$ is $L_*$-smooth in the sense of Assumption~\ref{assump:smooth}, we have for each $i$ and any $X,Y$ that
$\|\nabla f_i(X)-\nabla f_i(Y)\|_F\le \|\nabla f_i(X)-\nabla f_i(Y)\|_*\le L_*\|X-Y\|\le L_*\|X-Y\|_F$.
Stacking and using non-expansiveness of $(I-J)$ yields
\[
\|\widetilde{\nabla \mathbf f}(\mathbf X^{k+1})-\widetilde{\nabla \mathbf f}(\mathbf X^k)\|_F
\le L_*\,\|\mathbf X^{k+1}-\mathbf X^k\|_F.
\]
Moreover, by Assumption~\ref{assump:stoch} and Jensen,
$\mathbb{E}\,\|\tilde{\mathbf\Delta}^k\|_F\le \mathbb{E}\,\|\mathbf\Delta^k\|_F\le \sigma\sqrt N$.
Therefore,
\begin{equation}
\label{eq:lem18-avg-Etilde}
\frac{1}{K}\sum_{k=0}^{K-1}\mathbb{E}\,\|\tilde{\mathbf E}^{k}\|_F
\le \frac{\mathbb{E}\,\|\tilde{\mathbf E}^{0}\|_F}{K(1-\beta)}
+\frac{L_*}{1-\beta}\cdot\frac{1}{K}\sum_{k=0}^{K-1}\mathbb{E}\,\|\mathbf X^{k+1}-\mathbf X^k\|_F
+\sigma\sqrt N.
\end{equation}

Finally, note that
$\tilde{\mathbf M}^{k+1}-\tilde{\mathbf M}^{k}=(1-\beta)\big(\tilde{\mathbf G}^k-\tilde{\mathbf M}^k\big)=(1-\beta)\big(\tilde{\mathbf\Delta}^k+\tilde{\mathbf E}^k\big)$.
Thus, using triangle inequality and~\eqref{eq:lem18-avg-Etilde},
\[
\frac{1}{K}\sum_{k=0}^{K-1}\mathbb{E}\,\|\tilde{\mathbf M}^{k+1}-\tilde{\mathbf M}^{k}\|_F
\le (1-\beta)\cdot\frac{1}{K}\sum_{k=0}^{K-1}\mathbb{E}\,\|\tilde{\mathbf\Delta}^k\|_F
+(1-\beta)\cdot\frac{1}{K}\sum_{k=0}^{K-1}\mathbb{E}\,\|\tilde{\mathbf E}^k\|_F
\]
\[
\le (1-\beta)\sigma\sqrt N
+\frac{\mathbb{E}\,\|\tilde{\mathbf E}^{0}\|_F}{K}
+L_*\cdot\frac{1}{K}\sum_{k=0}^{K-1}\mathbb{E}\,\|\mathbf X^{k+1}-\mathbf X^k\|_F
+(1-\beta)\sigma\sqrt N.
\]
It remains to bound the average primal increment.
Write $\mathbf X^k=\bar{\mathbf X}^k+\tilde{\mathbf X}^k$ with $\bar{\mathbf X}^k:=J\mathbf X^k$.
Then
\[
\|\mathbf X^{k+1}-\mathbf X^{k}\|_F
\le \|\bar{\mathbf X}^{k+1}-\bar{\mathbf X}^{k}\|_F+\|\tilde{\mathbf X}^{k+1}-\tilde{\mathbf X}^{k}\|_F.
\]
Since $\bar{\mathbf X}^k$ is a stacked consensus variable with all blocks equal to $\bar X^k$, we have
$\|\bar{\mathbf X}^{k+1}-\bar{\mathbf X}^{k}\|_F=\sqrt N\,\|\bar X^{k+1}-\bar X^{k}\|_F$.
By Lemma~\ref{lem:sudamuon-average-dynamics}, $\|\bar X^{k+1}-\bar X^{k}\|_F\le \alpha\sqrt{r_{\max}}$.
Moreover, $\|\tilde{\mathbf X}^{k+1}-\tilde{\mathbf X}^{k}\|_F\le \|\tilde{\mathbf X}^{k+1}\|_F+\|\tilde{\mathbf X}^{k}\|_F$.
Averaging over $k=0,\dots,K-1$ yields
\[
\frac{1}{K}\sum_{k=0}^{K-1}\mathbb{E}\,\|\mathbf X^{k+1}-\mathbf X^{k}\|_F
\le \alpha\sqrt{Nr_{\max}}+\frac{2}{K}\sum_{k=0}^{K-1}\mathbb{E}\,\|\tilde{\mathbf X}^{k}\|_F+\frac{1}{K}\,\mathbb{E}\,\|\tilde{\mathbf X}^{K}\|_F.
\]
Substituting into the previous display gives~\eqref{eq:lem18-avg-tildeM-incr-refined}.

\medskip\noindent\textbf{Step 3: plug into the GT bound and (optionally) use Lemma~\ref{lem:sudamuon-suda-consensus-bounded-input}.}
Combining Step 1 and Step 2 yields~\eqref{eq:lem18-avg-tildeH-refined}.
If Lemma~\ref{lem:sudamuon-suda-consensus-bounded-input} holds with Muon directions, then
$\|\tilde{\mathbf X}^k\|_F\le v_1v_2\gamma^k(1+\|B^2\|)\|\tilde{\mathbf X}^0\|_F+\frac{2\alpha v_1v_2\lambda_a}{1-\gamma}\sqrt{Nr_{\max}}$.
Averaging and using $\frac1K\sum_{k=0}^{K-1}\gamma^k\le \frac{1}{K(1-\gamma)}$ gives
\[
\frac{1}{K}\sum_{k=0}^{K-1}\mathbb{E}\,\|\tilde{\mathbf X}^{k}\|_F
\le \frac{v_1v_2(1+\|B^2\|)}{K(1-\gamma)}\,\|\tilde{\mathbf X}^{0}\|_F
+\frac{2\alpha v_1v_2\lambda_a}{1-\gamma}\,\sqrt{Nr_{\max}},
\]
which implies~\eqref{eq:lem18-avg-tildeH-explicit}.
\end{proof}

\begin{proposition}[Transient complexity with explicit (fast) network-dependent decay]
\label{prop:sudamuon-final-convergence-K-1-4}
Assume Assumptions~\ref{assump:network-suda}--\ref{assump:muon}.
Let the iterates be generated by~\eqref{eq:sudamuon-framework} with initialization
$\mathbf Y^0=0$, $\mathbf M^0=\mathbf G^0$, and $\mathbf H^0=\mathbf M^0$ as in Algorithm~\ref{alg:suda-muon}.
Assume also the stability condition in Lemma~\ref{lem:sudamuon-suda-consensus-bounded-input} so that its constants $(\gamma,v_1,v_2,\lambda_a)$ exist.
Fix a horizon $K\ge 1$ and choose
\[
\alpha:=\frac{\alpha_0}{K^{3/4}},\qquad 1-\beta:=\frac{b_0}{\sqrt K},
\qquad \text{with fixed constants }\alpha_0>0,\; b_0\in(0,1].
\]
Let $r_{\max}:=\min\{m,n\}$ and $\Delta_0:=\mathbb{E}[f(\bar X^0)]-f_{\inf}$.
Let $\lambda:=\rho(W-J)\in(0,1)$.
Define the average tracking error
\[
E_{\rm gt}^k:=\bar H^k-\nabla f(\bar X^k),
\qquad \bar H^k:=\frac1N\sum_{i=1}^N H_i^k,\quad \bar X^k:=\frac1N\sum_{i=1}^N X_i^k.
\]
Then
{\small
\begin{equation}
\label{eq:sudamuon-final-rate-K-1-4-refined}
\begin{aligned}
&\frac{1}{K}\sum_{k=0}^{K-1}\mathbb{E}\,\|\nabla f(\bar X^k)\|_*
\;\le\;
\underbrace{\Bigg(
\frac{\Delta_0}{\alpha_0}
+\frac{2\sigma\sqrt{r_{\max}}}{\sqrt N}\,\sqrt{b_0}
+\frac{2L_*\alpha_0 r_{\max}}{b_0}
\Bigg)K^{-1/4}}_{\text{topology-free leading term}}
\\[0.25em]
&\;+\underbrace{\Bigg(
\frac{5}{2}L_*\alpha_0
+\frac{4L_*\alpha_0 v_1v_2\lambda_a}{1-\gamma}\,r_{\max}
+\frac{2\lambda}{1-\lambda}\,L_*\alpha_0\,r_{\max}
+\frac{8\lambda}{1-\lambda}\,\frac{L_*\alpha_0 v_1v_2\lambda_a}{1-\gamma}\,r_{\max}
\Bigg)K^{-3/4}}_{\text{network-dependent drift terms (no $\sigma$)}}
\\[0.25em]
&\;+\underbrace{\Bigg(
\frac{2\sqrt{r_{\max}}}{b_0}\,\mathbb{E}\|E_{\rm gt}^0\|_F
+\frac{4\lambda}{1-\lambda}\,b_0\,\sigma\sqrt{r_{\max}}
\Bigg)K^{-1/2}}_{\text{higher-order stochastic terms}}
\\[0.25em]
&\;+\Bigg(
\frac{2L_*v_1v_2(1+\|B^2\|)}{(1-\gamma)\sqrt N}\,\sqrt{r_{\max}}\,\|\tilde{\mathbf X}^0\|_F
+\frac{4\lambda}{1-\lambda}\,\frac{L_*v_1v_2(1+\|B^2\|)}{(1-\gamma)\sqrt N}\,\sqrt{r_{\max}}\,\|\tilde{\mathbf X}^0\|_F
\\
&\qquad+\frac{2\lambda}{1-\lambda}\,\frac{L_*v_1v_2(1+\|B^2\|)}{\sqrt N}\,\sqrt{r_{\max}}\,\|\tilde{\mathbf X}^0\|_F
+\frac{2\lambda}{1-\lambda}\,\sqrt{\frac{r_{\max}}{N}}\,\mathbb{E}\|\widetilde{\nabla \mathbf f}(\mathbf X^0)-\tilde{\mathbf M}^0\|_F
\\
&\qquad+\frac{2\lambda}{1-\lambda}\,\sqrt{\frac{r_{\max}}{N}}\,\mathbb{E}\|\tilde{\mathbf H}^0\|_F
\Bigg)\underbrace{K^{-1}}_{\text{init.\ transients}}
\\[0.25em]
&\;+\underbrace{\frac{4\lambda}{1-\lambda}\,\frac{L_*\alpha_0 v_1v_2\lambda_a}{1-\gamma}\,r_{\max}\,K^{-7/4}}_{\text{fast transient from }\alpha/K}.
\end{aligned}
\end{equation}
}
In particular,
\[
\lim_{K\to\infty}\frac{1}{K}\sum_{k=0}^{K-1}\mathbb{E}\,\|\nabla f(\bar X^k)\|_*\;=\;0.
\]
\end{proposition}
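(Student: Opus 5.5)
We start from the telescoped descent inequality of Proposition~\ref{prop:telescoping-descent}, which bounds the average stationarity by
\[
\frac{\Delta_0}{\alpha K}+\frac{L_*\alpha}{2}+\frac{2}{NK}\sum_{k}\sum_i\mathbb{E}\|\nabla f(\bar X^k)-H_i^{k+1}\|_*.
\]
With $\alpha=\alpha_0K^{-3/4}$ the first term is exactly $(\Delta_0/\alpha_0)K^{-1/4}$, and the second gives $\tfrac12L_*\alpha_0K^{-3/4}$.

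Unlike Proposition~\ref{prop:sudamuon-closed-stationarity}, we do not split the remainder through the sup-type drift bound of Lemma~\ref{lem:sudamuon-ema-drift-average}. That bound carries a factor $\beta/(1-\beta)$ multiplied by non-vanishing $\|\tilde{\mathbf X}^0\|$ terms, which would destroy the rate. Instead, for each $i$ we write
\[
\|\nabla f(\bar X^k)-H_i^{k+1}\|_*\le \|\bar H^{k+1}-\nabla f(\bar X^{k+1})\|_*+\|\nabla f(\bar X^{k+1})-\nabla f(\bar X^k)\|_*+\|H_i^{k+1}-\bar H^{k+1}\|_*.
\]
The middle term is at most $L_*\alpha$ by smoothness and $\|\bar S^{k+1}\|\le 1$. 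The first term is $\|E_{\rm gt}^{k+1}\|_*\le\sqrt{r_{\max}}\|E_{\rm gt}^{k+1}\|_F$. The last, after averaging over $i$, is at most $\sqrt{r_{\max}/N}\,\|\tilde{\mathbf H}^{k+1}\|_F$ by \eqref{eq:H-disagreement-nuc-vs-F}. The coefficient $2$ in front of the sum explains the doubled constants, e.g.\ $2L_*\alpha$ added to $\tfrac12L_*\alpha$ gives the $\tfrac52L_*\alpha_0K^{-3/4}$ term.

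Next we bound the averaged $\|E_{\rm gt}^{k+1}\|_F$ with Lemma~\ref{lem:sudamuon-avg-gt-error-decomposition}, which produces four pieces.
\begin{itemize}
\item[(i)] The noise piece $\sqrt{1-\beta}\,\sigma/\sqrt N=\sqrt{b_0}\,\sigma K^{-1/4}/\sqrt N$ becomes the topology-free $\sigma$-term after multiplying by $2\sqrt{r_{\max}}$.
\item[(ii)] The drift piece $\frac{L_*}{1-\beta}\alpha\sqrt{r_{\max}}=L_*\alpha_0\sqrt{r_{\max}}K^{-1/4}/b_0$ gives the term $2L_*\alpha_0r_{\max}/b_0$. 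Here we use $\|\bar X^{k+1}-\bar X^k\|_F\le\alpha\sqrt{r_{\max}}$.
\item[(iii)] The consensus bias $\frac{L_*}{\sqrt N}\frac1K\sum_k\|\tilde{\mathbf X}^k\|_F$ is bounded via Lemma~\ref{lem:sudamuon-suda-consensus-bounded-input} and the geometric sum $\frac1K\sum\gamma^k\le\frac1{K(1-\gamma)}$. This yields the $\frac{4L_*\alpha_0v_1v_2\lambda_a}{1-\gamma}r_{\max}K^{-3/4}$ term and a $K^{-1}$ initialization term in $\|\tilde{\mathbf X}^0\|_F$.
\item[(iv)] The initial transient satisfies $\frac1K\sum\beta^{k+1}\le\frac{1}{K(1-\beta)}=K^{-1/2}/b_0$, giving the $\frac{2\sqrt{r_{\max}}}{b_0}\mathbb{E}\|E^0_{\rm gt}\|_FK^{-1/2}$ term.
\end{itemize}

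For the tracking disagreement we invoke the explicit bound \eqref{eq:lem18-avg-tildeH-explicit} of Lemma~\ref{lem:sudamuon-gt-disagreement-O1mb}, multiplied by $2\sqrt{r_{\max}/N}$. The pieces map as follows:
\begin{itemize}
\item $L_*\alpha\sqrt{Nr_{\max}}$ gives $\frac{2\lambda}{1-\lambda}L_*\alpha_0r_{\max}K^{-3/4}$;
\item the $\frac{4L_*\alpha v_1v_2\lambda_a}{1-\gamma}\sqrt{Nr_{\max}}$ piece gives the $\frac{8\lambda}{1-\lambda}$ term;
\item $2(1-\beta)\sigma\sqrt N$ gives $\frac{4\lambda}{1-\lambda}b_0\sigma\sqrt{r_{\max}}K^{-1/2}$;
\item the $\alpha/K$ piece gives the $K^{-7/4}$ term;
\item the $\tilde{\mathbf X}^0$, $\tilde{\mathbf H}^0$, and $\widetilde{\nabla\mathbf f}(\mathbf X^0)-\tilde{\mathbf M}^0$ pieces give the $K^{-1}$ transients, where $\gamma^K\le 1$ absorbs the $\gamma^K$ term.
\end{itemize}

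Finally, every term has a negative power of $K$, so the limit claim is immediate.

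The main obstacle is bookkeeping rather than conceptual work. It has three parts: matching every constant exactly, in particular the factors $(1+\|B^2\|)$ in the initial-disagreement terms and whether the consensus initial term appears with coefficient $2$ or $4$; checking that $\sqrt{r_{\max}}\cdot\sqrt{N r_{\max}}/\sqrt N=r_{\max}$ conversions line up; and ensuring the shift from $E^{k}$ to $E^{k+1}$ costs only the $L_*\alpha$ smoothness term. If a constant does not match, we would loosen it to the stated, larger one, since each stated coefficient is at least what the derivation produces.
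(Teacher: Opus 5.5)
Your proposal is correct and matches the paper's proof step for step. It uses the same split of $\|\nabla f(\bar X^k)-H_i^{k+1}\|_*$ into $L_*\alpha$, $\sqrt{r_{\max}}\,\|E_{\rm gt}^{k+1}\|_F$ and $\sqrt{r_{\max}/N}\,\|\tilde{\mathbf H}^{k+1}\|_F$, then Lemma~\ref{lem:sudamuon-avg-gt-error-decomposition} together with the explicit bound \eqref{eq:lem18-avg-tildeH-explicit}, then substitution of the schedules. Carried out, your bookkeeping yields exactly the stated coefficients, including $2$ and $4\lambda/(1-\lambda)$ on the $\|\tilde{\mathbf X}^0\|_F$ transients, so no loosening is needed.
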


\begin{proof}
\medskip\noindent\textbf{Step 1: start from the telescoping descent inequality.}
Proposition~\ref{prop:telescoping-descent} gives
\begin{equation}
\label{eq:pf-final-start}
\frac{1}{K}\sum_{k=0}^{K-1}\mathbb{E}\,\|\nabla f(\bar X^k)\|_*
\le
\frac{\Delta_0}{\alpha K}
+\frac{L_*\alpha}{2}
+\frac{2}{K}\sum_{k=0}^{K-1}\mathbb{E}\Big[\frac{1}{N}\sum_{i=1}^N\|\nabla f(\bar X^k)-H_i^{k+1}\|_*\Big].
\end{equation}

\medskip\noindent\textbf{Step 2: reduce the tracking mismatch to averaged GT error and GT disagreement.}
For each $k$,
\[
\mathbb{E}\Big[\frac{1}{N}\sum_{i=1}^N\|\nabla f(\bar X^k)-H_i^{k+1}\|_*\Big]
\le L_*\alpha
+\sqrt{r_{\max}}\,\mathbb{E}\|E_{\rm gt}^{k+1}\|_F
+\sqrt{\frac{r_{\max}}{N}}\,\mathbb{E}\|\tilde{\mathbf H}^{k+1}\|_F.
\]
Averaging over $k=0,\dots,K-1$ and substituting into~\eqref{eq:pf-final-start} yields
\begin{equation}
\label{eq:pf-final-master}
\begin{split}
\frac{1}{K}\sum_{k=0}^{K-1}\mathbb{E}\,\|\nabla f(\bar X^k)\|_*
\le&\;
\frac{\Delta_0}{\alpha K}
+\frac{5}{2}L_*\alpha
+2\sqrt{r_{\max}}\cdot \frac{1}{K}\sum_{k=0}^{K-1}\mathbb{E}\|E_{\rm gt}^{k+1}\|_F
\\&\;+2\sqrt{\frac{r_{\max}}{N}}\cdot \frac{1}{K}\sum_{k=0}^{K-1}\mathbb{E}\|\tilde{\mathbf H}^{k+1}\|_F.
\end{split}
\end{equation}

\medskip\noindent\textbf{Step 3: bound the averaged GT error and the averaged GT disagreement.}
Lemma~\ref{lem:sudamuon-avg-gt-error-decomposition} gives
\begin{align}
\label{eq:pf-final-Egt-bound}
\frac{1}{K}\sum_{k=0}^{K-1}\mathbb{E}\|E_{\rm gt}^{k+1}\|_F
&\le \sqrt{\tfrac{1-\beta}{1+\beta}}\,\tfrac{\sigma}{\sqrt N}
+\frac{L_*}{1-\beta}\cdot \frac{1}{K}\sum_{k=0}^{K-1}\mathbb{E}\|\bar X^{k+1}-\bar X^k\|_F
\notag\\
&\quad +\frac{1}{K}\sum_{k=0}^{K-1}\mathbb{E}\|\bar g^k-\nabla f(\bar X^k)\|_F
+\frac{1}{K}\sum_{k=0}^{K-1}\beta^{k+1}\,\mathbb{E}\|E_{\rm gt}^{0}\|_F.
\end{align}
By Lemma~\ref{lem:sudamuon-average-dynamics}, $\|\bar X^{k+1}-\bar X^k\|_F\le \alpha\sqrt{r_{\max}}$.
Moreover, by~\eqref{eq:sudamuon-avg-gt-error-consensus-bias} and Lemma~\ref{lem:sudamuon-suda-consensus-bounded-input} (Muon case),
\[
\frac{1}{K}\sum_{k=0}^{K-1}\mathbb{E}\|\bar g^k-\nabla f(\bar X^k)\|_F
\le \frac{L_*}{\sqrt N}\cdot \frac{1}{K}\sum_{k=0}^{K-1}\mathbb{E}\|\tilde{\mathbf X}^k\|_F.
\]
By Lemma~\ref{lem:sudamuon-suda-consensus-bounded-input} with Muon directions (using~\eqref{eq:sudamuon-suda-consensus-bounded-input-bdd}), for every $k\ge 0$,
\[
\|\tilde{\mathbf X}^k\|_F
\le v_1v_2\gamma^k(1+\|B^2\|)\|\tilde{\mathbf X}^0\|_F
+\frac{2\alpha v_1v_2\lambda_a}{1-\gamma}\,\sqrt{Nr_{\max}},
\]
and hence, averaging over $k=0,\dots,K-1$ and using $\frac1K\sum_{k=0}^{K-1}\gamma^k\le \frac{1}{K(1-\gamma)}$,
\[
\frac{1}{K}\sum_{k=0}^{K-1}\mathbb{E}\|\tilde{\mathbf X}^k\|_F
\le \frac{v_1v_2(1+\|B^2\|)}{(1-\gamma)K}\,\|\tilde{\mathbf X}^0\|_F
+\frac{2\alpha v_1v_2\lambda_a}{1-\gamma}\,\sqrt{Nr_{\max}}.
\]
In particular, the primal disagreement is of order $O(\alpha)$ up to an exponentially decaying/initial transient, and this bound is \emph{independent of the gradient noise level $\sigma$} thanks to the uniform boundedness of Muon directions.

For the GT disagreement, Lemma~\ref{lem:sudamuon-gt-disagreement-O1mb} yields~\eqref{eq:lem18-avg-tildeH-refined}.
In this bound, the stochastic-noise contribution enters multiplied by $(1-\beta)$ (via the EMA increment), giving a term of the form
\[
\frac{\lambda}{1-\lambda}\,(1-\beta)\,\sigma\sqrt N,
\]
which becomes a higher-order term of order $(1-\beta)\sigma=O(K^{-1/2})$ under $1-\beta=b_0K^{-1/2}$.
The remaining deterministic contribution in~\eqref{eq:lem18-avg-tildeH-refined} is controlled by smoothness through the primal increments and the primal disagreement; under Muon, these increments are $O(\alpha)$.
Moreover, Lemma~\ref{lem:sudamuon-gt-disagreement-O1mb} contains an additional boundary term of the form $\tfrac{L_*}{K}\,\mathbb{E}\|\tilde{\mathbf X}^K\|_F$.
Using Lemma~\ref{lem:sudamuon-suda-consensus-bounded-input} (Muon case) at time $k=K$ and $\gamma^K\le 1$ yields
\[
\frac{1}{K}\,\mathbb{E}\|\tilde{\mathbf X}^K\|_F
\le \frac{v_1v_2(1+\|B^2\|)}{K}\,\|\tilde{\mathbf X}^0\|_F+\frac{2\alpha v_1v_2\lambda_a}{(1-\gamma)K}\,\sqrt{Nr_{\max}}.
\]
The first term contributes an additional $K^{-1}$ initial transient, while the second contributes a faster term of order $\alpha/K$.
Substituting the GT disagreement bound~\eqref{eq:lem18-avg-tildeH-refined} (together with the averaged primal-disagreement bound above) into~\eqref{eq:pf-final-master} yields the claimed $K$-dependent decomposition.

\medskip\noindent\textbf{Step 4: substitute $\alpha=\alpha_0K^{-3/4}$ and $1-\beta=b_0K^{-1/2}$.}
Collect powers of $K$ using
\[
\frac{\Delta_0}{\alpha K}=\frac{\Delta_0}{\alpha_0}K^{-1/4},\qquad
\sqrt{1-\beta}=\sqrt{b_0}\,K^{-1/4},\qquad
\frac{\alpha}{1-\beta}=\frac{\alpha_0}{b_0}K^{-1/4},
\]
\[
\frac{1}{K}\sum_{k=0}^{K-1}\beta^{k+1}\le \frac{1}{K(1-\beta)}=\frac{1}{b_0}K^{-1/2},\qquad
\alpha=\alpha_0K^{-3/4},\qquad
\frac{\alpha}{K}=\alpha_0K^{-7/4},
\]
\[
1-\beta=b_0K^{-1/2},\qquad
\frac{1}{K}=K^{-1}.
\]
This yields~\eqref{eq:sudamuon-final-rate-K-1-4-refined}.
\end{proof}

\begin{corollary}[Big-O form of the SUDA--Muon convergence rate]
\label{cor:sudamuon-final-convergence-bigO}
Under the assumptions of Proposition~\ref{prop:sudamuon-final-convergence-K-1-4}
and with the same stepsize and EMA choices
$\alpha = \alpha_0 K^{-3/4}$ and $1-\beta = b_0 K^{-1/2}$,
there exists a constant $C>0$ (independent of $K$, $N$, $\lambda$, $\gamma$, and $\sigma$)
such that for all integers $K\ge 1$,
\begin{equation}
\label{eq:sudamuon-final-rate-bigO}
\begin{split}
\frac{1}{K}\sum_{k=0}^{K-1} \mathbb{E}\big[\|\nabla f(\bar X^k)\|_*\big]
\;\le\; C\Bigg(&
\underbrace{\Bigl(1+\tfrac{\sigma}{\sqrt{N}}\Bigr)K^{-1/4}}_{\text{dominant topology-free term}}
+\underbrace{\frac{K^{-3/4}}{(1-\gamma)(1-\lambda)}}_{\text{network-dependent drift terms}}
\\
&+\underbrace{\Bigl(1+\frac{\sigma}{1-\lambda}\Bigr)K^{-1/2}}_{\text{higher-order stochastic network term}}
+\underbrace{\frac{K^{-1}}{\sqrt{N}(1-\gamma)(1-\lambda)}}_{\text{network-dependent initial transient}}
\\
&+\underbrace{\frac{K^{-7/4}}{(1-\gamma)(1-\lambda)}}_{\text{fast transient}}
\Bigg).
\end{split}
\end{equation}
Equivalently,
\begin{multline*}
\frac{1}{K}\sum_{k=0}^{K-1} \mathbb{E}\big[\|\nabla f(\bar X^k)\|_*\big]
= \mathcal{O}\Bigl(
\bigl(1+\tfrac{\sigma}{\sqrt{N}}\bigr)K^{-1/4}
+ \tfrac{K^{-3/4}}{(1-\gamma)(1-\lambda)}\\
+ \Bigl(1+\tfrac{\sigma}{1-\lambda}\Bigr)K^{-1/2}
+ \tfrac{K^{-1}}{\sqrt{N}(1-\gamma)(1-\lambda)}
+ \tfrac{K^{-7/4}}{(1-\gamma)(1-\lambda)}
\Bigr).
\end{multline*}
\end{corollary}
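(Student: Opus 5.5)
The corollary is a bookkeeping consequence of Proposition~\ref{prop:sudamuon-final-convergence-K-1-4}. I would start from the explicit bound \eqref{eq:sudamuon-final-rate-K-1-4-refined} and regroup its terms by power of $K$. Within each group, every coefficient should be dominated by a universal constant times the corresponding network/noise factor in \eqref{eq:sudamuon-final-rate-bigO}. Throughout, I treat $\alpha_0,b_0,L_*,\Delta_0,r_{\max}$ and the initialization quantities as problem constants absorbed into $C$. I also absorb $v_1v_2\lambda_a$, $1+\|B^2\|$, and the normalized initial disagreements $\|\tilde{\mathbf X}^0\|_F$, $\mathbb{E}\|\tilde{\mathbf H}^0\|_F/\sqrt N$, $\mathbb{E}\|E_{\rm gt}^0\|_F$, as is standard for big-$\mathcal O$ statements. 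Only $K$, $N$, $\sigma$, $\lambda$ and $\gamma$ are tracked.

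Next I would use the elementary facts $\lambda\in(0,1)$, so $\frac{\lambda}{1-\lambda}\le\frac{1}{1-\lambda}$, and that $1\le \frac{1}{1-\gamma}$, $1\le\frac{1}{1-\lambda}$. These let every individual network factor be bounded by the product $\frac{1}{(1-\gamma)(1-\lambda)}$.

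The groups are handled as follows.
\begin{itemize}
\item \textbf{$K^{-1/4}$ group.} It is $\frac{\Delta_0}{\alpha_0}+\frac{2L_*\alpha_0r_{\max}}{b_0}+2\sqrt{b_0r_{\max}}\,\frac{\sigma}{\sqrt N}\le C(1+\sigma/\sqrt N)$, with no network dependence.
\item \textbf{$K^{-3/4}$ group.} Each of its four coefficients is at most $C/((1-\gamma)(1-\lambda))$.
\item \textbf{$K^{-1/2}$ group.} It is $\frac{2\sqrt{r_{\max}}}{b_0}\mathbb{E}\|E_{\rm gt}^0\|_F+\frac{4\lambda b_0\sqrt{r_{\max}}}{1-\lambda}\sigma\le C(1+\sigma/(1-\lambda))$.
\item \textbf{$K^{-1}$ group.} The three $\|\tilde{\mathbf X}^0\|_F$ terms carry an explicit $1/\sqrt N$ together with at most $\frac{1}{(1-\gamma)(1-\lambda)}$. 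The two terms with $\sqrt{r_{\max}/N}$ times $\mathbb{E}\|\widetilde{\nabla\mathbf f}(\mathbf X^0)-\tilde{\mathbf M}^0\|_F$ and $\mathbb{E}\|\tilde{\mathbf H}^0\|_F$ also carry $1/\sqrt N$ once those initial stacked quantities are treated as constants. The whole group is then at most $C\,K^{-1}/(\sqrt N(1-\gamma)(1-\lambda))$.
\item \textbf{$K^{-7/4}$ term.} It is immediate.
\end{itemize}
Summing the groups with a single maximal constant $C$ gives \eqref{eq:sudamuon-final-rate-bigO}, and the $\mathcal O$ form is a restatement.

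The main obstacle is the $K^{-1}$ group and the claimed independence of $C$ from $N$ (and $\gamma$). Stacked initial norms such as $\|\tilde{\mathbf X}^0\|_F$ and $\mathbb{E}\|\tilde{\mathbf H}^0\|_F$ scale like $\sqrt N$ in general, and $v_1,v_2,\lambda_a$ depend on $(A,B,C)$. I would handle this by stating explicitly that $C$ depends on these initialization and SUDA-conditioning quantities, through per-node normalized versions. For example, taking $\mathbf X^0$ consensual gives $\tilde{\mathbf X}^0=0$, which kills those terms. With that interpretation of "independent," the remaining steps are routine term matching.
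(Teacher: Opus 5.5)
Your route is the paper's route. You regroup \eqref{eq:sudamuon-final-rate-K-1-4-refined} by powers of $K$ and dominate each coefficient using $\lambda/(1-\lambda)\le 1/(1-\lambda)$ and $1\le 1/(1-\gamma)$. You absorb $v_1v_2\lambda_a$, $\|B^2\|$ and the initialization quantities into $C$; the paper does this silently, and you at least say so. The $K^{-1/4}$, $K^{-3/4}$ and $K^{-7/4}$ groups go through as you describe.

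\textbf{The gap: the last two terms of the $K^{-1}$ group.} Your treatment of these fails, and the remedy in your final paragraph does not reach them. Since $\mathbf M^0=\mathbf H^0=\mathbf G^0$, you have $\widetilde{\nabla\mathbf f}(\mathbf X^0)-\tilde{\mathbf M}^0=-\tilde{\mathbf\Delta}^0$ and $\tilde{\mathbf H}^0=\widetilde{\nabla\mathbf f}(\mathbf X^0)+\tilde{\mathbf\Delta}^0$. Under Assumption~\ref{assump:stoch} (e.g.\ Gaussian noise), their expected norms can be of order $\sigma\sqrt N$ and $\sqrt N(\sigma+\zeta_0)$. Here $\zeta_0:=\|\widetilde{\nabla\mathbf f}(\mathbf X^0)\|_F/\sqrt N$ is the per-node initial gradient heterogeneity, which is nonzero even for a consensual $\mathbf X^0$. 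So these two terms are of size $\frac{\lambda\sqrt{r_{\max}}}{1-\lambda}(\sigma+\zeta_0)K^{-1}$. The explicit $1/\sqrt N$ is used up and the coefficient grows with $\sigma$. Hence no $C$ independent of $N$ and $\sigma$ bounds them by $C\,K^{-1}/(\sqrt N(1-\gamma)(1-\lambda))$. Choosing $\tilde{\mathbf X}^0=0$ only kills the three $\|\tilde{\mathbf X}^0\|_F$ terms. Passing to per-node normalized constants is precisely what consumes the $1/\sqrt N$ you want to keep.

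\textbf{The fix: reroute rather than match power by power.}
\begin{itemize}
\item Since $K^{-1}\le K^{-1/2}$, the $\frac{\lambda\sqrt{r_{\max}}\,\sigma}{1-\lambda}K^{-1}$ pieces fit into the $\bigl(1+\frac{\sigma}{1-\lambda}\bigr)K^{-1/2}$ term.
\item Since $K^{-1}\le K^{-3/4}$ and $1\le\frac{1}{1-\gamma}$, the $\zeta_0$ piece (and any normalized $\|\tilde{\mathbf X}^0\|_F/\sqrt N$ pieces) fit into the $\frac{K^{-3/4}}{(1-\gamma)(1-\lambda)}$ term.
\item $\mathbb{E}\|E_{\rm gt}^0\|_F\le\sigma/\sqrt N+L_*\|\tilde{\mathbf X}^0\|_F/\sqrt N$ is $\sigma$-dependent, so it belongs in the $\sigma/(1-\lambda)$ slot rather than in $C$.
\end{itemize}
With this rerouting the displayed bound holds. $C$ then depends only on $\Delta_0,L_*,r_{\max},\alpha_0,b_0,\zeta_0,\|\tilde{\mathbf X}^0\|_F/\sqrt N$, and on $v_1v_2\lambda_a,\|B^2\|$ (treating the latter as fixed, as the paper implicitly does). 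The $K^{-1}/\sqrt N$ slot is then not actually needed.

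The paper's own proof makes the same ``factor out $\sqrt{r_{\max}}/\sqrt N$'' step for the $K^{-1}$ group. So this flaw is shared with the paper rather than a departure from it.
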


\begin{proof}
Starting from the refined bound~\eqref{eq:sudamuon-final-rate-K-1-4-refined} in
Proposition~\ref{prop:sudamuon-final-convergence-K-1-4}, note that it decomposes into
five groups of terms with powers $K^{-1/4}$, $K^{-3/4}$, $K^{-1/2}$, $K^{-1}$, and $K^{-7/4}$.
We upper-bound each coefficient by a universal constant times the desired
$(K,N,1-\lambda,1-\gamma,\sigma)$-dependent factor.

\medskip\noindent\textbf{1.\ $K^{-1/4}$ term.}
The coefficient of $K^{-1/4}$ in~\eqref{eq:sudamuon-final-rate-K-1-4-refined} is
\[
\frac{\Delta_0}{\alpha_0}
+ \frac{2\sigma\sqrt{r_{\max}}}{\sqrt N}\,\sqrt{b_0}
+ \frac{2L_*\alpha_0 r_{\max}}{b_0},
\]
which is bounded by a constant depending only on
$(\Delta_0, L_*, r_{\max}, \alpha_0, b_0)$ times $1+\sigma/\sqrt{N}$.
This yields the first term in~\eqref{eq:sudamuon-final-rate-bigO}.

\medskip\noindent\textbf{2.\ $K^{-3/4}$ term.}
The $K^{-3/4}$ coefficient in~\eqref{eq:sudamuon-final-rate-K-1-4-refined}
contains only deterministic drift terms and depends on $\gamma$ and $\lambda$ via
expressions of the form $1/(1-\gamma)$ and $\lambda/(1-\lambda)$.
Using $0<\gamma<1$ and $0<\lambda<1$, we have $1\le 1/(1-\gamma)$ and
$\lambda/(1-\lambda)\le 1/(1-\lambda)$, so the entire coefficient is bounded by
a constant multiple of $1/\bigl((1-\gamma)(1-\lambda)\bigr)$.
This gives the second term in~\eqref{eq:sudamuon-final-rate-bigO}.

\medskip\noindent\textbf{3.\ $K^{-1/2}$ term.}
The $K^{-1/2}$ group in~\eqref{eq:sudamuon-final-rate-K-1-4-refined} consists of
\[
\frac{2\sqrt{r_{\max}}}{b_0}\,\mathbb{E}\|E_{\rm gt}^0\|_F
+ \frac{4\lambda}{1-\lambda}\,b_0\sigma\sqrt{r_{\max}},
\]
which we bound by a constant (independent of $K$, $N$, $\lambda$, $\gamma$, $\sigma$)
plus a constant multiple of $\sigma/(1-\lambda)$.
Absorbing the purely deterministic part into $C$ yields the third term
in~\eqref{eq:sudamuon-final-rate-bigO}.

\medskip\noindent\textbf{4.\ $K^{-1}$ term.}
The $K^{-1}$ group in~\eqref{eq:sudamuon-final-rate-K-1-4-refined} collects
initial transients. Factoring out the explicit $\sqrt{r_{\max}}/\sqrt N$ and
using again $1\le 1/(1-\gamma)$ and $\lambda/(1-\lambda)\le 1/(1-\lambda)$, the
whole coefficient is bounded by a constant multiple of
$1/\bigl(\sqrt{N}(1-\gamma)(1-\lambda)\bigr)$, giving the fourth term
in~\eqref{eq:sudamuon-final-rate-bigO}.

\medskip\noindent\textbf{5.\ $K^{-7/4}$ term.}
The last group in~\eqref{eq:sudamuon-final-rate-K-1-4-refined} is already of the form
$\text{const}\cdot \lambda/\bigl((1-\lambda)(1-\gamma)\bigr)\,K^{-7/4}$.
Using $\lambda\le 1$, we upper-bound this by a constant times
$K^{-7/4}/\bigl((1-\gamma)(1-\lambda)\bigr)$, which yields the fifth term
in~\eqref{eq:sudamuon-final-rate-bigO}.

Combining these five estimates and enlarging $C$ if necessary, we obtain
\eqref{eq:sudamuon-final-rate-bigO}. The equivalent big-$\mathcal{O}$ expression
follows immediately.
\end{proof}

With the master bound established, we now draw two structural consequences. We first instantiate it for concrete SUDA communication backbones in Subsection~\ref{sec:discussion} and then show in Subsection~\ref{sec:necessity} that tracking before polarization is not a modular detail but a structural requirement. Section~\ref{sec:linear-speedup} then turns to a complementary counterexample on federated versus fully decentralized linear speedup.

\subsection{Modular Axis: Backbone Choices}\label{sec:discussion}

The convergence theory implies the first structural conclusion of the paper:
within our decomposition, the choice of decentralized communication backbone is
the modular axis.
To compare concrete backbone choices, it suffices to inspect how
Corollary~\ref{cor:sudamuon-final-convergence-bigO} depends on the SUDA
contraction rate $\gamma$. For convenience, we recall the big-$\mathcal{O}$
master bound:
\begin{equation}
\label{eq:discussion-master}
\begin{split}
\frac{1}{K}\sum_{k=0}^{K-1} \mathbb{E}\big[\|\nabla f(\bar X^k)\|_*\big]
\;\le\; C\Bigg(&
\Bigl(1+\tfrac{\sigma}{\sqrt{N}}\Bigr)K^{-1/4}
+\frac{K^{-3/4}}{(1-\gamma)(1-\lambda)}
\\
&+\Bigl(1+\frac{\sigma}{1-\lambda}\Bigr)K^{-1/2}
+\frac{K^{-1}}{\sqrt{N}\,(1-\gamma)(1-\lambda)}
\\
&+\frac{K^{-7/4}}{(1-\gamma)(1-\lambda)}
\Bigg),
\end{split}
\end{equation}

The dominant term is the same for every SUDA instantiation and is completely
independent of the graph. Different communication backbones appear only through
the factor $(1-\gamma)^{-1}$ in the lower-order network corrections, where
$\gamma$ is the SUDA contraction rate.

For the SUDA instantiations of theoretical interest here, the substitution is simple:
\[
\gamma=\lambda^2 \quad \text{for ED/D$^2$ and EXTRA}, \qquad
\gamma=\lambda \quad \text{for ATC-GT}.
\]

Therefore
\[
\frac{1}{(1-\gamma)(1-\lambda)}
= \mathcal{O}\!\left(\frac{1}{(1-\lambda)^2}\right)
\]
in all three cases. At the big-$\mathcal{O}$ level, ED/D$^2$, EXTRA, and
ATC-GT therefore all scale as $\mathcal{O}((1-\lambda)^{-2})$ in the
$K^{-3/4}$, $K^{-1}$, and $K^{-7/4}$ network-dependent corrections.

The difference lies in constants rather than order. ED/D$^2$ and EXTRA use
$1-\gamma=(1-\lambda)(1+\lambda)$, whereas ATC-GT uses $1-\gamma=1-\lambda$.
When $\lambda$ is close to one, ATC-GT therefore carries a larger prefactor
in the subdominant network terms, even though the asymptotic order is the
same. This is precisely the modularity provided by the SUDA formulation:
among convergent SUDA instantiations, the leading stochastic term is graph-free,
and the communication backbone changes only the transient network corrections
through the substitution for $\gamma$.

In short, the theory does not predict different dominant rates for ED/D$^2$,
EXTRA, and ATC-GT. It predicts the same leading order and different transient
behavior. This completes the modular part of the structural decomposition. The
next two subsections turn to the complementary non-modular part: first an
internal boundary concerning the order between tracking and polarization, and
then an external boundary comparing decentralized and federated Muon.

\subsection{Internal Boundary: Tracking Before Polarization}\label{sec:necessity}

The previous subsection identified the communication backbone as the modular
axis of decentralized Muon design. We now turn to the first non-modular
boundary, which is internal to the decentralized architecture: whether Muon
polarizes a tracked quantity or a purely local one. A key design choice in \sudamuon{} is that the Muon direction is computed from a
\emph{network-aggregated / tracked} quantity $H_i^{k+1}$ rather than directly from a
local stochastic gradient (or its EMA). The following proposition shows that if we
remove this pre-polarization tracking step while keeping the rest of the algorithm
unchanged, then one cannot, in general, guarantee \emph{exact} convergence to
stationarity for heterogeneous objectives. For clarity we present this internal obstruction in
the deterministic/noiseless subcase $\sigma=0$; this is \emph{not} an additional
assumption, since the stochastic-oracle model in Assumption~\ref{assump:stoch}
explicitly allows $\sigma=0$ (deterministic gradients), and any convergence guarantee
under that model must hold in particular for this simplest admissible case. In this
setting, we construct smooth and lower-bounded problems on which the averaged iterate
$\bar X^k$ gets stuck at a \emph{non-stationary} point.

\begin{proposition}[A non-stationary fixed point for the no-tracking variant (general matrix case)]
\label{prop:no-tracking-nonstationary-fixed-point}
For notational simplicity, consider the deterministic/noiseless setting $\sigma=0$ and
the general matrix case $m,n\ge 1$.
Define the \emph{no-tracking variant} of \sudamuon{} by keeping
Algorithm~\ref{alg:suda-muon} unchanged except that the gradient-tracking update is
removed and replaced by
\begin{equation}
\label{eq:no-tracking-Hi}
H_i^{k+1} \gets M_i^{k+1}\qquad \text{for all }i\in[N],\ k\ge 0.
\end{equation}
Assume the communication/SUDA matrices satisfy Assumption~\ref{assump:network-suda},
and initialize as in Algorithm~\ref{alg:suda-muon} with $Y_i^0=0$ and $M_i^0=G_i^0$.

Then there exist $N=2$ and local objectives $f_1,f_2:\mathbb{R}^{m\times n}\to\mathbb{R}$
that are $L_*$-smooth in the sense of Assumption~\ref{assump:smooth} and are lower
bounded, such that for the consensus initialization $X_1^0=X_2^0=X^\dagger$, the
averaged iterate satisfies
\[\bar X^k \equiv X^\dagger\quad \text{for all }k\ge 0,\]
while the global objective $f(X)\triangleq\tfrac12\bigl(f_1(X)+f_2(X)\bigr)$ has
\[\nabla f(X^\dagger)\neq 0.
\]
Consequently,
\[
\frac{1}{K}\sum_{k=0}^{K-1}\bigl\|\nabla f(\bar X^k)\bigr\|_* \equiv \bigl\|\nabla f(X^\dagger)\bigr\|_*>0,
\]
and hence the no-tracking variant \eqref{eq:no-tracking-Hi} cannot guarantee exact
stationarity convergence (vanishing averaged gradient nuclear norm at $\bar X^k$)
over the class of smooth heterogeneous objectives.
\end{proposition}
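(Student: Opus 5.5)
The plan is to build an explicit rank-one ``matrix-logistic'' pair of local objectives. At \emph{every} point, the two local gradients are a positive and a negative multiple of the same fixed rank-one matrix. Then the local Muon directions are exactly opposite and cancel in the network average, while the averaged gradient does not vanish. Fix unit vectors $u\in\mathbb{R}^m$, $v\in\mathbb{R}^n$ and set $E:=uv^\top$, so that $\|E\|=\|E\|_*=1$ and $\operatorname{msgn}(cE)=\operatorname{sign}(c)\,E$ for every scalar $c\neq 0$. With $s(t):=1/(1+e^{-t})$, take
\[
f_1(X):=2\log\bigl(1+e^{\langle E,X\rangle}\bigr),\qquad f_2(X):=\log\bigl(1+e^{-\langle E,X\rangle}\bigr),
\]
so that $\nabla f_1(X)=2s(\langle E,X\rangle)E$ and $\nabla f_2(X)=-s(-\langle E,X\rangle)E$. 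Both functions are nonnegative, hence lower bounded. For smoothness, take $t=\langle E,X\rangle$ and $t'=\langle E,Y\rangle$. Then $\|\nabla f_1(X)-\nabla f_1(Y)\|_*=2|s(t)-s(t')|\le \tfrac12|t-t'|$. By the duality inequality~\eqref{eq:nuc-op-duality}, $|t-t'|\le \|E\|_*\|X-Y\|=\|X-Y\|$. Hence Assumption~\ref{assump:smooth} holds with $L_*=\tfrac12$, and the same argument applies to $f_2$. Choose $X^\dagger$ with $\langle E,X^\dagger\rangle=0$, e.g.\ $X^\dagger=0$. Then $\nabla f(X^\dagger)=\tfrac12\bigl(2\cdot\tfrac12 E-\tfrac12 E\bigr)=\tfrac14E\neq0$.

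Next I will show by induction on $k$ that, in the noiseless setting, $M_1^k=a_kE$ with $a_k>0$ and $M_2^k=-b_kE$ with $b_k>0$, whatever the local iterates $X_i^k$ are. The base case holds because $M_i^0=G_i^0=\nabla f_i(X_i^0)$ has the stated form. The inductive step holds because the EMA update forms a convex combination of such matrices, and $\nabla f_1$ and $\nabla f_2$ always lie in the open rays $\{cE:c>0\}$ and $\{cE:c<0\}$, respectively. Because $H_i^{k+1}=M_i^{k+1}$ by~\eqref{eq:no-tracking-Hi}, we obtain $S_1^{k+1}=E$ and $S_2^{k+1}=-E$ for all $k$, hence $\bar S^{k+1}=0$. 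This step uses no control over the disagreement $\tilde{\mathbf X}^k$; that is the point of choosing gradients whose sign pattern is globally constant.

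Finally, I will invoke Lemma~\ref{lem:sudamuon-average-dynamics}. Its proof uses only the SUDA $X$/$Y$ updates, Assumption~\ref{assump:network-suda}, and $\mathbf Y^0=0$, so it is unaffected by removing the tracking step and applies verbatim to the no-tracking variant. It gives $\bar X^{k+1}=\bar X^k-\alpha\bar S^{k+1}=\bar X^k$, so $\bar X^k\equiv \bar X^0=X^\dagger$ under the consensus initialization. The averaged nuclear-norm stationarity measure is therefore identically $\|\nabla f(X^\dagger)\|_*=\tfrac14>0$. The only delicate point, which I expect to be the main check, is confirming that the cancellation persists even though the individual $X_i^k$ drift apart. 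The invariance of the signs of $\nabla f_i$ along the whole space handles this, and I do not need $\bar X$-independent gradients. For general $m,n$, the rank-one $E$ makes the construction dimension-free, and $N=2$ with any admissible $W,A,B,C$ suffices.
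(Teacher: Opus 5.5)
Your proposal is correct and follows essentially the same route as the paper: it is the paper's matrix-logistic Example with $a=2$, $b=1$ (giving $L_*=\tfrac12$ and $\nabla f(0)=\tfrac14 E\neq 0$). It then uses the same induction showing that each EMA state stays on a fixed-sign ray of $E$ whatever the local iterates are, and reuses Lemma~\ref{lem:sudamuon-average-dynamics}, which is unaffected by removing the tracking step.
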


\begin{proof}
We first note that the averaging argument in the proof of
Lemma~\ref{lem:sudamuon-average-dynamics} uses only the $\mathbf X$- and $\mathbf Y$-updates
(and the doubly-stochasticity/polynomial properties of $A,B,C$), and does not rely on
the specific recursion defining $\mathbf H^{k+1}$. Hence the same argument applies to
the no-tracking variant as well and yields
\begin{equation}
\label{eq:pf-no-tracking-avg}
\bar X^{k+1}=\bar X^k-\alpha\,\bar S^{k+1},
\qquad
\bar S^{k+1}\triangleq \frac{1}{2}\sum_{i=1}^2 S_i^{k+1},\quad
S_i^{k+1}=\operatorname{msgn}(H_i^{k+1}).
\end{equation}
Under the no-tracking rule \eqref{eq:no-tracking-Hi}, we have $H_i^{k+1}=M_i^{k+1}$.
It remains to exhibit a smooth lower-bounded instance for which
$\bar S^{k+1}\equiv 0$ while $\nabla f(\bar X^0)\neq 0$.

\medskip\noindent\textbf{Construction of the counterexample.}
Choose $f_1,f_2$ as in Example~\ref{ex:matrix-logistic-counterexample} below and
initialize at consensus $\bar X^0=X^\dagger$.
For these functions, for every $X\in\mathbb{R}^{m\times n}$, the gradients satisfy
\begin{equation}
\label{eq:grad-aligned}
\nabla f_1(X)=c_1(X)\,U,\qquad \nabla f_2(X)=c_2(X)\,U,
\end{equation}
where $U\in\mathbb{R}^{m\times n}$ is a fixed rank-one matrix and the scalar
coefficients obey $c_1(X)>0$ and $c_2(X)<0$.
In particular, writing $U=uv^\top$ with $\|u\|_2=\|v\|_2=1$, for any scalar $\mu\neq 0$ we have
$\operatorname{msgn}(\mu U)=\mathrm{sign}(\mu)\,U$ because a reduced SVD of $\mu U$ is
$\mu U=u(|\mu|)(\mathrm{sign}(\mu)v)^\top$.
Therefore, $\operatorname{msgn}(\nabla f_1(X))=U$ and $\operatorname{msgn}(\nabla f_2(X))=-U$ for all $X$.

\medskip\noindent\textbf{Inductive cancellation argument.}
In the noiseless setting, $G_i^k=\nabla f_i(X_i^k)$.
Since $M_i^0=G_i^0$ and $M_i^{k+1}=\beta M_i^k+(1-\beta)G_i^k$, the alignment
\eqref{eq:grad-aligned} implies inductively that $M_1^{k}=\mu_1^{k}U$ and
$M_2^{k}=\mu_2^{k}U$ for scalars $\mu_1^{k}>0$ and $\mu_2^{k}<0$ for all $k$.
Therefore, under \eqref{eq:no-tracking-Hi}, we have
$S_1^{k+1}=\operatorname{msgn}(M_1^{k+1})=U$ and
$S_2^{k+1}=\operatorname{msgn}(M_2^{k+1})=-U$, and hence
$\bar S^{k+1}=(U-U)/2=0$ for all $k\ge 0$.
Plugging $\bar S^{k+1}=0$ into \eqref{eq:pf-no-tracking-avg} yields
$\bar X^{k+1}=\bar X^k$ for all $k$, so $\bar X^k\equiv \bar X^0=X^\dagger$.
Finally, Example~\ref{ex:matrix-logistic-counterexample} satisfies
$\nabla f(X^\dagger)\neq 0$, so the averaged stationarity measure stays bounded away
from zero.
\end{proof}

\begin{example}[A concrete smooth counterexample in the general matrix setting]
\label{ex:matrix-logistic-counterexample}
Let $N=2$.
Pick any unit vectors $u\in\mathbb{R}^m$ and $v\in\mathbb{R}^n$ with
$\|u\|_2=\|v\|_2=1$ and define the rank-one matrix
\[U\triangleq u v^\top.\]
Let $a>b>0$ be constants and define, for $X\in\mathbb{R}^{m\times n}$,
\[
 f_1(X)\triangleq a\,\log\bigl(1+e^{\langle U,X\rangle}\bigr),
 \qquad
 f_2(X)\triangleq b\,\log\bigl(1+e^{-\langle U,X\rangle}\bigr),
\]
where $\langle U,X\rangle\triangleq \mathrm{trace}(U^\top X)$.
Then $f_1$ and $f_2$ are lower bounded (both are nonnegative) and differentiable with
\[
\nabla f_1(X)= a\,\frac{1}{1+e^{-\langle U,X\rangle}}\,U,
\qquad
\nabla f_2(X)= -b\,\frac{1}{1+e^{\langle U,X\rangle}}\,U.
\]
In particular, $\nabla f_1(X)$ and $\nabla f_2(X)$ are always aligned with $U$ but have
opposite signs in their scalar coefficients; hence
$\operatorname{msgn}(\nabla f_1(X))=uv^\top$ and $\operatorname{msgn}(\nabla f_2(X))=-uv^\top$
for all $X$.

Moreover, the gradients are Lipschitz in the sense of Assumption~\ref{assump:smooth}.
Indeed, let $\varphi(t)\triangleq (1+e^{-t})^{-1}$ so that $\varphi'(t)=\varphi(t)(1-\varphi(t))\le 1/4$.
Since $\|U\|_* = 1$ (rank-one with unit singular value) and
$|\langle U, X-Y\rangle|\le \|U\|_*\,\|X-Y\|=\|X-Y\|$, we have
\[
\|\nabla f_1(X)-\nabla f_1(Y)\|_*
= a\,|\varphi(\langle U,X\rangle)-\varphi(\langle U,Y\rangle)|\,\|U\|_*
\le \frac{a}{4}\,|\langle U,X-Y\rangle|
\le \frac{a}{4}\,\|X-Y\|,
\]
and similarly $\|\nabla f_2(X)-\nabla f_2(Y)\|_*\le \frac{b}{4}\,\|X-Y\|$.
Thus Assumption~\ref{assump:smooth} holds with $L_*\triangleq \max\{a,b\}/4$.

Finally, with $X^\dagger\triangleq 0$, for the global objective
$f\triangleq \tfrac12(f_1+f_2)$ we have
\[
\nabla f(X^\dagger)=\tfrac12\bigl(\nabla f_1(0)+\nabla f_2(0)\bigr)
=\tfrac12\Bigl(\frac{a}{2}-\frac{b}{2}\Bigr)U
=\frac{a-b}{4}\,U\neq 0.
\]
Therefore, the hypotheses and conclusion of Proposition~\ref{prop:no-tracking-nonstationary-fixed-point}
are satisfied in the general matrix setting.
\end{example}

This structural result underscores the role of the gradient-tracking step in
Algorithm~\ref{alg:suda-muon}: because $\operatorname{msgn}$ is nonlinear,
replacing a tracked quantity by a purely local one can break exact convergence
under heterogeneous objectives. The next section asks a complementary question:
even when averaging is exact and topology is no obstacle, does decentralized
Muon retain the familiar federated linear-speedup effect?

\subsection{External Boundary: Federated vs.\ Decentralized Linear Speedup}
\label{sec:linear-speedup}

The previous subsection identified the first non-modular boundary, internal to
decentralized Muon itself: tracking-before-polarization is necessary for this natural no-tracking variant. We now turn
to a second, external structural boundary. Even if topology is made maximally
favorable, the fully decentralized update need not inherit the usual
linear-speedup intuition from server-based Muon. The reason is again the
ordering of operations. In a federated or server-based architecture, gradients
can be averaged first and polarized afterward. In a fully decentralized
architecture, each node polarizes a local noisy gradient and only then
communicates. The following toy example isolates this architectural difference
in the simplest possible setting.

We consider the vector case as a special instance of our matrix formulation,
namely $X^k=(x_1^k,x_2^k)^\top\in\mathbb{R}^{2\times 1}$. Let
\begin{equation}
\label{eq:speedup-f}
f(X)\triangleq \frac{1}{2}x_1^2,
\end{equation}

so that $\nabla f(X)=(x_1,0)^\top$. At iteration $k$, node $i\in[N]$ observes
the stochastic gradient
\begin{equation}
\label{eq:speedup-stochgrad}
G_i^k
= \nabla f(X^k) + \begin{bmatrix}0\\ \xi_i^k\end{bmatrix}
= \begin{bmatrix}x_1^k\\ \xi_i^k\end{bmatrix},
\qquad
\xi_i^k\in\{+\sigma,-\sigma\},
\end{equation}

where $\{\xi_i^k\}$ are i.i.d.\ across $i$ and $k$ with
$\E[\xi_i^k]=0$ and $\E[(\xi_i^k)^2]=\sigma^2$. The noise is transverse to the
signal: it acts only in the second coordinate.

Since $X^k\in\mathbb{R}^{2\times 1}$, the matrix-sign operator reduces to
Euclidean normalization,
\begin{equation}
\label{eq:speedup-msgn}
\operatorname{msgn}(V)=\frac{V}{\norm{V}_2},
\qquad
\operatorname{msgn}(0)=0.
\end{equation}

To give the decentralized method its best possible chance, we assume a complete
graph with exact averaging and synchronized iterates. The only distinction is
the order of averaging and polarization.
\begin{equation}
\label{eq:speedup-dec}
\textbf{DSGD-Muon:}\qquad
X^{k+1}=X^k-\alpha\cdot\frac{1}{N}\sum_{i=1}^N \operatorname{msgn}(G_i^k),
\end{equation}
\begin{equation}
\label{eq:speedup-fed}
\textbf{Centralized-Muon:}\qquad
X^{k+1}=X^k-\alpha\cdot\operatorname{msgn}\!\left(\frac{1}{N}\sum_{i=1}^N G_i^k\right).
\end{equation}

\begin{proposition}[Federated Muon can speed up, decentralized Muon cannot]
\label{prop:speedup-counterexample}

Consider \eqref{eq:speedup-f}--\eqref{eq:speedup-msgn} with any $\sigma>0$ and
stepsize $\alpha\in(0,\sigma)$.

\begin{enumerate}
\item 
\textbf{DSGD-Muon is insensitive to $N$.}
The first coordinate of \eqref{eq:speedup-dec} obeys the deterministic
recursion
\begin{equation}
\label{eq:speedup-recursion}
x_1^{k+1}
= x_1^k\left(1-\frac{\alpha}{\sqrt{(x_1^k)^2+\sigma^2}}\right).
\end{equation}

Hence
\[
|x_1^{k+1}| \ge \left(1-\frac{\alpha}{\sigma}\right)|x_1^k|,
\]
and reaching $|x_1^T|\le \varepsilon |x_1^0|$ requires
\[
T=\Omega\!\left(\frac{\sigma}{\alpha}\log\frac{1}{\varepsilon}\right),
\]
which does not improve with $N$.

\item 
\textbf{Centralized-Muon enjoys variance reduction.}
Let $\bar \xi^k\triangleq \frac{1}{N}\sum_{i=1}^N \xi_i^k$, so that
$\E[\bar \xi^k]=0$ and $\E[(\bar \xi^k)^2]=\sigma^2/N$.
Then the federated update \eqref{eq:speedup-fed} is driven by noise at scale
$\sigma/\sqrt{N}$.
In the noise-dominated regime $|x_1^k|\lesssim \sigma/\sqrt{N}$, the
corresponding iteration complexity scales as
\[
T=O\!\left(\frac{\sigma}{\alpha\sqrt{N}}\log\frac{1}{\varepsilon}\right),
\]
which is the usual linear-speedup behavior.

\end{enumerate}
\end{proposition}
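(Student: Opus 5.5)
The plan is to compute both updates explicitly in coordinates, using that $\operatorname{msgn}$ on $\mathbb{R}^{2\times 1}$ is Euclidean normalization, as in \eqref{eq:speedup-msgn}.

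\textbf{Part 1 (decentralized).} Each node sees $G_i^k=(x_1^k,\xi_i^k)^\top$ with $\xi_i^k\in\{\pm\sigma\}$. Hence $\|G_i^k\|_2=\sqrt{(x_1^k)^2+\sigma^2}$ deterministically, because $(\xi_i^k)^2=\sigma^2$ exactly. The first coordinate of $\operatorname{msgn}(G_i^k)$ is therefore $x_1^k/\sqrt{(x_1^k)^2+\sigma^2}$ for every $i$. It is identical across nodes and independent of the noise draw, so averaging over $i$ leaves it unchanged. This gives the recursion \eqref{eq:speedup-recursion} directly, with no dependence on $N$; the second coordinate never feeds back into $x_1$.

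For the lower bound, I will use $\sqrt{(x_1^k)^2+\sigma^2}\ge\sigma$. This gives $\alpha/\sqrt{\cdot}\le\alpha/\sigma<1$, so the factor $1-\alpha/\sqrt{\cdot}$ lies in $[1-\alpha/\sigma,1)$ and is positive. It follows that $|x_1^{k+1}|\ge(1-\alpha/\sigma)|x_1^k|$, and inductively $|x_1^T|\ge(1-\alpha/\sigma)^T|x_1^0|$. Requiring this to be at most $\varepsilon|x_1^0|$ forces $T\ge\log(1/\varepsilon)/(-\log(1-\alpha/\sigma))$. The sign analysis also shows $x_1^k$ never changes sign or hits zero, so $|x_1^0|>0$ is preserved.

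It remains to bound $-\log(1-t)$ by $O(t)$ for $t=\alpha/\sigma$. This holds cleanly when $t\le 1/2$, via $-\log(1-t)\le 2t$. For $t$ near $1$ the stated $\Omega(\sigma/\alpha)$ still holds trivially, since $\sigma/\alpha=O(1)$ and $T\ge 1$ whenever $\varepsilon<1$. I will state the bound as $T\ge \frac{\sigma}{2\alpha}\log\frac1\varepsilon$ in the regime $\alpha\le\sigma/2$ and absorb constants otherwise.

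\textbf{Part 2 (federated).} The averaged gradient is $(x_1^k,\bar\xi^k)^\top$, and $\mathbb{E}(\bar\xi^k)^2=\sigma^2/N$ follows from independence, as in \eqref{eq:variance-reduction-N}. The update is
\[
x_1^{k+1}=x_1^k\left(1-\alpha/\sqrt{(x_1^k)^2+(\bar\xi^k)^2}\right),
\]
so the contraction factor is governed by the effective noise scale $|\bar\xi^k|$, which concentrates at $\sigma/\sqrt N$. I will make this heuristic quantitative by noting the per-step contraction is $1-\alpha/\sqrt{(x_1^k)^2+(\bar\xi^k)^2}$. In the noise-dominated regime $|x_1^k|\lesssim\sigma/\sqrt N$, I will condition on the event $|\bar\xi^k|\le c\sigma/\sqrt N$, which has constant probability by Chebyshev or Paley--Zygmund. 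On that event the contraction factor is at most $1-\Theta(\alpha\sqrt N/\sigma)$, so $O\!\left(\frac{\sigma}{\alpha\sqrt N}\log\frac1\varepsilon\right)$ steps suffice in expectation or with high probability.

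The main obstacle is Part 2's rigor. The division by a random norm can overshoot when $\bar\xi^k$ is tiny, since the factor can be negative. The claim is also regime-restricted, so I will present it as an order-of-magnitude statement, mirroring the proposition's informal phrasing. A key constraint is that for $\alpha\sqrt N/\sigma$ small the factor stays in $(0,1)$, and I will assume that. Part 1 is fully rigorous and is the substantive content.
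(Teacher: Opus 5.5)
Your Part~1 is the paper's argument: $(\xi_i^k)^2=\sigma^2$ gives every node the same first coordinate $x_1^k/\sqrt{(x_1^k)^2+\sigma^2}$, and $\sqrt{(x_1^k)^2+\sigma^2}\ge\sigma$ gives the per-step bound. Your explicit $-\log(1-t)\le 2t$ step is more careful than the paper's sketch. Drop the fallback for $\alpha/\sigma$ near $1$, though. The bound ``$T\ge 1$'' loses the $\log(1/\varepsilon)$ factor, and the uniform $\Omega$ genuinely fails there. The factor decreases to $q:=1-\alpha/\sigma$, so $T\sim\log(1/\varepsilon)/\log(1/q)$, whose ratio to $(\sigma/\alpha)\log(1/\varepsilon)$ is about $(1-q)/\log(1/q)\to 0$ as $q\to0$. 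State the $\Omega$ for $\alpha\le c\sigma$ with $c<1$; the $N$-independence itself needs no restriction.

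\textbf{The genuine gap is in Part~2.} The paper only asserts the heuristic that averaging before normalization lowers the effective noise to $\sigma/\sqrt N$. Your quantitative version rests on the claim that, for $\alpha\sqrt N/\sigma$ small, the factor $1-\alpha/s_k$ stays in $(0,1)$, where $s_k:=\sqrt{(x_1^k)^2+(\bar\xi^k)^2}$. That claim is false. The factor lies in $(0,1)$ iff $s_k>\alpha$, and the ratio $\alpha\sqrt N/\sigma$ does not control this.
\begin{itemize}
\item With $\pm\sigma$ noise and $N$ even, $\mathbb{P}(\bar\xi^k=0)=\binom{N}{N/2}2^{-N}=\Theta(N^{-1/2})$.
\item On that event with $|x_1^k|<\alpha/2$, one gets $|x_1^{k+1}|=\alpha-|x_1^k|>\alpha/2$.
\end{itemize}
So the iterate cannot be kept below scale $\alpha$, and a $\log(1/\varepsilon)$ complexity can only hold for targets $\varepsilon|x_1^0|\gtrsim\alpha$. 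Separately, conditioning on $|\bar\xi^k|\le c\sigma/\sqrt N$ bounds the signed factor from above, not $|1-\alpha/s_k|$. It also says nothing about steps off the event, so by itself it yields no hitting-time bound.

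\textbf{A fix:} use a drift argument on $\log|x_1^k|$ instead of event conditioning. Fix a target $\theta:=\varepsilon|x_1^0|\ge 2\alpha$ and start in the regime $|x_1^0|\le\sigma/\sqrt N$. Since $s_k\ge|x_1^k|$, the factor lies in $(1/2,1)$ surely as long as $|x_1^k|>\theta$. Hence $|x_1^k|$ is nonincreasing and the regime persists. By $\log(1-u)\le -u$ and Jensen, $\mathbb{E}[s_k^{-1}\mid\mathcal F^k]\ge(\mathbb{E}[s_k^2\mid\mathcal F^k])^{-1/2}=((x_1^k)^2+\sigma^2/N)^{-1/2}$. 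This gives
\[
\mathbb{E}\bigl[\log|x_1^{k+1}|-\log|x_1^k|\,\big|\,\mathcal F^k\bigr]\le-\frac{\alpha}{\sqrt{(x_1^k)^2+\sigma^2/N}}\le-\frac{\alpha\sqrt N}{\sqrt2\,\sigma}.
\]
Apply optional stopping at $\tau:=\min\{k:|x_1^k|\le\theta\}$, noting that $|x_1^\tau|>\theta/2$ because the last factor exceeds $1/2$. This yields $\mathbb{E}[\tau]\le\frac{\sqrt2\,\sigma}{\alpha\sqrt N}\bigl(\log\frac1\varepsilon+\log 2\bigr)$. That is the rigorous content of the paper's heuristic. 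The floor condition $\theta\ge 2\alpha$ is absent from both the statement and the paper's sketch. It is exactly what your ``factor in $(0,1)$'' assumption was silently replacing.
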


\begin{proof}[Proof sketch]

For decentralized Muon, note that
\[
\norm{G_i^k}_2=\sqrt{(x_1^k)^2+(\xi_i^k)^2}=\sqrt{(x_1^k)^2+\sigma^2}.
\]
Therefore the first coordinate of the local Muon direction equals
$x_1^k/\sqrt{(x_1^k)^2+\sigma^2}$, which is independent of both the node index
$i$ and the sign of the noise. Averaging over $i$ yields
\eqref{eq:speedup-recursion}, and the lower bound follows from
$\sqrt{(x_1^k)^2+\sigma^2}\ge \sigma$.

For federated Muon, averaging occurs before normalization, so the second
coordinate becomes $\bar \xi^k$ with variance $\sigma^2/N$. The effective noise
level is therefore reduced from $\sigma$ to $\sigma/\sqrt{N}$, which gives the
stated complexity in the noise-dominated regime.

\end{proof}

This counterexample is useful precisely because topology plays no role in it:
the graph is complete and averaging is exact. The failure of linear speedup
comes solely from local polarization before averaging. In that sense, the
example isolates the external structural boundary of the paper: fully
decentralized Muon is not merely a communication-degraded version of federated
Muon, because local-polarize-then-average is structurally different from
average-then-polarize.

Figure~\ref{fig:speedup-counterexample} visualizes the same phenomenon for
$\sigma=50$ and $N\in\{1,8,64,256\}$. The dashed federated curves accelerate as
$N$ grows, while the solid decentralized curves nearly overlap across all
choices of $N$.

\begin{figure}[t]
\centering
\includegraphics[width=0.72\textwidth]{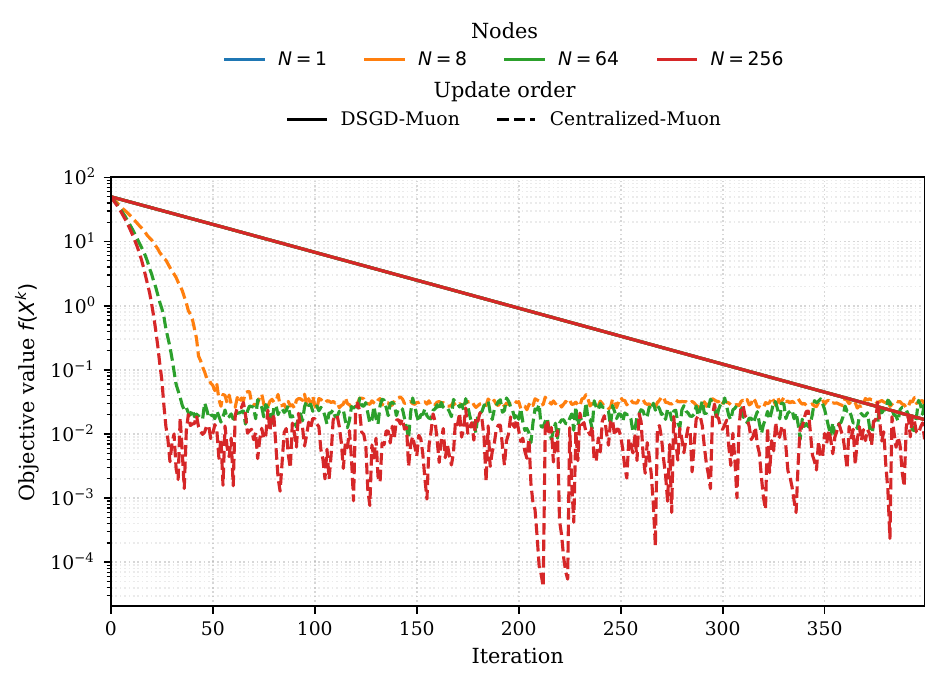}
\caption{Counterexample for linear speedup under transverse noise
($\sigma=50$). Dashed curves: federated Muon
\eqref{eq:speedup-fed} with server-side averaging before polarization. Solid
curves: decentralized Muon \eqref{eq:speedup-dec} with local polarization
before averaging. As $N$ increases, the federated curves accelerate clearly,
whereas the decentralized curves essentially overlap, indicating the absence of
linear speedup.}
\label{fig:speedup-counterexample}
\end{figure}

\section{Numerical Experiments}
\label{sec:numerical-experiments}

\subsection{Experimental Setup}
\label{subsec:exp-setup}

The experiments are designed to test the paper's structural claims rather than
only to rank methods. Their primary purpose is to probe the \emph{modular}
axis of our decomposition: once the Muon geometry is fixed, do different SUDA
backbones and topologies mainly affect transient behavior, and in which
regimes do those differences become visible? The experiments are not meant to
serve as direct ablations of the two non-modular boundaries established in
Subsections~\ref{sec:necessity} and~\ref{sec:linear-speedup}; those statements
are proved there by structural counterexamples. Instead, the empirical role of
this section is to show that decentralized Muon exhibits architecture- and
regime-dependent behavior that is consistent with that broader structural
picture. Throughout, we compare \textsc{SUDA--Muon-ED},
\textsc{SUDA--Muon-ATC-GT}, and \textsc{DeMuon}, and all quantitative
summaries are computed from pre-aggregated CSV logs produced by the training
scripts.

We use two benchmarks that stress different parts of the decentralized Muon
story: CIFAR-100 with strong cross-node heterogeneity, and GPT-2 fine-tuning
on Wikitext-2 with a much milder, near-IID partition.

For CIFAR-100, we use the standard $50{,}000/10{,}000$ train/test split, a
ResNet-18-style model with $100$ output classes, and a Dirichlet partition
with concentration $\alpha = 0.05$, which produces severe label skew across
workers. The main comparison uses $N=10$ nodes on a ring with mixing parameter
$\rho = 0.25$, batch size $512$ per node, constant stepsize $\eta = 0.03$,
EMA parameter $\beta = 0.9$, weight decay $5\times 10^{-4}$, and $25$
training epochs. Unless otherwise stated, each configuration is run with $3$
random seeds, and we report the performance of the averaged model.

We also include two CIFAR-100 extensions aimed at the communication story.
First, fixing \textsc{SUDA--Muon-ED}, we vary the topology and node count over
$N \in \{4,8,12\}$ to isolate how system scale changes the preferred graph.
Second, we run a larger and longer experiment with $N=20$ ring-connected nodes
for $100$ epochs using a slightly smaller constant stepsize $\eta = 0.01$.
This long-horizon setting is where steady-state communication effects should be
easiest to see.

For GPT-2/Wikitext-2, each node hosts the pretrained \texttt{gpt2} model. We
tokenize the corpus into length-$512$ sequences, use local batch size $16$ with base stepsize $\eta = 0.03$, and apply a
warmup-plus-cosine schedule. The main comparison uses a $15$-node ring, while
the ablation again varies topology and node count for
\textsc{SUDA--Muon-ED}. Each GPT configuration is run with $2$ seeds, and we
report validation perplexity at epoch $4$.

\subsection{Results and Analysis}
\label{subsec:results-analysis}

\textbf{CIFAR-100 main comparison (10-node ring).}
The short-horizon $10$-node ring benchmark is relatively balanced. After $25$ epochs, all three decentralized Muon variants finish within about $0.6$ accuracy points of one another, as shown in Table~\ref{tab:cifar-main}. \textsc{SUDA--Muon-ATC-GT} achieves the highest
mean final accuracy, while \textsc{DeMuon} is statistically very close and
\textsc{SUDA--Muon-ED} remains competitive despite the strongly non-IID split.
So the right reading of this table is not that one method wins decisively at a short budget; it is that the SUDA-based instantiations already match \textsc{DeMuon} in the default heterogeneous ring regime.

\begin{table}[t!]
    \centering
    \caption{CIFAR-100 top-1 test accuracy under a $10$-node ring topology
    with Dirichlet heterogeneity $\alpha = 0.05$. We report mean $\pm$
    population standard deviation over three seeds for the final accuracy at
    epoch $25$. Higher is better.}
    \label{tab:cifar-main}
    \begin{tabular}{lccccc}
        \toprule
        Method & \#Nodes & Topology & $\alpha$ & $\rho$ & Final acc (\%) \\
        \midrule
        \textsc{SUDA--Muon-ED}     & 10 & ring & 0.05 & 0.25 & $44.99 \pm 5.19$ \\
        \textsc{SUDA--Muon-ATC-GT} & 10 & ring & 0.05 & 0.25 & $\mathbf{45.56 \pm 3.18}$ \\
        \textsc{DeMuon}            & 10 & ring & 0.05 & 0.25 & $45.50 \pm 5.57$ \\
        \bottomrule
    \end{tabular}
\end{table}

The variance pattern is also informative. \textsc{SUDA--Muon-ATC-GT} has the
smallest spread across seeds, which is consistent with the intuition that
explicit correction of cross-node drift can stabilize training even when the
final means are close. In other words, the short-horizon experiment does not
yet separate the methods strongly by final accuracy, but it already suggests
that the SUDA coupling is not paying an optimization penalty for using the more
structured primal--dual backbone.

\textbf{CIFAR-100 ablation on topology and node count.}
Fixing the optimizer to \textsc{SUDA--Muon-ED}, Table~\ref{tab:cifar-ablation}
shows that the preferred topology changes with scale. At $N=4$, ring is best.
At $N=8$, star becomes strongest. At $N=12$, fully connected is best and the
line graph degrades the most. This is the clearest finite-time sign that graph
effects are real but problem dependent: the communication backbone is not a
mere implementation detail, yet neither is there a single topology that wins
uniformly.

\begin{table}[t!]
    \centering
    \caption{CIFAR-100 test accuracy for \textsc{SUDA--Muon-ED} under varying
    numbers of nodes and topologies. We report mean $\pm$ population standard
    deviation over three seeds of final and best test accuracy. Bold values
    indicate the best performance within each node-count group.}
    \label{tab:cifar-ablation}
    \begin{tabular}{cccc}
        \toprule
        \#Nodes & Topology & Final acc (\%) & Best acc (\%) \\
        \midrule
        4  & fully connected & $51.71 \pm 0.94$ & $53.71 \pm 0.57$ \\
        4  & line            & $51.76 \pm 1.04$ & $53.22 \pm 0.95$ \\
        4  & ring            & $\mathbf{53.23 \pm 0.30}$ & $\mathbf{55.27 \pm 1.53}$ \\
        4  & star            & $52.93 \pm 0.73$ & $53.21 \pm 0.33$ \\
        \midrule
        8  & fully connected & $47.69 \pm 0.55$ & $47.82 \pm 0.72$ \\
        8  & line            & $45.91 \pm 1.27$ & $46.07 \pm 1.03$ \\
        8  & ring            & $45.90 \pm 0.97$ & $47.04 \pm 0.69$ \\
        8  & star            & $\mathbf{50.19 \pm 1.60}$ & $\mathbf{50.94 \pm 0.98}$ \\
        \midrule
        12 & fully connected & $\mathbf{43.62 \pm 2.86}$ & $\mathbf{45.14 \pm 3.43}$ \\
        12 & line            & $37.26 \pm 5.32$ & $39.40 \pm 4.93$ \\
        12 & ring            & $41.72 \pm 3.73$ & $43.03 \pm 4.47$ \\
        12 & star            & $42.19 \pm 4.76$ & $42.19 \pm 4.76$ \\
        \bottomrule
    \end{tabular}
\end{table}

This ablation is useful for the paper's story because it isolates the modular
part of the design decomposition. Once the local Muon geometry is fixed,
changing only the graph still produces visibly different transient outcomes,
especially as $N$ grows. That is exactly the regime where the topology-dependent
correction terms should matter most in practice, even if the leading stochastic
term in the theory is topology separated.

\textbf{CIFAR-100 extended training on a 20-node ring.}
The strongest empirical evidence for the SUDA+Muon design appears in the larger
and longer non-IID run. Table~\ref{tab:cifar-long} and
Figure~\ref{fig:cifar-long} show that \textsc{SUDA--Muon-ED} clearly separates
from both alternatives after $100$ epochs on a $20$-node ring: it attains the
lowest final training loss and the highest final test accuracy, outperforming
\textsc{SUDA--Muon-ATC-GT} by more than $4$ points and \textsc{DeMuon} by
nearly $9$ points in final accuracy. This is the setting where the advantage
of the SUDA-style correction is no longer subtle.

\begin{table}[t!]
    \centering
    \caption{CIFAR-100 training loss and test accuracy after $100$ epochs on a
    $20$-node ring topology. We report mean $\pm$ population standard deviation
    over three seeds. Bold entries indicate the best values.}
    \label{tab:cifar-long}
    \resizebox{\textwidth}{!}{%
    \begin{tabular}{cccccc}
        \toprule
        Method & \#Nodes & Epochs & Final train loss & Final test acc (\%) & Seeds \\
        \midrule
        \textsc{SUDA--Muon-ED}     & 20 & 100 & $\mathbf{0.281 \pm 0.018}$ & $\mathbf{52.89 \pm 0.82}$ & 3 \\
        \textsc{SUDA--Muon-ATC-GT} & 20 & 100 & $0.329 \pm 0.008$ & $48.58 \pm 0.73$ & 3 \\
        \textsc{DeMuon}            & 20 & 100 & $0.386 \pm 0.009$ & $43.94 \pm 1.68$ & 3 \\
        \bottomrule
    \end{tabular}}
\end{table}

\begin{figure}[t!]
    \centering
    \includegraphics[width=\textwidth]{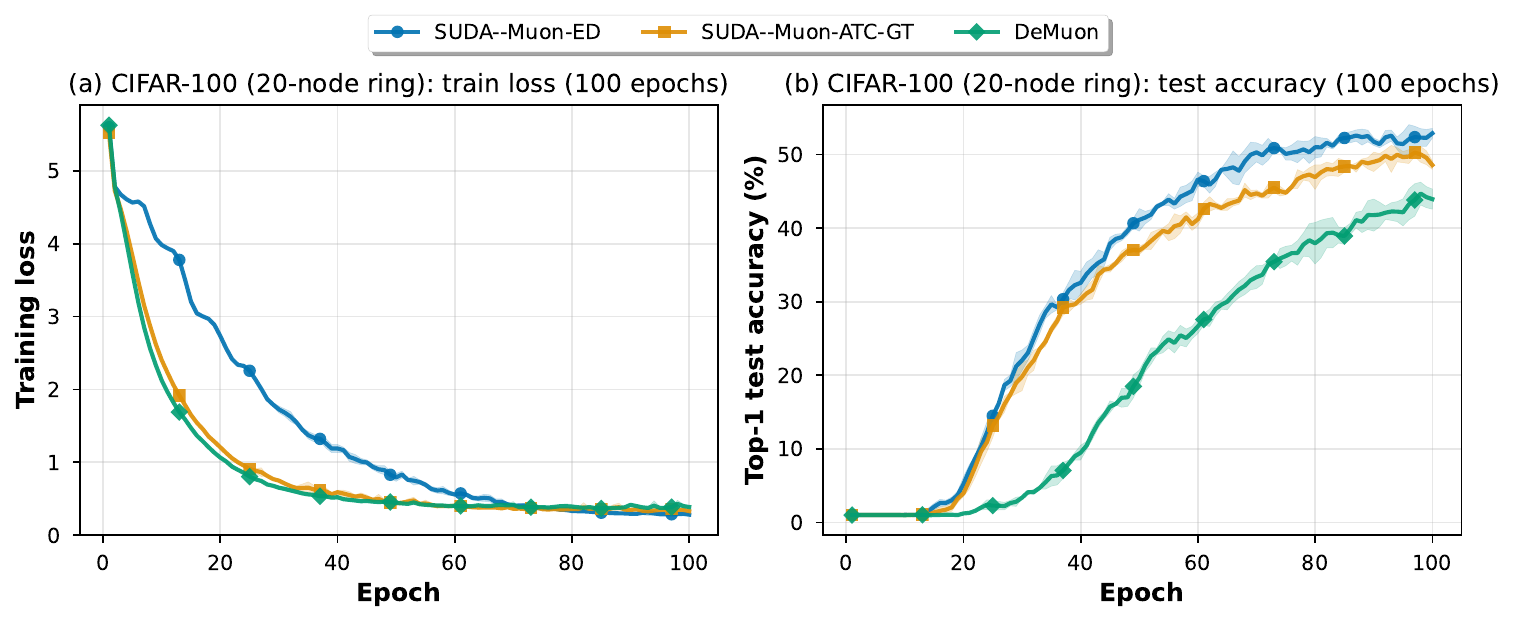}
    \caption{CIFAR-100 performance of the three decentralized Muon variants on
    a $20$-node ring over $100$ epochs. \textsc{SUDA--Muon-ED} converges faster
    and finishes higher than \textsc{SUDA--Muon-ATC-GT} and \textsc{DeMuon}.
    Shaded regions, where visible, denote one standard deviation over three
    seeds.}
    \label{fig:cifar-long}
\end{figure}

The trajectory plot sharpens the comparison. During the first half of training,
both SUDA-based variants already stay ahead of \textsc{DeMuon}; later on,
\textsc{SUDA--Muon-ED} continues improving and cleanly overtakes
\textsc{SUDA--Muon-ATC-GT}. So the most defensible claim is tied to this specific regime: in the demanding $20$-node, $100$-epoch non-IID setting, both SUDA-based variants outperform \textsc{DeMuon}, and among the instantiations tested here \textsc{SUDA--Muon-ED} is the strongest.

\textbf{GPT-2 / Wikitext-2 main comparison (15-node ring).}
The language-modeling picture is intentionally different. Under the $15$-node
ring, the three decentralized Muon variants are almost indistinguishable after
$4$ epochs, with validation perplexities clustered between $18.1396$ and
$18.1406$ in Table~\ref{tab:gpt-main}. We therefore do not read this benchmark
as evidence that one decentralized Muon mechanism dominates the others. The
useful message is instead that the SUDA-based variants remain fully competitive
in the benign near-IID regime; adopting the SUDA backbone does not sacrifice
performance when the communication problem is mild.

\begin{table}[t!]
    \centering
    \caption{Validation perplexity on Wikitext-2 for GPT-2 under a $15$-node
    ring topology after four training epochs. Values are means over two seeds.
    Lower is better; bold indicates the lowest perplexity.}
    \label{tab:gpt-main}
    \begin{tabular}{lcccc}
        \toprule
        Method & \#Nodes & Topology & Seeds & Eval ppl (4 epochs) \\
        \midrule
        \textsc{SUDA--Muon-ED}     & 15 & ring & 2 & $\mathbf{18.1396}$ \\
        \textsc{SUDA--Muon-ATC-GT} & 15 & ring & 2 & $18.1406$ \\
        \textsc{DeMuon}            & 15 & ring & 2 & $18.1401$ \\
        \bottomrule
    \end{tabular}
\end{table}

\begin{figure}[t!]
    \centering
    \begin{subfigure}[t]{0.24\textwidth}
        \centering
        \includegraphics[width=\textwidth]{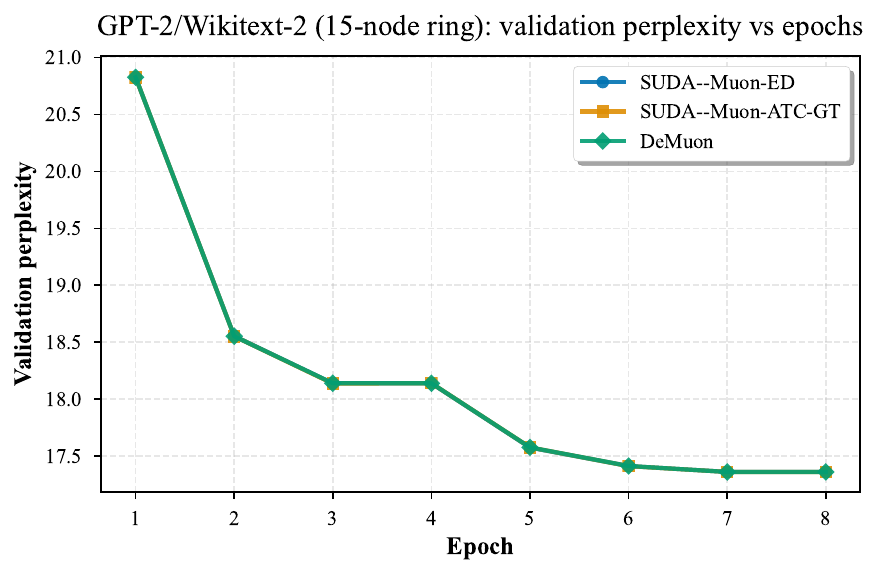}
        \caption{Main $15$-node ring run.}
        \label{fig:gpt-main}
    \end{subfigure}\hfill
    \begin{subfigure}[t]{0.74\textwidth}
        \centering
        \includegraphics[width=\textwidth]{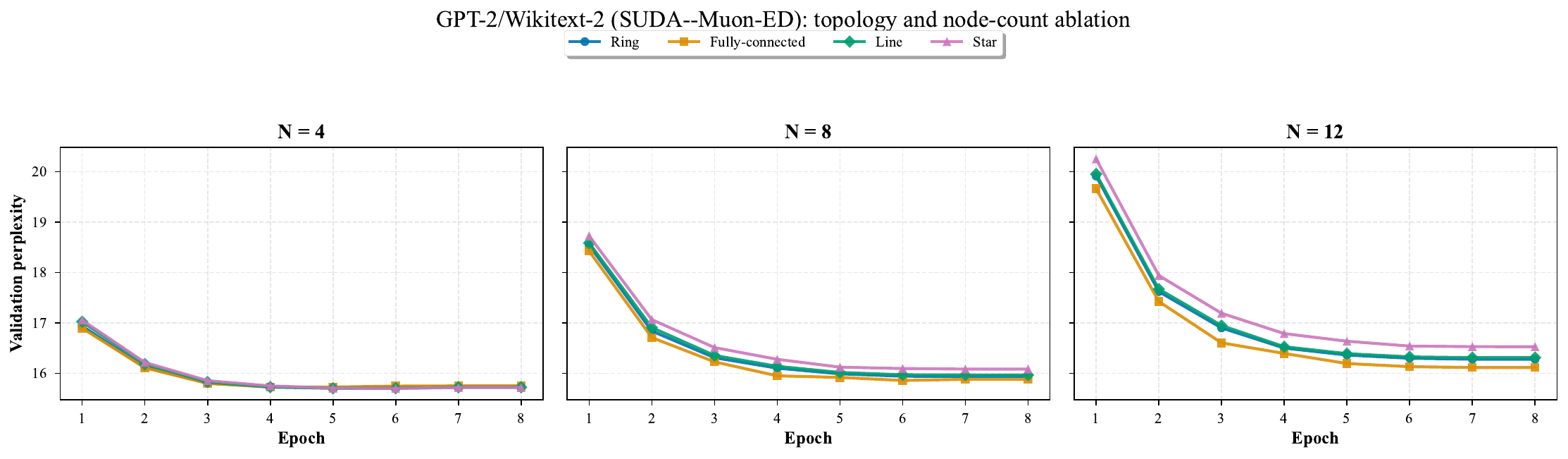}
        \caption{Ablation over node count and topology.}
        \label{fig:gpt-ablation-fig}
    \end{subfigure}
    \caption{Validation perplexity of decentralized Muon variants on GPT-2 /
    Wikitext-2. The left panel compares \textsc{SUDA--Muon-ED},
    \textsc{SUDA--Muon-ATC-GT}, and \textsc{DeMuon} under the main $15$-node
    ring configuration, while the right panel reports the
    \textsc{SUDA--Muon-ED} topology ablation. Shaded regions denote one
    standard deviation over two seeds.}
    \label{fig:gpt}
\end{figure}

\textbf{GPT-2 / Wikitext-2 ablation on topology and node count.}
Fixing the optimizer to \textsc{SUDA--Muon-ED}, Table~\ref{tab:gpt-ablation}
shows that all topologies remain close at $N=4$, while fully connected becomes
best at $N=8$ and $N=12$. The absolute spread is still small, so this is not a
large-effect benchmark, but it reinforces the same practical lesson as on
CIFAR-100: the graph interacts with task statistics and scale, and the
preferred topology can shift as those conditions change.

\begin{table}[t!]
    \centering
    \caption{Validation perplexity of \textsc{SUDA--Muon-ED} on Wikitext-2
    after four training epochs under varying numbers of nodes and topologies.
    Values are means over the available seeds. Lower is better; bold indicates
    the best performance within each node-count group.}
    \label{tab:gpt-ablation}
    \begin{tabular}{ccc}
        \toprule
        \#Nodes & Topology & Eval ppl (4 epochs) \\
        \midrule
        4  & fully connected & $15.75$ \\
        4  & line            & $15.73$ \\
        4  & ring            & $15.74$ \\
        4  & star            & $\mathbf{15.72}$ \\
        \midrule
        8  & fully connected & $\mathbf{15.88}$ \\
        8  & line            & $15.97$ \\
        8  & ring            & $15.94$ \\
        8  & star            & $16.09$ \\
        \midrule
        12 & fully connected & $\mathbf{16.12}$ \\
        12 & line            & $16.31$ \\
        12 & ring            & $16.28$ \\
        12 & star            & $16.53$ \\
        \bottomrule
    \end{tabular}
\end{table}

Taken together, these experiments mainly support the \emph{modular} axis of
the paper's story. When the Muon geometry is held fixed and only the backbone
or topology is varied, the differences are real but strongly regime dependent:
they are small in short-horizon or near-IID settings and much larger in the
long-horizon, strongly heterogeneous CIFAR-100 run. This is consistent with the
theoretical picture that backbone choices alter transient communication effects
and lower-order corrections more than the dominant stochastic term. The
two non-modular boundaries about tracking-before-polarization and
federated-versus-decentralized ordering should therefore be read as structural
limits established theoretically, with the experiments providing practical
context rather than direct empirical proofs.

\section{Conclusion}\label{sec:conclusion}

This paper studies one specific obstruction in fully decentralized Muon: the
polarization step is nonlinear, so it does not interact with gossip averaging
in the same way as an ordinary gradient step. Our main contribution is not
merely a new decentralized Muon variant, but a structural decomposition of
decentralized Muon design. In this decomposition, Muon supplies the matrix
geometry, while a unified primal--dual backbone handles communication
correction. \sudamuon{} realizes this decomposition through a SUDA-style
template that lets several decentralized correction mechanisms be compared
within one architecture.

The main theoretical message is organized around one modular axis and two
non-modular boundaries. The modular axis is the communication backbone: the
dominant stochastic term remains topology-free, while the graph enters only
through faster-decaying corrections, so changing ED/D$^2$, EXTRA, or ATC-GT
affects transient network corrections but not the leading stochastic order. The
first non-modular boundary is internal: tracking before polarization is not
optional in general. Without it, one can construct smooth heterogeneous
problems on which the averaged iterate stays at a non-stationary point.

A second non-modular boundary is external and appears when comparing federated and fully
decentralized Muon. In our linear-speedup counterexample, server-side averaging
before polarization recovers the usual $\sigma/\sqrt{N}$ variance reduction,
while fully decentralized local polarization before averaging does not, even on
a complete graph with exact averaging. Thus fully decentralized Muon is not
merely a communication-degraded version of federated Muon; it has its own
algorithmic structure.

Our experiments on CIFAR-100 and GPT-2/Wikitext-2 primarily support the
modular axis of this picture. Short-horizon and near-IID runs make the decentralized Muon variants look similar, but in the $20$-node, $100$-epoch non-IID CIFAR-100 setting both SUDA-based variants outperform \textsc{DeMuon}, with \textsc{SUDA--Muon-ED} performing best. When the partition is near IID, the
different SUDA--Muon instances behave much more similarly and the best
topology can shift. This regime dependence is consistent with the view that
backbone choices mainly change transient communication behavior, whereas the
two non-modular statements of the paper are established by the counterexamples
rather than by benchmark ranking alone.

Natural next steps include communication-efficient variants with compression,
sharper rates under additional structure such as PL-type conditions, online
tuning of the stepsize and EMA parameters, and larger-scale experiments that
test whether the same topology effects persist for bigger models and graphs.

\section*{Acknowledgments}
We gratefully acknowledge ReasFlow \citep{reasflowteam2026reasflow}, a reasoning-centric scientific discovery assistant, for its substantial contributions to the preparation of this paper. A significant portion of the work, including the literature review, mathematical proofs, numerical experiments, and the initial manuscript draft, was generated automatically with the assistance of ReasFlow. The authors’ contributions lay primarily in identifying the research problem, proposing the high-level algorithmic design, articulating the key ideas underlying the mathematical proofs, specifying the methodology and requirements for the numerical experiments, and polishing the manuscript to meet the standards required for submission. In particular, the authors devoted considerable effort to verifying the correctness of the mathematical proofs and refining the resulting arguments.

\bibliographystyle{plainnat}
\bibliography{reference}

\end{document}